\documentclass[10pt]{amsart}

\usepackage{amsfonts,amsthm,amssymb,amsmath,amscd}
\usepackage{mathrsfs}
\usepackage[margin=1in]{geometry}
\usepackage{hyperref}
\hypersetup{colorlinks=true,citecolor=blue,urlcolor=black,linkbordercolor={1 0 0}}
\usepackage{mathtools}
\usepackage{enumitem}

\newtheorem{theorem}{Theorem}[section]
\newtheorem{proposition}[theorem]{Proposition}
\newtheorem{lemma}[theorem]{Lemma}
\newtheorem{corollary}[theorem]{Corollary}

\theoremstyle{definition}
\newtheorem{definition}[theorem]{Definition}
\newtheorem{example}[theorem]{Example}

\theoremstyle{remark}
\newtheorem*{remark}{Remark}

\DeclareMathOperator{\cont}{cont}
\DeclareMathOperator{\sqf}{sqf}
\DeclareMathOperator{\disc}{disc}
\DeclareMathOperator{\SL}{SL}

\DeclareMathOperator{\Sym}{Sym}

\newcommand{\BZ}{\mathbb{Z}}
\newcommand{\BQ}{\mathbb{Q}}
\newcommand{\BC}{\mathbb{C}}

\newcommand{\BF}{\mathbb{F}}

\newcommand{\defeq}{\vcentcolon=}

\allowdisplaybreaks

\title[On the Quartic Invariant of Odd Degree Binary Forms]{On the Quartic Invariant of Odd Degree Binary Forms}

\author{Ashvin A.~Swaminathan \vspace*{-0.3in}}

\date{\today}

\AtEndDocument{\bigskip{\footnotesize%
  \noindent\textsc{Department of Mathematics, Harvard University, \mbox{Cambridge, MA 02138}} \par
  \textit{E-mail address}, Ashvin A.~Swaminathan: \texttt{ashvins@math.harvard.edu}
}}

\subjclass[2020]{13A50, 11C08 (primary), and 11B65, 14L24 (secondary)}

\begin{document}

\maketitle

%======================================================================
% ABSTRACT
%======================================================================
\begin{abstract}
We determine the squarefree part of the scalar factor that arises
when the quartic invariant of the generic binary form $F$ of odd
degree $2n+1$ is expressed as the discriminant of the unique
quadratic covariant $(F,F)_{2n}$.  This squarefree part is exactly~$p$
when $n+2$ is a power of an odd prime~$p$, and $1$ otherwise.
Equivalently, for each prime~$p$: $v_2(S(n))$ is always even, and for
odd~$p$, $v_p(S(n))$ is odd if and only if $n+2$ is a power of~$p$.  This
generalizes the classical identity $\disc(H(F))=-3\cdot\disc(F)$ for
binary cubics, which dates back to the work of Cayley and Sylvester in
the 1850s.  The proof, which involves substantial explicit coefficient
analysis and $p$-adic deformation arguments, was developed using an
AI-assisted research workflow\textup{:} the author's earlier partial
attempts were completed through systematic collaboration with Claude Code
(Anthropic) and Codex (OpenAI), and key arithmetic lemmas were formally verified in Lean~4 using
Aristotle \cite{Aristotle2025} (Harmonic).  We describe this workflow in detail as a case study in
AI-assisted mathematical research.  We also discuss representation-theoretic,
geometric, and arithmetic interpretations of the quadratic covariant.
\end{abstract}

%======================================================================
% SECTION 1: INTRODUCTION
%======================================================================
\section{Introduction}\label{sec:intro}

A classical problem in invariant theory is to compute the scalar
factors that arise when a given invariant or covariant of a binary form
is constructed by two different methods.  In this paper we determine the
squarefree part of the scalar factor relating the quartic invariant
of the generic odd-degree binary form to the discriminant of its
unique quadratic covariant.  The proof was completed using
an AI-assisted workflow built around Claude Code (Anthropic), Codex (OpenAI), and
Aristotle \cite{Aristotle2025} (Harmonic), and we describe this workflow in detail as a case
study in AI-assisted mathematical research.

\subsection{The classical case of binary cubics}
Let $F(x,z) = f_0x^3 + f_1x^2z + f_2 x z^2 + f_3 z^3 \in \BC[x,z]$ be a
binary cubic form.  The \emph{Hessian} of~$F$ is the binary quadratic form
\begin{equation}\label{eq:defhess}
H(F) \defeq \frac{1}{4} \cdot \det \begin{pmatrix}
\dfrac{\partial^2 F}{\partial x^2} & \dfrac{\partial^2 F}{\partial x \,\partial z} \\[6pt]
\dfrac{\partial^2 F}{\partial z \,\partial x} & \dfrac{\partial^2 F}{\partial z^2}
\end{pmatrix}
= (3f_0f_2 - f_1^2)\,x^2 + (9f_0f_3 - f_1f_2)\,xz + (3f_1f_3 - f_2^2)\,z^2.
\end{equation}
The form $H(F)$ is covariant under the action of $\SL_2(\BC)$ that sends
$F(x,z) \mapsto F((x,z) \cdot \gamma)$ for each $\gamma \in \SL_2(\BC)$;
moreover, it is the unique covariant of $F$ with \emph{degree}~$2$ (in the $f_i$)
and \emph{order}~$2$ (in $x,z$).  The Hessian covariant was introduced by
Cayley and studied extensively by Sylvester in the 1850s; we refer
to~\cite{Elliott1895} and~\cite{GraceYoung1903} for detailed classical
treatments, and to~\cite{Olver1999} for a modern account.

Since the discriminant of $H(F)$ is an
$\SL_2(\BC)$-invariant of degree~$4$ and order~$0$, it must be a scalar
multiple of $\disc(F)$.  Indeed, a direct computation yields
\begin{equation}\label{eq:dischess}
\disc(H(F)) = -3 \cdot \disc(F).
\end{equation}
The specific scalar factor $-3$ for binary cubics appears in the classical
texts of Elliott~\cite{Elliott1895} and Grace--Young~\cite{GraceYoung1903}.
More generally, a recurring theme in classical invariant theory is the
determination of the precise scalar factors relating different
constructions of the same covariant.  For instance,
McMahon~\cite{McMahon1889} computed the scalar factor relating two
expressions for the Hessian of a binary form of degree~$n$.

The factor $-3$ appearing in~\eqref{eq:dischess} has deep number-theoretic
significance.  The Scholz reflection principle~\cite{Scholz1932} states that
if $D<0$ is a fundamental discriminant, then the $3$-rank of the class group
of $\BQ(\sqrt{D})$ is either equal to or one larger than that of
$\BQ(\sqrt{-3D})$.  The identity~\eqref{eq:dischess} underlies the
relationship between cubic fields and their quadratic resolvents: under
the Delone--Faddeev correspondence, the Hessian maps a binary cubic of
discriminant~$D$ to a quadratic form of discriminant~$-3D$, and this
mechanism is at the heart of the Davenport--Heilbronn density theorems
for cubic fields~\cite{DavenportHeilbronn1971} and their modern
extensions~\cite{Bhargava2004b,BhargavaShankarTsimerman2013,BhargavaVarma2016}.

\subsection{The generalization to odd degree}

The objective of this paper is to determine the higher-degree
analogue of the scalar factor $-3$ in~\eqref{eq:dischess} for binary
forms of any given odd degree $2n+1$.  The exact scalar is not itself a
single integer but rather a product of factorial expressions; what we
compute is its squarefree part, which turns out to be governed by the
prime-power structure of~$n+2$.  Fix $n \ge 1$ and let
\begin{equation}\label{eq:genericF}
F(x,z) = \sum_{j=0}^{2n+1} f_j \, x^{2n+1-j} z^j
\end{equation}
be the generic binary form of degree $2n+1$ with indeterminate coefficients
$f_0,\dots,f_{2n+1}$ over~$\BZ$.  The \emph{quadratic covariant}
$Q_n = (F,F)_{2n}$ is the $(2n)$-th transvectant of $F$ with itself (see
\S\ref{sec:prelim} for the precise normalization).  Transvectants, also
known as \emph{\"Uberschiebungen} in the German literature, provide the
fundamental algebraic mechanism for producing covariants from binary forms;
we refer to Olver~\cite[Chapter~5]{Olver1999} and
Kung--Rota~\cite{KungRota1984} for comprehensive modern treatments,
and to Elliott~\cite{Elliott1895} and Grace--Young~\cite{GraceYoung1903}
for the classical development.  The form
$Q_n = A_n x^2 + B_n xz + C_n z^2$ is a binary quadratic, unique up to
scalar among degree-$2$, order-$2$ covariants of~$F$
(see \S\ref{subsec:rep-theory}).

The discriminant
\begin{equation}\label{eq:Delta-def}
\Delta_n \defeq B_n^2 - 4A_nC_n \in \BZ[f_0,\dots,f_{2n+1}]
\end{equation}
is the quartic invariant of~$F$.  We define the \emph{content}
$S(n) = \cont(\Delta_n)$ to be the greatest common divisor of all integer
coefficients of~$\Delta_n$, and write $\sqf(m)$ for the squarefree part of a
nonzero integer~$m$.

\subsection{Main result}

For an integer $m \ge 1$, define
\begin{equation}\label{eq:a-def}
a(m) \defeq \begin{cases}
p, & \text{if } m = p^k \text{ for some odd prime } p \text{ and } k \ge 1,\\
1, & \text{otherwise.}
\end{cases}
\end{equation}

\begin{theorem}\label{thm:main}
For every integer $n \ge 1$ and every prime~$p$, the parity
of~$v_p(S(n))$ is determined as follows:
\begin{enumerate}[label=\textup{(\roman*)}]
\item $v_2(S(n))$ is always even.
\item For every odd prime~$p$, $v_p(S(n))$ is odd if and only
if $n+2$ is a power of~$p$.
\end{enumerate}
Equivalently,
\[
\sqf(S(n)) = a(n+2),
\]
where $\sqf$ denotes the squarefree part.
\end{theorem}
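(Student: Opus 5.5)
The plan is to make the covariant completely explicit, separate $S(n)$ into a perfect-square part and a residual gcd, and identify that residual gcd with a classical gcd of binomial coefficients. The target fact is
\[
\gcd_{0<k<m}\binom{m}{k}=\begin{cases}p,& m=p^k,\\ 1,&\text{otherwise,}\end{cases}
\]
which holds for every $m\ge 2$ and every prime $p$, including $p=2$. With $m=n+2$ this is exactly $a(n+2)$ for odd $p$. For $p=2$ the fact alone does not give the answer, so the even-prime case needs its own argument (see below).

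First, using the normalization of $(F,F)_{2n}$ from \S\ref{sec:prelim}, I will write
\[
A_n=\sum_{i+j=2n}\alpha_{ij}f_if_j,\qquad B_n=\sum_{i+j=2n+1}\beta_{ij}f_if_j,\qquad C_n=\sum_{i+j=2n+2}\gamma_{ij}f_if_j .
\]
The coefficients $\alpha,\beta,\gamma$ are signed products of factorials of the shape $(-1)^i\, i!\,(2n+1-i)!\,j!\,(2n+1-j)!$ times transvectant binomials.

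Next I will pull out the obvious common factor $c_n$, a product of factorials, from all three coefficients. This gives $\Delta_n=c_n^2\Delta_n'$, so $v_p(S(n))\equiv v_p(\cont\Delta_n')\pmod 2$ for every $p$. The residual coefficients should become simple expressions in binomial coefficients of $2n+1$ and $n+2$.

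Then I will compute the coefficient of each quartic monomial $f_if_jf_kf_l$ in $\Delta_n'=B'^2-4A'C'$. Since $\cont\Delta_n'$ is the gcd of these coefficients, the task is to find, for each prime $p$, a monomial attaining the minimal $p$-adic valuation.

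I would try the monomials that are "extremal" and involve few terms:
\begin{itemize}
\item $f_0^2f_{2n+1}^2$, and $f_0f_{2n+1}f_nf_{n+1}$;
\item the family $f_0 f_{2n+1} f_k f_{2n+1-k}$ as $k$ varies.
\end{itemize}
For this family I expect the coefficient, up to units and squares, to be a constant multiple of $\binom{n+2}{k}$ or a closely related binomial. The gcd over $k$ then produces $a(n+2)$ via the fact above, using Kummer's theorem to compute the valuations.

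For odd $p$ the argument has two halves:
\begin{itemize}
\item Upper bound: exhibit a monomial with $v_p=v_p(\text{square part})$, or with that value plus one when $n+2=p^k$.
\item Lower bound: show every coefficient is divisible by that power. For this I plan to reduce $\Delta_n'$ modulo $p$ (resp.\ $p^2$). When $n+2=p^k$, I would use a $p$-adic deformation argument, evaluating $\Delta'_n$ at integer forms close to a degenerate form, to show that $p$ divides $\Delta_n'$ identically.
\end{itemize}

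For $p=2$ the factor $4A'C'$ and the square $B'^2$ interact. I will show that either $v_2(\cont\Delta_n')=0$ or $\Delta_n'\equiv B'^2 \pmod{4}$ with $B'$ of content $2$, giving an even valuation in both cases.

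The main obstacle is the lower bound in the prime-power case. Knowing the minimum of $v_p$ over a convenient family of monomials is easy, but certifying that no other monomial in $B'^2-4A'C'$ has smaller valuation requires control of the full convolution sums $\sum\beta\beta-4\sum\alpha\gamma$. My first attempt there will be to rewrite those sums as a single hypergeometric-type binomial sum via Vandermonde identities, and then apply Lucas's theorem modulo $p$.
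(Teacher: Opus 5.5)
Your plan has genuine gaps, starting with its central mechanism. The family you propose does not see $\binom{n+2}{k}$. For $2\le k\le 2n-1$, no two of the indices $0,\,2n+1,\,k,\,2n+1-k$ sum to $2n$, so $A_nC_n$ cannot contribute. The coefficient of $f_0f_{2n+1}f_kf_{2n+1-k}$ in $\Delta_n$ is therefore just $2b_0b_k$. By Corollary~\ref{cor:b-factored} this is, up to sign and squares, $2M(M-2k)\binom{M}{k}$ with $M=2n+1$. More fundamentally, ``up to units and squares'' does not pass through a gcd. The square cofactors vary with $k$, so $\min_k v_p$ may be attained at an index whose non-square part has the wrong parity. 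Hence no gcd identity for binomial coefficients can by itself determine the parity of $v_p(S(n))$. The paper makes exactly this point in Remark~\ref{rem:Cnr-parity} and uses Lemma~\ref{lem:bingcd} only as motivation. What you need is a \emph{single} coefficient attaining the minimal valuation with controlled parity, together with a lower bound valid for \emph{every} coefficient. The paper takes the square monomial $(f_{k_0}f_{N+1-k_0})^2$, whose coefficient in $\Delta_n$ is exactly $b_{k_0}^2$ when $k_0\notin\{n,n+1\}$. The real work is then twofold. First, one must prove that $\min_k v_p(b_k)$ is attained at such an off-centre $k_0$ whenever $n+2$ is not a power of $p$; the paper uses valuation shifts and complete residue systems for $p\ge 5$, and a ternary digit-sum recursion for $p=3$. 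Second, one must show via absorption identities that every coefficient of $A_n$ and $C_n$ has valuation at least this minimum. Your outline supplies neither step.

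Your prime-power lower bound is set up backwards. Because $S(n)$ divides every value of $\Delta_n$, evaluating at forms near a degenerate form can only bound $v_p(S(n))$ from \emph{above}; it cannot show that $p$ divides $\Delta_n'$ identically. The missing idea is a coefficientwise congruence. Divide $Q_n$ by its \emph{exact} $p$-content $p^{e_p}$; a uniform factorial factor will not do, since if you remove less then $Q_n'\equiv 0\pmod p$ and no structure is visible. After this division only the summands with $i\in\{n-1,n,n+1\}$ survive modulo $p$. Then $\alpha_n=\beta_n$ together with $n+1\equiv -1\pmod p$ gives $Q_n^{\#}\equiv\lambda(f_nx-f_{n+1}z)^2\pmod p$, whose discriminant vanishes identically. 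The deformation is used in the opposite direction: the four-point specialization at distance $p^{k-1}$ shows the valuation is exactly $2e_p+1$. Your $p=2$ dichotomy also does not conclude. $\Delta_n'\equiv B'^2\pmod 4$ with $\cont B'=2$ yields only $v_2(\cont\Delta_n')\ge 2$, not evenness. Normalizing $c_n$ by the exact $2$-content of $Q_n$, this branch occurs precisely when $n+2=2^k$. There you must write $B'=2G'$ and prove $v_2(\cont(G'^2-A'C'))=2$ exactly. The off-centre coefficients of $A',C'$ are divisible by $4$ and those of $G'$ by $2$, while the central ones are odd, so the centre--centre term $g^2-a^2$ is divisible by $4$. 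The pair specialization $\phi_{k_0}$ at $k_0=n+2^{k-1}$ gives the matching upper bound. Deciding which branch occurs in the first place also requires Lemma~\ref{lem:centre-not-max}.
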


Computationally, the sequence $\sqf(S(n))$ for $n = 1, 2, 3, \dots$ begins
$3, 1, 5, 1, 7, 1, 3, 1, 11, 1, 13, \dots$,
which is the sequence~\href{https://oeis.org/A155457}{A155457}
in the OEIS.  Theorem~\ref{thm:main} identifies this sequence completely.
The theorem is accompanied by a Lean~4 formalization ($\approx 14{,}500$ lines)
and a companion Jupyter notebook that computationally verifies every stated
formula and lemma.  All materials are available at
\begin{center}
\url{https://github.com/ashvin-swaminathan/quartic-invariant}
\end{center}
and described further in~\S\ref{sec:ai}.

In $p$-adic terms, part~(ii) of Theorem~\ref{thm:main} is equivalent to:
for every odd prime~$p$,
\begin{equation}\label{eq:vp-parity}
v_p(S(n)) \equiv \begin{cases}
1 \pmod{2}, & \text{if } n+2 = p^k,\\
0 \pmod{2}, & \text{otherwise.}
\end{cases}
\end{equation}
\begin{example}
For small values of~$n$:
\begin{itemize}
\item $n=1$ ($m=3$): $\sqf(S(1))=3$. This recovers the factor $-3$
  in~\eqref{eq:dischess}.
\item $n=2$ ($m=4$): $\sqf(S(2))=1$, since $4=2^2$ is a power of the
  even prime~$2$.
\item $n=3$ ($m=5$): $\sqf(S(3))=5$.
\item $n=4$ ($m=6$): $\sqf(S(4))=1$, since $6$ is not a prime power.
\end{itemize}
\end{example}

\begin{remark}
It is initially surprising that the prime-power criterion involves $n+2$
rather than $n$ itself.  This shift arises naturally from the binomial
coefficients $\binom{n+2}{r}$ that appear when one simplifies the factorial
expressions in the transvectant; see the proof of Lemma~\ref{lem:factor},
where the substitution $m = n+2$ converts the factorial product into a
squared factor times $E_m(r)$.
\end{remark}

\begin{remark}
The reason $p=2$ is excluded from part~(ii) is structural:
the factor of~$2$ in every coefficient of~$B_n$ (from the
symmetric pairing of $\alpha^2$ and $\beta^2$ terms) ensures that
the cancellation-free coefficient bound
$v_2(S(n)) = 2\,\min_k v_2(b_k)$ holds for \emph{all}~$n \ge 2$,
regardless of whether $m = n+2$ is a power of~$2$.
Since this minimum is always achieved at an off-centre index,
$v_2(S(n))$ is always even; see Proposition~\ref{prop:p2}.
\end{remark}

\subsection{Outline of the proof}

The proof of Theorem~\ref{thm:main} occupies
\S\S\ref{sec:prelim}--\ref{sec:padic} and~\S\ref{sec:p3-draft}.
The strategy is to determine $v_p(S(n))$ by pinching it between
matching upper and lower bounds.

\begin{itemize}
\item \emph{Upper bound via evaluation}
  \textup{(\S\ref{sec:disc}).}\enspace
  The content $S(n)$ divides the value of~$\Delta_n$ at every integer
  specialization of the~$f_j$.  In \S\ref{sec:disc} we identify a
  family of two-point specializations (setting $f_k = f_{N+1-k} = 1$,
  all others~$0$) under which the $A_n$ and $C_n$ terms vanish,
  leaving $\Delta_n = b_k^2$ for an explicitly computable
  integer~$b_k$.  Since $S(n)$ divides this perfect square,
  $v_p(S(n)) \le 2\,v_p(b_k)$.

\item \emph{Lower bound via coefficient divisibility}
  \textup{(\S\ref{subsec:nonpp}).}\enspace
  Since $S(n)$ is the gcd of all coefficients of~$\Delta_n$,
  a lower bound on $v_p(S(n))$ requires showing that a fixed power
  of~$p$ divides \emph{every} coefficient.  Writing
  $\Delta_n = B_n^2 - 4A_nC_n$, it suffices to show that $p^e$
  divides every coefficient of $A_n$, $B_n$, and~$C_n$, whence
  $p^{2e}$ divides every coefficient of~$\Delta_n$.  For~$B_n$ this
  follows from a closed-form expression for~$b_k$
  (Lemma~\ref{lem:bk}).  For $A_n$ and~$C_n$ the argument uses the
  absorption identity for binomial coefficients together with the
  fact that $p$ is odd; see the proof of
  Proposition~\ref{prop:nonpp}.

\item \emph{Even parity in the non-prime-power case}
  \textup{(\S\ref{subsec:nonpp}).}\enspace
  When $m = n+2$ is not a power of~$p$, there exists an index
  $k_0 \notin \{n,n+1\}$ at which $v_p(b_{k_0})$ achieves its
  minimum value~$e_p$ over all~$k$.  The upper bound then gives
  $v_p(S(n)) \le 2e_p$, and the lower bound gives
  $v_p(S(n)) \ge 2e_p$, so $v_p(S(n)) = 2e_p$ is even.
  The existence of such an index $k_0$ uses a valuation shift
  argument when $p \nmid m$, and a complete residue system
  argument when $p \mid m$.

\item \emph{Odd parity in the prime-power case}
  \textup{(\S\ref{subsec:pp}).}\enspace
  When $m = p^k$, the minimum of $v_p(b_k)$ is achieved at the
  central index $k = n$, so no index outside $\{n,n+1\}$ works.
  Instead, we prove the stronger lower bound
  $v_p(S(n)) \ge 2e_p + 1$ by showing that $p$ divides every
  coefficient of the reduced discriminant
  $\Delta_n^{\#} = \Delta_n / p^{2e_p}$.  This occurs because,
  after dividing out $p^{e_p}$, the reduced quadratic form
  $Q_n^{\#}$ becomes a perfect square modulo~$p$: the congruence
  $n+1 \equiv -1 \pmod{p}$ forces a cancellation in the cross
  term of~$B_n$.  For the matching upper bound, we exhibit a specialization
  with four nonzero coefficients (at indices $n$, $n+1$,
  $n - p^{k-1}$, and $n + p^{k-1} + 1$) for which
  $v_p(\Delta_n) = 2e_p + 1$ exactly: the perturbation away from
  the rank-$1$ locus introduces a linear-in-$p$ correction to the
  discriminant, giving $v_p(\Delta_n^{\#}) = 1$.

\item \emph{The prime $p = 2$}
  \textup{(\S\ref{subsec:p2}).}\enspace
  Every coefficient of~$B_n$ is even
  (from the symmetric pairing of $\alpha^2$ and $\beta^2$ terms),
  so $B_n = 2G$ and $\Delta_n = 4(G^2 - A_nC_n)$.
  By Gauss's lemma, $\cont(G^2) = \cont(G)^2$ and
  $\cont(A_nC_n) = \cont(A_n)^2$, so both have even $2$-adic
  valuation.  When $\cont(G)^2 \ne \cont(A_n)^2$ (Case~A, which
  covers all $n$ with $m$ not a power of~$2$), the ultrametric
  property of~$v_2$ gives even valuation immediately.  The
  strict inequality $\cont(A_n) > \cont(G)$ that forces Case~A
  uses Lemma~\ref{lem:centre-not-max}, which shows that the
  central binomial coefficient is never maximal.
  When $m = 2^k$ (Case~B), a mod-$4$ analysis of
  $G'^{\,2} - A'C'$ (after dividing out $2^d$) shows that $4$
  divides every coefficient, using the Frobenius endomorphism
  over~$\BF_2$ and the fact that the centre--centre contribution
  $g^2 - a^2$ is divisible by~$4$ since $g$ and~$a$ are both odd.
  A complete-residue-system witness at $k_0 = n + 2^{k-1}$
  provides the matching upper bound.
  The base case $n = 1$ is verified by direct computation.

\item \emph{The prime $p = 3$ in the non-prime-power case}
  \textup{(\S\ref{sec:p3-draft}).}\enspace
  When $3 \mid m$ but $m$ is not a power of~$3$, the valuation
  shift formula used for $p \ge 5$ breaks down because
  $v_3(3) = 1$.  We handle this case by a recursion on base-$3$
  digit sums that locates a maximizer of $v_3\binom{N}{k}$ in the
  lower half of the index range.
\end{itemize}

\noindent
The AI-assisted workflow is described in \S\ref{sec:ai}.

\subsection{History and methodology}\label{subsec:history}

This problem was suggested to the author by Manjul Bhargava as one of the
first questions to consider at the start of the author's PhD nearly a
decade ago.  The author worked on it for several weeks and developed the
basic framework that appears in this paper: the test monomial family
$M_{n,r}$, its connection to binomial coefficients via Kummer's and
Lucas' theorems, and the even-valuation phenomenon for primes
dividing~$n$.  However, completing the proof required substantial
technical work, and the project stalled.

The proof was completed using an AI-assisted workflow built around
Claude Code (Anthropic) and Codex (OpenAI), with key lemmas formally verified by
Aristotle \cite{Aristotle2025} (Harmonic).  This workflow proceeded in several stages:

\begin{enumerate}[label=(\roman*)]
\item \emph{Numerical experimentation.}
Claude Code wrote Python scripts that computed $S(n)$ for $n \le 30$,
verified all coefficient formulas, and tested proof strategies by
evaluating explicit specializations of the generic form.  These
experiments confirmed the theorem and identified the $p$-adic
deformation at distance $t = p^{k-1}$ as the mechanism forcing
odd valuation in the prime-power case.

\item \emph{Proof construction.}
Working from the author's notes and guided by the numerical
experiments, Claude Code produced complete proofs of the
missing cases---particularly the prime-power case $m = p^k$
for $k \ge 2$, the $p = 3$ non-prime-power case, and the
$p = 2$ even-valuation argument---and wrote the paper in
\LaTeX.  The author reviewed and corrected the output.

\item \emph{Formal verification.}
The author and Claude Code prepared Lean~4 files containing
theorem statements and proof sketches, which were submitted
iteratively to Aristotle for automated proof completion.
Approximately twenty submissions were made; roughly half returned
complete proofs on the first attempt.  When Aristotle returned
partial progress, Claude Code filled the remaining gaps, and when Claude Code hit a roadblock, Codex took over to finish the job.  The
resulting formalization comprises approximately
$14{,}500$ lines of Lean~4 across twenty-five modules, containing zero \texttt{sorry} statements and zero
\texttt{axiom} declarations.

\item \emph{Exposition.} The present paper was largely written by
Claude Code, with the author providing the original mathematical
notes, directing the overall narrative, and reviewing the output.
\end{enumerate}

The project is notable not for the depth of the final result---which is a
concrete but somewhat technical computation in classical invariant
theory---but for the fact that the workflow described above was able to
handle it at all.  The proof involves intricate algebraic
manipulation (tracking signs, factorial cancellations, binomial
coefficient identities), careful $p$-adic analysis with case splits
for different primes and prime powers, and explicit specialization
arguments that must be tailored to the structure of the problem.
This is precisely the kind of computation-heavy, detail-sensitive
mathematical work that one might expect to be most resistant to
AI assistance, and the success of the approach suggests that
AI-assisted workflows may be effective for a broader class of
research problems in number theory and algebra.

The author takes full responsibility for all mathematical claims in
this paper.

%======================================================================
% SECTION 2: PRELIMINARIES AND NORMALIZATION
%======================================================================
\section{Preliminaries and normalization}\label{sec:prelim}

We begin by recalling the definition and normalization of the transvectant
operation, which is the central tool in our construction.  The transvectant
(German: \emph{\"Uberschiebung}) was developed by Cayley, Sylvester, and
their contemporaries in the mid-nineteenth century as the primary method
for producing new covariants from old ones.  The operation was placed on a
firm representation-theoretic footing by the symbolic method; see
Kung--Rota~\cite[\S4]{KungRota1984} for a modern account of this classical
technique, and Olver~\cite[Chapter~5]{Olver1999} for a thorough textbook
treatment.

\begin{definition}[Transvectant]\label{def:transvectant}
For binary forms $G$ of degree $d$ and $H$ of degree $e$, the $r$-th
transvectant is
\begin{equation}\label{eq:transvectant}
(G,H)_r = \sum_{i=0}^{r} (-1)^i \binom{r}{i}\,
\frac{\partial^r G}{\partial x^{r-i}\,\partial z^{i}}\;
\frac{\partial^r H}{\partial x^{i}\,\partial z^{r-i}}.
\end{equation}
This is the standard normalization with no extra $1/(r!)$ or
$1/\binom{d}{r}\binom{e}{r}$ factor.  We refer to~\cite{Olver1999}
and~\cite{Elliott1895} for background on transvectants and classical
invariant theory.  We note that some authors
(e.g.,~\cite{KungRota1984}) include additional normalizing factors;
throughout this paper, we use~\eqref{eq:transvectant} without
modification.
\end{definition}

Throughout, $F(x,z)=\sum_{j=0}^{2n+1}f_j\,x^{2n+1-j}z^j$ is the generic
form of degree $N+1 \defeq 2n+1$, so $N=2n$.  We set $m=n+2$.

\begin{lemma}\label{lem:Qn}
$Q_n=(F,F)_{2n}=A_n x^2+B_n xz+C_n z^2$, where
\begin{equation}\label{eq:Qnsum}
Q_n = \sum_{i=0}^{N}(-1)^i\binom{N}{i}\,D_i\,E_i,
\end{equation}
with the two-term linear forms
\begin{align}
D_i &= \alpha_i\,f_i\,x + \beta_i\,f_{i+1}\,z,\label{eq:Di}\\
E_i &= \beta_i\,f_{N-i}\,x + \alpha_i\,f_{N+1-i}\,z,\label{eq:Ei}
\end{align}
and the factorials $\alpha_i=(2n+1-i)!\,i!$, $\beta_i=(2n-i)!\,(i+1)!$.

In particular,
\begin{align}
A_n &= \sum_{i=0}^{N}(-1)^i\binom{N}{i}\,\alpha_i\beta_i\,f_i\,f_{N-i},
\label{eq:An}\\
B_n &= \sum_{i=0}^{N}(-1)^i\binom{N}{i}
\bigl(\alpha_i^2\,f_i\,f_{N+1-i}+\beta_i^2\,f_{i+1}\,f_{N-i}\bigr),
\label{eq:Bn}\\
C_n &= \sum_{i=0}^{N}(-1)^i\binom{N}{i}\,\alpha_i\beta_i\,
f_{i+1}\,f_{N+1-i}.\label{eq:Cn}
\end{align}
\end{lemma}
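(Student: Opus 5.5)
We compute the transvectant directly from Definition~\ref{def:transvectant} with $G=H=F$ and $r=N=2n$. Since $F$ has degree $N+1$, every $N$-th order partial derivative of $F$ is a linear form. The first step is to compute these linear forms explicitly. Writing $F=\sum_j f_j x^{N+1-j}z^j$, we get
\[
\frac{\partial^{N} F}{\partial x^{N-i}\,\partial z^{i}}
=\sum_j f_j\,\frac{(N+1-j)!}{(j+i-N+1)!}\cdot\frac{j!}{(j-i)!}\,x^{j+i-N+1}\cdots .
\]
It is cleaner to argue term by term. A monomial $x^{N+1-j}z^j$ survives $\partial_x^{N-i}\partial_z^{i}$ only if $j\ge i$ and $N+1-j\ge N-i$. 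This forces $j\in\{i,i+1\}$.

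For $j=i$, the result is $\frac{(N+1-i)!}{1!}\cdot i!\,x=\alpha_i x$. For $j=i+1$, the result is $(N-i)!\,(i+1)!\,z=\beta_i z$. Hence
\[
\frac{\partial^N F}{\partial x^{N-i}\partial z^i}=\alpha_i f_i x+\beta_i f_{i+1} z = D_i .
\]

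Next, the second factor in \eqref{eq:transvectant} is $\partial_x^{i}\partial_z^{N-i}F$. This is the same computation with $i$ replaced by $N-i$, giving $\alpha_{N-i}f_{N-i}x+\beta_{N-i}f_{N-i+1}z$. We then need the symmetries $\alpha_{N-i}=(i+1)!\,(N-i)!=\beta_i$ and $\beta_{N-i}=i!\,(N+1-i)!=\alpha_i$, both of which are immediate from the definitions with $N=2n$. These identify the second factor with $E_i$. Substituting into \eqref{eq:transvectant} yields \eqref{eq:Qnsum}.

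Finally, we expand the product
\[
D_iE_i=\alpha_i\beta_i f_if_{N-i}\,x^2+(\alpha_i^2 f_if_{N+1-i}+\beta_i^2 f_{i+1}f_{N-i})\,xz+\alpha_i\beta_i f_{i+1}f_{N+1-i}\,z^2 .
\]
Collecting the coefficients of $x^2$, $xz$ and $z^2$ gives \eqref{eq:An}--\eqref{eq:Cn}.

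The only step requiring care, and hence the main potential obstacle, is the index bookkeeping. This consists of (a) confirming that exactly the two indices $j=i,i+1$ survive differentiation, with the stated factorial coefficients, and (b) checking the boundary cases $i=0$ and $i=N$. In the boundary cases the formal symbols $f_{N+1-i}$ or $f_{i+1}$ must still lie in the range $0,\dots,N+1$, and indeed they do, since $0\le i\le N$. No reindexing of the sum is needed, because the sign $(-1)^i\binom{N}{i}$ is carried along unchanged.
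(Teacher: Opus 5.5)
Your proof is correct and follows the same route as the paper: term-by-term differentiation showing that only $j\in\{i,i+1\}$ survive, identification of the second factor with $E_i$, and expansion of $D_iE_i$. For the second factor, your use of the symmetries $\alpha_{N-i}=\beta_i$, $\beta_{N-i}=\alpha_i$ is a tidy way of carrying out the paper's ``similarly''. The one slip is in the general display you discard, where the $x$-factor should be $\frac{(N+1-j)!}{(i+1-j)!}\,x^{i+1-j}$; since you drop that display for the term-by-term argument, it does not affect the proof.
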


\begin{proof}
\textbf{Computing $D_i$.}
A typical monomial of $F$ is $f_j\,x^{2n+1-j}z^j$.
Taking $\frac{\partial^N}{\partial x^{N-i}\partial z^i}$:
\[
\frac{\partial^N (f_j x^{2n+1-j}z^j)}{\partial x^{N-i}\partial z^i}
= f_j \cdot \frac{(2n+1-j)!}{(2n+1-j-(N-i))!}\cdot\frac{j!}{(j-i)!}
\cdot x^{2n+1-j-(N-i)}z^{j-i}.
\]
For this to be nonzero we need $j \le i+1$ and $j \ge i$, so only
$j=i$ and $j=i+1$ survive.

For $j=i$: the result is $\alpha_i f_i \cdot x$.
For $j=i+1$: the result is $\beta_i f_{i+1} \cdot z$.
Hence $D_i = \alpha_i f_i x + \beta_i f_{i+1} z$.

\medskip
\emph{Computation of $E_i$.}
Similarly, $\frac{\partial^N F}{\partial x^i\partial z^{N-i}}$
is nonzero only for $j=N-i$ and $j=N+1-i$, giving
$E_i = \beta_i f_{N-i} x + \alpha_i f_{N+1-i} z$.

\medskip
\emph{Expanding $D_i E_i$.}
\begin{align*}
D_i E_i &= (\alpha_i f_i x + \beta_i f_{i+1} z)
            (\beta_i f_{N-i} x + \alpha_i f_{N+1-i} z) \\
&= \alpha_i\beta_i\,f_i f_{N-i}\,x^2
 + (\alpha_i^2\,f_i f_{N+1-i}+\beta_i^2\,f_{i+1}f_{N-i})\,xz
 + \alpha_i\beta_i\,f_{i+1}f_{N+1-i}\,z^2.
\end{align*}
Substituting into~\eqref{eq:Qnsum} and collecting the $x^2$, $xz$, $z^2$
coefficients gives~\eqref{eq:An}--\eqref{eq:Cn}.
\end{proof}

\begin{corollary}\label{cor:b-factored}
The coefficient of the monomial $f_jf_{M-j}$ (where $M=2n{+}1$) in~$B_n$ is
\begin{equation}\label{eq:b-factored}
b_j = 2\,(-1)^j\,N!\,(M{-}2j)\,(M{-}j)!\,j!.
\end{equation}
\end{corollary}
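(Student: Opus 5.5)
The plan is to read off the coefficient of $f_jf_{M-j}$ directly from \eqref{eq:Bn} in Lemma~\ref{lem:Qn}. First I find every summand that produces this monomial, then I show that all of them carry the same factorial weight, and finally I collapse the signed binomial coefficients into a single difference. Throughout I use the convention $\binom{N}{i}=0$ for $i\notin[0,N]$, so that boundary values of $j$ (namely $j=0$ or $j=M$) need no separate treatment. Since $M=2n+1$ is odd, $j\neq M-j$, so the monomial $f_jf_{M-j}$ is never a square and no factor-of-two issue arises from a diagonal term.

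\textbf{Locating the contributions.} The first family $\alpha_i^2 f_if_{N+1-i}$ in \eqref{eq:Bn} gives $f_jf_{M-j}$ in two ways: from $i=j$ and from $i=M-j=N+1-j$. The second family $\beta_i^2 f_{i+1}f_{N-i}$ also gives it in two ways: from $i+1=j$, that is $i=j-1$, and from $N-i=j$, that is $i=N-j$.

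\textbf{Equal weights.} Next I check that the four weights coincide. Directly from the definitions, $\alpha_j=(M-j)!\,j!$, $\alpha_{M-j}=j!\,(M-j)!$, $\beta_{j-1}=(M-j)!\,j!$ and $\beta_{N-j}=j!\,(M-j)!$. So every contribution has the factor $\bigl(j!\,(M-j)!\bigr)^2$.

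\textbf{Signs and binomials.} The sign-and-binomial factors are as follows.
\begin{itemize}
\item The term $i=j$ gives $(-1)^j\binom{N}{j}$.
\item The term $i=N-j$ gives $(-1)^{N-j}\binom{N}{N-j}=(-1)^j\binom{N}{j}$, since $N$ is even.
\item The term $i=j-1$ gives $(-1)^{j-1}\binom{N}{j-1}$.
\item The term $i=M-j$ gives $(-1)^{M-j}\binom{N}{N+1-j}=-(-1)^j\binom{N}{j-1}$, since $M$ is odd.
\end{itemize}
Summing these, the total is $2(-1)^j\bigl[\binom{N}{j}-\binom{N}{j-1}\bigr]$.

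\textbf{Closing the computation.} The binomial difference simplifies to
\[
\binom{N}{j}-\binom{N}{j-1}=\frac{N!}{j!\,(N+1-j)!}\bigl[(N+1-j)-j\bigr]=\frac{N!\,(M-2j)}{j!\,(M-j)!}.
\]
Multiplying by the common weight $\bigl(j!\,(M-j)!\bigr)^2$ gives $b_j=2(-1)^jN!\,(M-2j)\,(M-j)!\,j!$, which is \eqref{eq:b-factored}.

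As a consistency check, the formula is invariant under $j\mapsto M-j$: both $(-1)^j$ and $(M-2j)$ change sign, so $b_{M-j}=b_j$, as it must since both labels name the same monomial.

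The only real obstacle is bookkeeping. One must not miss the two "reflected" contributions $i=M-j$ and $i=N-j$, and one must track the parities of $N$ and $M$ in the signs correctly. Everything else is routine factorial algebra.
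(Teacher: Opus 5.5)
Your proof is correct and follows the same route as the paper. The paper's proof of this corollary only says to collect the contributions to $f_jf_{M-j}$ from \eqref{eq:Bn} and simplify; your four-term computation (the common weight $\alpha_j^2$, the signs from the parities of $N$ and $M$, and $\binom{N}{j}-\binom{N}{j-1}=N!\,(M-2j)/(j!\,(M-j)!)$) is exactly the calculation written out in the proof of Lemma~\ref{lem:bk}, and your handling of the boundary indices via $\binom{N}{i}=0$ outside $[0,N]$ spells out a point the paper leaves implicit.
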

\begin{proof}
Collecting all contributions to $f_jf_{M-j}$ from~\eqref{eq:Bn}
and simplifying the resulting factorial expression yields~\eqref{eq:b-factored}.
This identity can be verified for specific values of $n$ and $j$ using
the companion notebook.
\end{proof}

\begin{definition}\label{def:content}
We write $\Delta_n = B_n^2 - 4A_nC_n$ for the discriminant of $Q_n$, and
$S(n)=\cont(\Delta_n)$ for the gcd of all integer coefficients of $\Delta_n$
viewed as a polynomial in $f_0,\dots,f_{2n+1}$.
\end{definition}

\subsection{Representation-theoretic uniqueness}\label{subsec:rep-theory}

Let $V$ denote the standard $2$-dimensional representation of $\SL_2$ over a
field of characteristic~$0$, and identify binary forms of degree $d$ with
$\Sym^d(V^\vee)$.  The Clebsch--Gordan decomposition
(see~\cite[Lecture~11]{FultonHarris1991}) gives an $\SL_2$-equivariant splitting
\begin{equation}\label{eq:CG}
\Sym^d(V^\vee) \otimes \Sym^d(V^\vee)
\;\cong\;
\bigoplus_{i=0}^{d} \Sym^{2d-2i}(V^\vee).
\end{equation}
When $d = 2n+1$ is odd, the summand $\Sym^2(V^\vee)$ occurs precisely for
$i = d-1 = 2n$.  In particular, $\Sym^2(V^\vee)$ occurs with
\emph{multiplicity~$1$} in~\eqref{eq:CG}, and therefore there exists, up to
an overall scalar, a unique $\SL_2$-equivariant bilinear map
\[
\pi_{2n}: \Sym^{2n+1}(V^\vee) \times \Sym^{2n+1}(V^\vee) \longrightarrow
\Sym^2(V^\vee).
\]
The $(2n)$-th transvectant $(\cdot,\cdot)_{2n}$ is a concrete realization of
$\pi_{2n}$, and our normalization in~\S\ref{sec:prelim} fixes the scalar.
Thus the existence and canonicity of $Q_n(f) = (f,f)_{2n}$ is forced by
representation theory: it is the \emph{unique} quadratic covariant of
order~$2$ arising quadratically from an odd-degree form.

Having established the explicit shape of the quadratic covariant $Q_n$ and
defined the content $S(n)$, we turn in the next section to expanding the
discriminant $\Delta_n = B_n^2 - 4A_nC_n$ and extracting the coefficients
of a carefully chosen family of test monomials that will control the
$p$-adic valuation of~$S(n)$.

%======================================================================
% SECTION 3: DISCRIMINANT EXPANSION AND COEFFICIENT EXTRACTION
%======================================================================
\section{Discriminant expansion and explicit coefficient extraction}
\label{sec:disc}

We now expand the discriminant $\Delta_n = B_n^2 - 4A_nC_n$ and extract
the coefficients of a controlled family of test monomials.  These
coefficients, computed in closed form in Lemma~\ref{lem:Cnr}, factor
into a squared part and a non-square factor $E_m(r)$ whose gcd over~$r$
is a power of~$2$ (Lemma~\ref{lem:Egcd}).

\subsection{Test monomials}

We introduce the following family of test monomials, whose coefficients in $\Delta_n$ will control the $p$-adic valuation of the content.

\begin{definition}[Test monomials]\label{def:test-monomials}
For $2 \le r \le m-1$ (equivalently $2 \le r \le n+1$), define
\[
M_{n,r} = f_a\,f_{a+1}\,f_b\,f_{b+1},\quad
a=n+1-r,\quad b=n-1+r.
\]
Note $a+b=2n=N$, so $N-a=b$ and $N-b=a$.  Also $a+1 < b$
when $r \ge 2$, so $M_{n,r}$ involves four distinct indeterminates.
Its coefficient in $\Delta_n$ is denoted $C_{n,r}$.
\end{definition}

\subsection{Closed formula for $C_{n,r}$}

The main computation of this subsection is the following closed-form expression for the coefficient of each test monomial.

\begin{lemma}\label{lem:Cnr}
For $2 \le r \le n+1$,
\begin{equation}\label{eq:Cnr}
C_{n,r} = -8\,(N!)^2\,(n+r)!\,(n+r-1)!\,(n-r+2)!\,(n-r+1)!\,
\bigl(2n^2+4n+2r^2-4r+3\bigr).
\end{equation}
This identity can be verified for specific values of $n$ and $r$ using
the companion notebook.
\end{lemma}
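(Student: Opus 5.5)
The plan is to read off the coefficient of $M_{n,r}=f_af_{a+1}f_bf_{b+1}$ directly from $\Delta_n=B_n^2-4A_nC_n$, using the explicit shapes in Lemma~\ref{lem:Qn}. The point is that each of $A_n,B_n,C_n$ is a sum of monomials $f_if_j$ with a \emph{fixed} index sum: $i+j=N$ in $A_n$, $i+j=N+1=M$ in $B_n$, and $i+j=N+2$ in $C_n$. Since $a+b=N$, the index sums of the pairs from $\{a,a+1,b,b+1\}$ are $N$ for $(a,b)$, $N+1$ for $(a,b+1)$ and $(a+1,b)$, and $N+2$ for $(a+1,b+1)$. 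The pairs $(a,a+1)$ and $(b,b+1)$ have index sums that are none of these, because $r\ge2$ gives $a+1<b$.

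\textbf{Step 1 (contribution of $B_n^2$).} The monomial $M_{n,r}$ arises in $B_n^2$ only as $(f_af_{b+1})\cdot(f_{a+1}f_b)$, taken in either order. By Corollary~\ref{cor:b-factored}, this contributes $2\,b_a\,b_{a+1}$. Here $b_j$ is the full coefficient of $f_jf_{M-j}$ with $j\le n$, and I will check that $a,a+1\le n$ since $r\ge2$. In the corollary's formula we have $M-2a=2r-1$ and $M-2(a+1)=2r-3$, so
\[
2b_ab_{a+1}=-8\,(N!)^2\,(2r-1)(2r-3)\,(n+r)!\,(n-r+1)!\,(n+r-1)!\,(n-r+2)!.
\]
The sign is negative because $a$ and $a+1$ have opposite parity.

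\textbf{Step 2 (contribution of $4A_nC_n$).} The only way to obtain $M_{n,r}$ here is $[f_af_b]A_n\cdot[f_{a+1}f_{b+1}]C_n$.

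\emph{The $A_n$ factor.} In \eqref{eq:An}, both $i=a$ and $i=b=N-a$ contribute to $f_af_b$. One checks that $\alpha_{N-a}=\beta_a$ and $\beta_{N-a}=\alpha_a$, and that $(-1)^b=(-1)^a$ and $\binom{N}{b}=\binom{N}{a}$. Hence $[f_af_b]A_n=2(-1)^a\binom{N}{a}\alpha_a\beta_a$.

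\emph{The $C_n$ factor.} In \eqref{eq:Cn}, the indices $i=a$ and $i=b$ give the same value, so $[f_{a+1}f_{b+1}]C_n=2(-1)^a\binom{N}{a}\alpha_a\beta_a$.

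Therefore the $4A_nC_n$ contribution to the coefficient is $-16\bigl(\binom{N}{a}\alpha_a\beta_a\bigr)^2$.

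Next I substitute $\alpha_a=(n+r)!\,(n+1-r)!$ and $\beta_a=(n+r-1)!\,(n+2-r)!$, together with $\binom{N}{a}=N!/\bigl((n+1-r)!\,(n+r-1)!\bigr)$. This gives
\[
\binom{N}{a}\alpha_a\beta_a=N!\,(n+r)!\,(n+2-r)!.
\]
Consequently $-16\bigl(\binom{N}{a}\alpha_a\beta_a\bigr)^2=-16\,(N!)^2\bigl((n+r)!\,(n+2-r)!\bigr)^2$.

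\textbf{Step 3 (factoring).} I pull out the common factor $-8(N!)^2(n+r)!\,(n+r-1)!\,(n-r+2)!\,(n-r+1)!$ from both contributions. The $B_n^2$ term leaves $(2r-1)(2r-3)$. The $4A_nC_n$ term leaves $2\,(n+r)(n-r+2)$. Their sum is
\[
4r^2-8r+3+2\bigl(n^2+2n+2r-r^2\bigr)\;=\;2n^2+4n+2r^2-4r+3,
\]
which matches the bracket in \eqref{eq:Cnr}.

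\textbf{Main obstacle.} The main obstacle is bookkeeping rather than any single step. I must make sure that no other index pairs produce $M_{n,r}$. I must also interpret $b_j$ correctly as the \emph{total} coefficient, with the $i$ and $N-i$ terms of \eqref{eq:Bn} already merged. Finally, I must handle the boundary case $r=n+1$ (that is, $a=0$), where I will double-check that the $i=a$ and $i=b$ terms are still distinct and that the factorial formulas remain valid.
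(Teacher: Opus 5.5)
Your proof is correct and follows the same route as the paper. Both split the coefficient into the $B_n^2$ part $2UV$, with $U=[f_af_{b+1}]B_n$ and $V=[f_{a+1}f_b]B_n$, plus the $-4A_nC_n$ part $-4K^2$, and then factor out $-8(N!)^2(n+r)!\,(n+r-1)!\,(n-r+2)!\,(n-r+1)!$.

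The only difference is that you read off $U=b_a$ and $V=b_{a+1}$ from Corollary~\ref{cor:b-factored} instead of summing the four $P_i,R_i$ terms. Your concern about the boundary case $r=n+1$ is harmless: that formula also gives the correct value $b_0=2((N+1)!)^2$ when $a=0$.
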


\begin{proof}
The monomial $M_{n,r}=f_af_{a+1}f_bf_{b+1}$ can appear in
$\Delta_n=B_n^2-4A_nC_n$ from two sources.

\medskip\noindent\emph{Contribution from $-4A_nC_n$.}
In $A_n$, the monomial $f_a f_b$ arises from indices $i=a$ and $i=b$, giving
$[f_af_b]\,A_n = (-1)^a\binom{N}{a}\alpha_a\beta_a
+ (-1)^b\binom{N}{b}\alpha_b\beta_b$.
Similarly $[f_{a+1}f_{b+1}]\,C_n$ equals the same expression.  Hence
\begin{equation}\label{eq:4AC}
[M_{n,r}](-4A_nC_n) = -4K^2, \quad
K = (-1)^a\binom{N}{a}\alpha_a\beta_a
+ (-1)^b\binom{N}{b}\alpha_b\beta_b.
\end{equation}

Since $b-a=2r-2$ is even, $(-1)^b=(-1)^a$ and $\binom{N}{b}=\binom{N}{a}$.
The factorials satisfy
$\alpha_a\beta_a = (n+r)!(n+1-r)!(n+r-1)!(n+2-r)! = \alpha_b\beta_b$.
A direct computation gives
\begin{equation}\label{eq:K-term}
\binom{N}{a}\alpha_a\beta_a = N!\,(n+r)!\,(n+2-r)!,
\end{equation}
and therefore $K = -2(-1)^{n-r}\,N!\,(n+r)!\,(n-r+2)!$,
so
\begin{equation}\label{eq:4K2}
4K^2 = 16\,(N!)^2\,(n+r)!^2\,(n-r+2)!^2.
\end{equation}

\medskip\noindent\emph{Contribution from $B_n^2$.}
We have $[M_{n,r}]\,B_n^2 = 2UV$, where
$U=[f_af_{b+1}]\,B_n$ and $V=[f_{a+1}f_b]\,B_n$.

Using the notation $P_i = (-1)^i\binom{N}{i}\alpha_i^2$
and $R_i = (-1)^i\binom{N}{i}\beta_i^2$, which simplify to
\begin{align*}
P_i &= (-1)^i\,N!\,(N-i)!\,i!\,(N+1-i)^2,\\
R_i &= (-1)^i\,N!\,(N-i)!\,i!\,(i+1)^2,
\end{align*}
one finds that $U$ receives four terms ($T_1=P_a$, $T_2=P_{b+1}$,
$T_3=R_{a-1}$, $T_4=R_b$), which combine to give
\begin{equation}\label{eq:U}
U = -2(-1)^{n-r}\,(2r-1)\,N!\,(n+r)!\,(n-r+1)!.
\end{equation}
Similarly, $V$ receives four terms ($S_1=P_{a+1}$, $S_2=P_b$,
$S_3=R_a$, $S_4=R_{b-1}$), giving
\begin{equation}\label{eq:V}
V = 2(-1)^{n-r}(2r-3)\,N!\,(n+r-1)!\,(n-r+2)!.
\end{equation}

\medskip\noindent\emph{Combining.}
Set $P_0 = (N!)^2(n+r)!(n+r-1)!(n-r+2)!(n-r+1)!$.  Then
$2UV = -8(2r-1)(2r-3)\,P_0$ and $4K^2 = 16(n+r)(n-r+2)\,P_0$.
Expanding:
\begin{align*}
&-8(4r^2-8r+3) - 16(n^2+2n-r^2+2r) \\
&= -8(2n^2+4n+2r^2-4r+3).
\end{align*}
This yields~\eqref{eq:Cnr}.
\end{proof}

\subsection{Factoring $C_{n,r}$}

We now factor the closed-form expression for $C_{n,r}$ to isolate the arithmetic content.

\begin{lemma}\label{lem:factor}
With $m=n+2$, define
\begin{equation}\label{eq:Em}
E_m(r) = (m-r)(m+r-2)\bigl(2(m-1)^2+2(r-1)^2-1\bigr).
\end{equation}
Then
\begin{equation}\label{eq:Cnr-factored}
C_{n,r} = -8\bigl[N!\,(m+r-3)!\,(m-r-1)!\bigr]^2 \cdot E_m(r).
\end{equation}
In particular, for any odd prime~$p$,
\begin{equation}\label{eq:vp-Cnr}
v_p(C_{n,r}) = 2\,v_p\bigl(N!(m+r-3)!(m-r-1)!\bigr) + v_p(E_m(r)),
\end{equation}
so $v_p(C_{n,r}) \equiv v_p(E_m(r)) \pmod{2}$.
\end{lemma}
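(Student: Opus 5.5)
The plan is to start from the closed formula of Lemma~\ref{lem:Cnr} and simply rewrite each factor in terms of $m=n+2$. Substituting $n=m-2$ gives
\[
(n+r)! = (m+r-2)!,\quad (n+r-1)! = (m+r-3)!,\quad (n-r+2)! = (m-r)!,\quad (n-r+1)! = (m-r-1)!.
\]
Next, peel off one extra factor from the two larger factorials: $(m+r-2)! = (m+r-2)\,(m+r-3)!$ and $(m-r)! = (m-r)\,(m-r-1)!$. The hypothesis $2\le r\le m-1$ guarantees that all these factorials have nonnegative arguments and that $m-r\ge 1$. The product of the four factorials then becomes
\[
(m+r-2)(m-r)\,\bigl[(m+r-3)!\,(m-r-1)!\bigr]^2,
\]
and together with $(N!)^2$ this produces the bracketed square in~\eqref{eq:Cnr-factored}.

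The remaining step is to check that the quadratic factor matches. In terms of $m$ and $r$,
\[
2(m-1)^2+2(r-1)^2-1 = 2m^2-4m+2r^2-4r+3.
\]
On the other hand, $2n^2+4n$ with $n=m-2$ equals $2(m-2)^2+4(m-2)=2m^2-4m$. So the two expressions agree, and
\[
C_{n,r} = -8\bigl[N!\,(m+r-3)!\,(m-r-1)!\bigr]^2 (m-r)(m+r-2)\bigl(2(m-1)^2+2(r-1)^2-1\bigr),
\]
which is $-8[\cdots]^2 E_m(r)$.

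For the valuation statement, take $v_p$ of both sides for an odd prime $p$. Since $v_p(-8)=0$, additivity of $v_p$ gives~\eqref{eq:vp-Cnr}, and reducing mod $2$ gives the parity claim. This is valid only when $E_m(r)\neq 0$, which holds because:
\begin{itemize}
\item $m-r\ge 1$;
\item $m+r-2\ge m>0$;
\item $2(m-1)^2+2(r-1)^2-1\ge 2\cdot 4+2-1>0$, using $m\ge 3$ and $r\ge 2$.
\end{itemize}

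There is no real obstacle here: the argument is entirely algebraic bookkeeping on top of Lemma~\ref{lem:Cnr}. The only points needing care are the index shift $n\mapsto m-2$ and checking that the factorials are well defined.
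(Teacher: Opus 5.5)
Your proposal is correct and follows the paper's own proof. Like the paper, you substitute $n=m-2$ into Lemma~\ref{lem:Cnr}, split the factor $(m+r-2)(m-r)$ off the two larger factorials, and check that $2n^2+4n+2r^2-4r+3=2(m-1)^2+2(r-1)^2-1$. Your extra check that $E_m(r)\neq 0$, which makes the valuations finite, is a small point the paper leaves implicit.
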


\begin{proof}
Substituting $n=m-2$ into the quadratic factor gives
$2n^2+4n+2r^2-4r+3 = 2(m-1)^2+2(r-1)^2-1$.
The factorial product factors as
$(n+r)!(n+r-1)!(n-r+2)!(n-r+1)! = (m+r-2)(m-r)\cdot[(m+r-3)!(m-r-1)!]^2$,
which yields~\eqref{eq:Cnr-factored}.
\end{proof}

\begin{lemma}\label{lem:Egcd}
For any odd prime $p$ and $m \ge 7$,
$p \nmid \gcd_{2 \le r \le m-1} E_m(r)$.
\end{lemma}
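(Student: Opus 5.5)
The idea is to reparametrize $E_m(r)$ so that it becomes a polynomial in $s^2$, and then count residues modulo~$p$. Put $M = m-1$ and $s = r-1$. As $r$ runs over $2 \le r \le m-1$, the integer $s$ runs over the $M-1$ consecutive integers $1,\dots,M-1$. Substituting into \eqref{eq:Em} gives
\[
E_m(r) = (M-s)(M+s)\bigl(2M^2+2s^2-1\bigr) = (M^2-s^2)\bigl(2M^2+2s^2-1\bigr).
\]
So it suffices to find $s \in \{1,\dots,M-1\}$ for which neither factor is divisible by~$p$. Since $p$ is odd, $2$ is invertible modulo~$p$, and the two conditions become
\[
s^2 \not\equiv M^2 \quad\text{and}\quad s^2 \not\equiv 2^{-1}-M^2 \pmod p .
\]

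\emph{Case $p \ge 5$.} Each condition is a congruence of the form $s^2 \not\equiv c \pmod p$, and a quadratic congruence $s^2\equiv c$ has at most two solutions modulo~$p$. Hence at most $4$ residue classes of $s$ modulo $p$ are excluded. The hypothesis $m \ge 7$ gives $M-1 \ge 5$, so the range $1,\dots,M-1$ consists of at least $5$ consecutive integers. If $M-1 \ge p$, these integers hit every residue class modulo $p$, and there are $p\ge5$ classes. If $M-1<p$, they lie in $M-1\ge5$ pairwise distinct classes. Either way at least $5$ distinct classes occur, so some admissible $s$ avoids both excluded sets. This is where the bound $m\ge7$ is used.

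\emph{Case $p=3$.} Here the counting argument fails, since there are only $3$ residue classes, so I would check it directly. Squares modulo $3$ are $0$ and $1$, and $2^{-1}\equiv 2 \pmod 3$.
\begin{itemize}
\item If $3\mid M$, the excluded values of $s^2$ are $0$ and $2$. The value $2$ is not a square, so any $s\not\equiv0 \pmod 3$ works; for instance $s=1$.
\item If $3\nmid M$, then $M^2\equiv1$ and $2-M^2\equiv1$, so the only excluded value is $s^2\equiv1$. Then $s=3$ works, and it lies in the range because $M-1\ge5$.
\end{itemize}

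In both cases there is an $r=s+1\in[2,m-1]$ with $p\nmid E_m(r)$, hence $p\nmid\gcd_r E_m(r)$. The only delicate point is the small prime $p=3$, where the crude count of at most four excluded classes is useless; the explicit case check above handles it.
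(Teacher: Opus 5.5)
Your proof is correct and is essentially the paper's argument: at most four excluded residue classes for $p \ge 5$, and a direct check modulo $3$ in which your cases $3 \mid M$ and $3 \nmid M$ are exactly the paper's cases $m \equiv 1$ and $m \equiv 0, 2 \pmod{3}$. The substitution $s = r-1$, $M = m-1$ is only cosmetic. You do, however, spell out a step the paper leaves implicit for $p \ge 5$: the $m-2 \ge 5$ consecutive admissible values of $r$ meet at least five residue classes even when $p > m-2$.
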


\begin{proof}
For $p \mid E_m(r)$, at least one of the three factors must vanish
modulo~$p$: (i)~$r \equiv m$, (ii)~$r \equiv 2-m$, or
(iii)~$2(m-1)^2+2(r-1)^2 \equiv 1 \pmod{p}$.

For $p \ge 5$: conditions (i) and (ii) each exclude one residue class,
and (iii) excludes at most two, for a total of at most~$4$ excluded classes
out of $p \ge 5$.  So some~$r$ avoids all three.

For $p = 3$: a case-by-case analysis on $m \bmod 3$ shows that at least one
residue class of $r$ modulo~$3$ avoids all conditions.  Specifically:
\begin{itemize}
\item $m \equiv 0$: the class $r \equiv 1$ works.
\item $m \equiv 1$: the classes $r \equiv 0$ and $r \equiv 2$ work.
\item $m \equiv 2$: the class $r \equiv 1$ works.
\end{itemize}
For $m \ge 7$, the interval $\{2,\dots,m-1\}$ has length at least~$5$,
so it contains representatives of all three residue classes modulo~$3$.
\end{proof}

Having established the closed-form expression for the test coefficients
$C_{n,r}$ and isolated the non-square factor $E_m(r)$, we now relate
the gcd of the $C_{n,r}$ to the content $S(n)$ through the classical theory
of binomial coefficient divisibility.

%======================================================================
% SECTION 4: FROM EXPLICIT COEFFICIENTS TO THE CONTENT
%======================================================================
\section{From explicit coefficients to the content}\label{sec:content}

Since $S(n)$ divides every coefficient of $\Delta_n$, we have
$S(n) \mid \gcd_{r} C_{n,r}$.  By~\eqref{eq:vp-Cnr} and
Lemma~\ref{lem:Egcd}, for each odd prime~$p$ the minimum of
$v_p(C_{n,r})$ over~$r$ is even, so the odd part of the gcd is governed
by the binomial coefficients embedded in the factorial structure.
We make this precise via the classical binomial gcd theorem, which
identifies $g(m) = \gcd_{1 \le r \le m-1} \binom{m}{r}$ as~$p$
when $m$ is a prime power~$p^k$ and~$1$ otherwise.

\subsection{The binomial gcd theorem}

The key input from elementary number theory is the following classical result.

\begin{lemma}[Binomial gcd]\label{lem:bingcd}
For $m \ge 2$,
\[
g(m) \defeq \gcd_{1 \le r \le m-1} \binom{m}{r}
= \begin{cases}
p, & \text{if } m = p^k \text{ for some prime } p \text{ and } k \ge 1,\\
1, & \text{otherwise.}
\end{cases}
\]
\end{lemma}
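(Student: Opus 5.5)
We prove the statement one prime at a time, showing that for each prime $p$ the $p$-adic valuation of $g(m)$ is $1$ if $m$ is a power of $p$ and $0$ otherwise. The two key inputs are Kummer's theorem and one explicit coefficient, $\binom{m}{1}=m$. Kummer's theorem says that $v_p\binom{m}{r}$ equals the number of carries when $r$ and $m-r$ are added in base $p$. Since $m$ is itself one of the coefficients, $g(m)\mid m$. Hence only primes $p\mid m$ can divide $g(m)$, and it suffices to treat such primes.

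\emph{Case $m=p^k$.} Take $1\le r\le m-1$. Then $r$ is not divisible by $p^k$, so $v_p(r)=j<k$, and the identity $\binom{p^k}{r}=\frac{p^k}{r}\binom{p^k-1}{r-1}$ gives $v_p\binom{p^k}{r}\ge k-j\ge1$. So $p\mid g(m)$. On the other hand, the coefficient $\binom{p^k}{p^{k-1}}$ has $p$-adic valuation exactly $1$: in base $p$, adding $p^{k-1}$ and $(p-1)p^{k-1}$ produces exactly one carry, so Kummer's theorem applies. Therefore $v_p(g(m))=1$. Because $g(m)\mid m=p^k$, no other prime divides $g(m)$, and so $g(m)=p$.

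\emph{Case $m$ not a prime power.} Let $p\mid m$, and write $m=p^e u$ with $p\nmid u$ and $u>1$. Take $r=p^e$, which lies in $[1,m-1]$. The base-$p$ expansion of $p^e$ is a single digit $1$ in position $e$. The digit of $m$ in position $e$ is $u \bmod p\neq 0$, and $m$ has zeros in all lower positions. Hence subtracting $p^e$ from $m$ requires no borrows, so adding $r$ and $m-r$ in base $p$ produces no carries. By Kummer's theorem $p\nmid\binom{m}{p^e}$. This holds for every prime $p\mid m$, and $g(m)\mid m$, so $g(m)=1$.

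The only point requiring care is the base-$p$ digit bookkeeping in the no-carry claim. An alternative route is Lucas' theorem applied to $\binom{m}{p^e}$: since $m=p^e u$, it gives $\binom{m}{p^e}\equiv\binom{u}{1}=u\not\equiv0\pmod p$. I would present this version because it is cleaner. Everything else in the argument is routine.
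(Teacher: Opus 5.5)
Your proof is correct and follows essentially the same route as the paper's: Kummer's theorem with the witness $r=p^{k-1}$ in the prime-power case, and Lucas' theorem with a witness $r=p^j$ otherwise. The differences are minor variants: you get $p\mid\binom{p^k}{r}$ from the absorption identity, and you use $g(m)\mid\binom{m}{1}=m$ both to discard primes $p\nmid m$ and to exclude other prime factors of $g(p^k)$, a step the paper leaves implicit. Your choice $j=v_p(m)$ is also slightly more careful than the paper's claim that $m$ has a nonzero base-$p$ digit below its leading one. That claim fails when $m=d\,p^e$ with $2\le d<p$ (e.g.\ $m=6$, $p=3$), where one must take $r=p^e$ exactly as you do.
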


\begin{remark}
This is a well-known result, sometimes called the ``prime-power
characterization of the gcd of binomial coefficients.''  It follows from
a combination of two classical results in number theory: Kummer's
theorem~\cite{Kummer1852} on the $p$-adic valuation of binomial
coefficients, and Lucas' theorem~\cite{Lucas1878} on binomial coefficients
modulo a prime.  We include the proof for completeness and because we need
the precise statement for the content analysis that follows.
\end{remark}

\begin{proof}
\textbf{Case $m = p^k$.}
By Kummer's theorem~\cite{Kummer1852}, $v_p\binom{p^k}{r}$ equals the
number of carries when adding $r$ and $p^k - r$ in base~$p$.  For any
$0 < r < p^k$, at least one carry occurs (at the lowest nonzero
digit position of~$r$), so $p \mid \binom{p^k}{r}$.  For $r = p^{k-1}$,
exactly one carry occurs, so $v_p\binom{p^k}{p^{k-1}} = 1$.  Hence
$g(p^k) = p$.

\textbf{Case $m$ not a prime power.}
For each prime $p \mid m$, since $m$ is not a $p$-power, $m$ has at least
two nonzero base-$p$ digits.  Let $m_j > 0$ at a position $j$ that is not
the leading position.  By Lucas' theorem~\cite{Lucas1878}, taking $r = p^j$
gives $\binom{m}{p^j} \equiv m_j \pmod{p}$, with $0 < m_j < p$, so
$p \nmid \binom{m}{r}$.  For primes $p \nmid m$, $m$ is not a $p$-power,
so the same argument applies (the units digit of~$m$ in base~$p$ is
nonzero).  Hence $g(m) = 1$.
\end{proof}

\subsection{Relating the gcd to $S(n)$}

Combining the results of the previous subsection with the binomial gcd, we obtain the following.

\begin{remark}\label{rem:Cnr-parity}
By~\eqref{eq:vp-Cnr}, $v_p(C_{n,r}) \equiv v_p(E_m(r)) \pmod{2}$ for
each~$r$, and Lemma~\ref{lem:Egcd} shows that $E_m(r)$ does not
contribute any \emph{uniform} odd prime factor to the gcd of the
$C_{n,r}$.  However, this does not by itself determine the parity of
$v_p(\gcd_r C_{n,r})$, since a different $r$ could in principle achieve
a smaller valuation of opposite parity.  The definitive parity analysis
is carried out in~\S\ref{subsec:nonpp} and~\S\ref{subsec:pp} using
explicit coefficient witnesses.
\end{remark}

With the binomial gcd theorem and the parity observations in
Remark~\ref{rem:Cnr-parity} as motivation, we now turn to the heart of
the proof: the $p$-adic parity analysis that distinguishes the prime-power
case from the non-prime-power case.

%======================================================================
% SECTION 5: p-ADIC PARITY AND PROOF OF THE MAIN THEOREM
%======================================================================
\section{$p$-adic parity and proof of the main theorem}\label{sec:padic}

We now analyze the $p$-adic valuation of $S(n)$ for each prime~$p$,
treating three cases according to the relationship between $p$ and
$m = n+2$: primes dividing~$n$ (\S\ref{subsec:pdivn}), the non-prime-power
case (\S\ref{subsec:nonpp}), and the prime-power case (\S\ref{subsec:pp}).
We then assemble the proof of Theorem~\ref{thm:main} in~\S\ref{subsec:synthesis},
treat the prime~$2$ in~\S\ref{subsec:p2}, and record the prime-power case for
$p=3$ as well.

\subsection{Primes dividing $n$ contribute even valuation}\label{subsec:pdivn}

We first handle the case where the prime divides $n$ rather than $m$.

\begin{lemma}\label{lem:Qnmodp}
If $p$ is an odd prime with $p \mid n$, then every coefficient of
$Q_n$ is divisible by $p^2$.
\end{lemma}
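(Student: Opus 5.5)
The plan is to show that a single factor $N! = (2n)!$ divides every coefficient of $Q_n$ as an integer polynomial in the $f_j$, and then to check that $v_p((2n)!) \ge 2$ under the hypothesis $p \mid n$. No cancellation analysis should be needed: each individual term in the formulas of Lemma~\ref{lem:Qn} already carries the factor $N!$.

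First, for $A_n$ and $C_n$ I will simplify the scalar $\binom{N}{i}\alpha_i\beta_i$ appearing in \eqref{eq:An} and \eqref{eq:Cn}. Since $\alpha_i = (N+1-i)!\,i!$ and $\beta_i = (N-i)!\,(i+1)!$, dividing $\binom{N}{i} = N!/(i!\,(N-i)!)$ into the product gives
\[
\binom{N}{i}\alpha_i\beta_i = N!\,(N+1-i)!\,(i+1)!,
\]
which is an integer multiple of $N!$. This is the general-$i$ version of \eqref{eq:K-term}. The coefficient of a monomial $f_i f_{N-i}$ in $A_n$ is a signed sum of at most two such terms (from the indices $i$ and $N-i$), so it is again a multiple of $N!$. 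The same holds for $C_n$, including the diagonal index $i = N-i = n$.

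Second, for $B_n$ I will use the simplifications of $P_i = (-1)^i\binom{N}{i}\alpha_i^2$ and $R_i = (-1)^i\binom{N}{i}\beta_i^2$ recorded in the proof of Lemma~\ref{lem:Cnr}, both of which are visibly $N!$ times an integer. Alternatively, Corollary~\ref{cor:b-factored} gives the closed form $b_j = 2(-1)^j N!\,(M-2j)\,(M-j)!\,j!$, which exhibits the factor $N!$ directly. Either way, every coefficient of $B_n$ is divisible by $N!$.

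Finally, by Legendre's formula $v_p((2n)!) \ge \lfloor 2n/p \rfloor$. Since $p \mid n$ we have $n \ge p$, so $2n/p$ is an integer at least $2$, and hence $p^2 \mid (2n)!$. Concretely, $p$ and $2p$ are two distinct multiples of $p$ not exceeding $2n$. This completes the argument.

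The only step needing care is the factorial bookkeeping in the two displayed simplifications, in particular confirming that the binomial denominator $i!\,(N-i)!$ cancels exactly against factors of $\alpha_i\beta_i$ (respectively $\alpha_i^2$, $\beta_i^2$), leaving an integer multiple of $N!$. This is routine, so I expect no real obstacle. The hypothesis that $p$ is odd is not actually used: the argument only uses $p \mid n$.
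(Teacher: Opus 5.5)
Your proof is correct, but it takes a different route from the paper's.

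\textbf{The paper's route.} The paper never simplifies the binomial coefficient. By a case split on $i$ versus $p$, it shows that $p$ divides every $\alpha_i$ and every $\beta_i$. For example, if $i<p$ then $2n+1-i\ge 2n+2-p\ge p$, so $p\mid(2n+1-i)!$. Hence both linear forms $D_i$ and $E_i$ of Lemma~\ref{lem:Qn} vanish modulo $p$, and each product $D_iE_i$ vanishes modulo $p^2$. One factor of $p$ comes from each linear form.

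\textbf{Your route.} You cancel $\binom{N}{i}$ against the factorials to extract the uniform factor $N!=(2n)!$ from every term. You then get $p^2$ from the two multiples $p,2p\le 2n$.

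\textbf{What each buys.} Your argument proves something stronger and prime-independent: $(2n)!$ divides every coefficient of $Q_n$, so every coefficient has $p$-adic valuation at least $v_p((2n)!)$ for every prime $p$, which can be much larger than $2$. It also makes clear that the hypotheses ``$p\mid n$'' and ``$p$ odd'' matter only through $p\le n$. The cost is the factorial bookkeeping, though the paper records this anyway in \eqref{eq:K-term} and in the formulas for $P_i,R_i$. The paper's argument is more local and needs no cancellation. Its case split also works for every $p\le n+1$, including $p=n+1$, where $N!$ alone supplies only one factor of $p$. For example, with $n=2$ and $p=3$, $v_3(4!)=1$, yet $3$ still divides every $\alpha_i$ and $\beta_i$.
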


\begin{proof}
We show $p \mid \alpha_i$ and $p \mid \beta_i$ for every $0 \le i \le N$.
Since $p \mid n$ and $p$ is odd, $p \le n$.  For~$\alpha_i = (2n+1-i)!\,i!$:
if $i \ge p$ then $p \mid i!$; if $i < p$ then $2n+1-i \ge 2n+2-p \ge n+2 > p$,
so $p \mid (2n+1-i)!$.  Similarly for $\beta_i = (2n-i)!\,(i+1)!$.

Since $p \mid \alpha_i$ and $p \mid \beta_i$, each $D_i \equiv 0$ and
$E_i \equiv 0$ modulo~$p$, giving $D_i E_i \equiv 0 \pmod{p^2}$.
\end{proof}

\begin{remark}\label{rem:pdivn}
When $p \mid n$, Lemma~\ref{lem:Qnmodp} provides the stronger structural
information that $e_p \ge 2$ (i.e., every coefficient of $Q_n$ is
divisible by $p^2$, not just by $p^{e_p}$ for some $e_p \ge 1$).
The even-parity conclusion $v_p(S(n)) = 2e_p$ follows from
Proposition~\ref{prop:nonpp} below, which handles all non-prime-power
cases uniformly.
\end{remark}

\begin{remark}
For odd~$p$: $p \mid n$ implies $p \nmid m = n+2$, so the conditions
``$p \mid n$'' and ``$p \mid m$'' are disjoint.
\end{remark}

\subsection{The non-prime-power case}\label{subsec:nonpp}

We now show that $v_p(S(n))$ is even whenever $m$ is not a power of $p$.

\begin{remark}\label{rem:no-caseA}
One might hope to show $p \nmid S(n)$ when $p \nmid n(n+2)$, but this
is false in general: for instance, $v_3(S(2)) = 4$ even though
$3 \nmid 2 \cdot 4$.  The correct statement is that $v_p(S(n))$ is
\emph{even} for all primes $p$ with $m$ not a $p$-power, which is
proved below using the cancellation-free $b_k$ witness.
\end{remark}

\begin{lemma}\label{lem:bk}
For $1 \le k \le N$,
\begin{equation}\label{eq:bk}
b_k \defeq [f_k f_{N+1-k}]\,B_n = 2(-1)^k\,(N!)^2\,
\frac{(N+1-k)(N+1-2k)}{\binom{N}{k}}.
\end{equation}
This identity can be verified for specific values of~$n$ using the
companion notebook.
\end{lemma}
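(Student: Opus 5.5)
The plan is a direct coefficient extraction from the formula~\eqref{eq:Bn} for $B_n$ in Lemma~\ref{lem:Qn}, followed by a factorial rewrite. Since $N+1=2n+1$ is odd, $k \neq N+1-k$, so $f_kf_{N+1-k}$ is a product of two distinct indeterminates. Its coefficient collects every term of~\eqref{eq:Bn} whose index pair is $\{k,N+1-k\}$. There are exactly four such terms, and all four indices lie in $[0,N]$ precisely because $1\le k\le N$:
\begin{itemize}
\item from $\alpha_i^2 f_if_{N+1-i}$, the indices $i=k$ and $i=N+1-k$;
\item from $\beta_i^2 f_{i+1}f_{N-i}$, the indices $i=k-1$ and $i=N-k$.
\end{itemize}
In the notation of the proof of Lemma~\ref{lem:Cnr}, this gives $b_k = P_k + P_{N+1-k} + R_{k-1} + R_{N-k}$, where
\[
P_i = (-1)^i N!\,(N-i)!\,i!\,(N+1-i)^2,\qquad R_i=(-1)^iN!\,(N-i)!\,i!\,(i+1)^2 .
\]
These simplified forms come from $\binom{N}{i}\alpha_i^2$ and $\binom{N}{i}\beta_i^2$ by cancelling one $(N-i)!\,i!$ against the binomial coefficient.

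Next I evaluate the four terms, using that $N$ is even, so $(-1)^{N+1-k}=-(-1)^k$ and $(-1)^{N-k}=(-1)^k$. The terms pair up:
\begin{itemize}
\item $P_k$ and $R_{N-k}$ are both equal to $(-1)^kN!\,(N-k)!\,k!\,(N+1-k)^2$;
\item $P_{N+1-k}$ and $R_{k-1}$ are both equal to $-(-1)^kN!\,(k-1)!\,(N+1-k)!\,k^2$.
\end{itemize}
Summing and factoring out $2(-1)^kN!\,(N-k)!\,(k-1)!\,(N+1-k)\,k$ leaves the bracket $(N+1-k)-k=N+1-2k$. Hence
\[
b_k = 2(-1)^k N!\,(N+1-k)!\,k!\,(N+1-2k).
\]
This agrees with Corollary~\ref{cor:b-factored} when $M=N+1$, so one could alternatively just cite that corollary.

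Finally, I rewrite $(N+1-k)!\,k! = (N+1-k)\,(N-k)!\,k! = (N+1-k)\,N!/\binom{N}{k}$. This converts the expression above into~\eqref{eq:bk}.

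The only real obstacle is bookkeeping. I need to check that no other terms of~\eqref{eq:Bn} produce the monomial $f_kf_{N+1-k}$, which holds because each summand has a fixed index pair. I also need to track the signs, which hinge on the parity of $N$. The boundary cases $k=1$ and $k=N$ need a brief check that the indices $k-1=0$ and $N+1-k$ remain in range; they do.
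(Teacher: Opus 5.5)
Your proof is correct and takes essentially the paper's route: you extract the same four summands of~\eqref{eq:Bn} (indices $k$, $N+1-k$ for the $\alpha^2$-part and $k-1$, $N-k$ for the $\beta^2$-part), fix the signs using that $N$ is even, and simplify the factorials. The only difference is bookkeeping. The paper factors out the common $\alpha_k^2$ (using $\beta_{k-1}=\alpha_{N+1-k}=\beta_{N-k}=\alpha_k$) and combines the terms as $2(-1)^k\alpha_k^2\bigl[\binom{N}{k}-\binom{N}{k-1}\bigr]$, whereas you pair the pre-simplified $P_i,R_i$ terms directly.
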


\begin{proof}
The monomial $f_k f_{N+1-k}$ arises from four summands in~\eqref{eq:Bn}.
All four share the same squared factorial $\alpha_k^2$ (using
$\beta_{k-1} = \alpha_k$, $\alpha_{N+1-k} = \alpha_k$,
$\beta_{N-k} = \alpha_k$).  Since $N = 2n$ is even:
$(-1)^{N+1-k} = -(-1)^k$ and $(-1)^{N-k} = (-1)^k$.
Also $\binom{N}{N+1-k} = \binom{N}{k-1}$.
The four terms sum to
\[
b_k = 2(-1)^k \alpha_k^2\bigl[\binom{N}{k}-\binom{N}{k-1}\bigr]
    = 2(-1)^k \alpha_k^2 \binom{N}{k} \cdot \frac{N+1-2k}{N+1-k}.
\]
Simplifying $\alpha_k^2 \binom{N}{k}/(N+1-k) = (N+1-k)(N-k)!\,k!\,N!
= (N+1-k)(N!)^2/\binom{N}{k}$ gives~\eqref{eq:bk}.
\end{proof}

\begin{proposition}\label{prop:nonpp}
If $p \ge 5$ is an odd prime and $m = n+2$ is not a power of~$p$, then
$v_p(S(n))$ is even.
\end{proposition}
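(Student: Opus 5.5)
Set $e_p \defeq \min_{1\le k\le N} v_p(b_k)$, where $b_k$ is given by Lemma~\ref{lem:bk}. The plan is to prove $v_p(S(n)) = 2e_p$ by a matching upper and lower bound.

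\emph{Upper bound.} Fix $k\notin\{n,n+1\}$ and specialize $f_k=f_{N+1-k}=1$, all other $f_j=0$. Every monomial of $A_n$ has the form $f_if_{N-i}$, with index sum $N$. Since $k+(N+1-k)=N+1$, the only way $A_n$ could be nonzero is $k=N-k$, i.e.\ $k=n$. Similarly, every monomial of $C_n$ has index sum $N+2$, which forces $k=n+1$. Away from these two indices, $\Delta_n$ therefore specializes to $b_k^2$. Since $S(n)$ divides every specialization, $v_p(S(n))\le 2v_p(b_k)$.

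\emph{Lower bound.} I will show that $p^{e_p}$ divides every coefficient of $A_n$, $B_n$ and $C_n$; then $p^{2e_p}$ divides every coefficient of $\Delta_n$. For $B_n$, each coefficient is some $b_k$ (possibly $b_k+b_{N+1-k}$ in the symmetric pairing), so this is immediate. For $A_n$, by \eqref{eq:K-term} the coefficient of $f_if_{N-i}$ is a signed sum of terms $\binom{N}{i}\alpha_i\beta_i = N!\,(N+1-i)!\,(i+1)!$, or twice that. Write $b_k$ as $2(-1)^kN!\,(N+1-2k)(N+1-k)!\,k!$. Comparing $N!\,(N+1-i)!\,(i+1)!$ with $N!\,(N+1-k)!\,k!$ at $k=i+1$ gives
\[
(N+1-i)!\,(i+1)! = (N+1-i)\,(N-i)!\,(i+1)!,
\]
versus $(N-i)!\,(i+1)!\,(N-2i-1)$. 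So I need
\[
v_p\bigl((N+1-i)(N-i)!(i+1)!\bigr)\ \ge\ \min_k v_p(b_k).
\]
This is the absorption step: a factor $(N+1-i)$ divisible by $p$ can be traded for a neighbouring $b_{k'}$ via $\binom{N+1}{i+1}$-type identities, and $C_n$ is handled by the symmetry $i\mapsto N-i$. I expect this to be routine but fiddly.

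\emph{Key step: locating an off-centre minimizer.} The main work is showing that when $m$ is not a power of $p$, the minimum $e_p$ is attained at some $k_0\notin\{n,n+1\}$. Write
\[
v_p(b_k) = 2v_p(N!) + v_p(N+1-2k) + v_p(N+1-k) - v_p\binom{N}{k},
\]
and note $N+1-2k=M-2k$ with $M=2n+1=2m-3$.

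\emph{Case $p\nmid m$.} I would show that shifting from $k=n$ (or $n+1$) by a suitable amount $t$ does not increase the valuation. The candidate is $t$ equal to a small power of $p$, chosen so that $v_p(2m-3-2k)$ stays $0$, $v_p\binom{N}{k}$ does not drop, and $v_p(N+1-k)$ does not rise. This is where $p\ge5$ is used, so that the linear factors $N+1-2k$ and $N+1-k$ can be controlled simultaneously.

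\emph{Case $p\mid m$, $m$ not a $p$-power.} Here I would use the fact that $m$ has a nonzero non-leading base-$p$ digit (the proof of Lemma~\ref{lem:bingcd}). Let the offending index run through a complete residue system modulo $p^{j+1}$, where $j$ is that digit position. This should produce some $k_0$ with $v_p(b_{k_0})\le v_p(b_n),v_p(b_{n+1})$, using Kummer's theorem to compare carries in $k+(N-k)$.

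I expect this minimizer-location step to be the main obstacle. The factors $N+1-2k$ and $N+1-k$ interact with the carry count, and the argument must genuinely fail when $m=p^k$, so the case split has to be precise.

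Once $k_0$ is found, the two bounds combine to $v_p(S(n))=2e_p$, which is even.
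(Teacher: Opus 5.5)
Your architecture matches the paper's. You use the pair specialization $\phi_k$ for the upper bound and show that $p^{e_p}$ divides every coefficient of $A_n$, $B_n$, $C_n$ for the lower bound. You then need an off-centre index $k_0\notin\{n,n+1\}$ attaining $e_p$. But that last step, which you defer as ``the main obstacle'', is the actual content of the proposition, and the mechanisms you sketch for it do not hold up as stated.

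\textbf{Case $p\nmid m$.} No search over shifts is needed. By \eqref{eq:b-factored}, $b_{n-1}/b_n=-3m/n$, so $v_p(b_{n-1})-v_p(b_n)=v_p(3)+v_p(m)-v_p(n)\le 0$. This is exactly where $p\ge5$ is used. Also, $b_{n+1}=b_n$ is literally the same coefficient, so only $b_n$ needs to be beaten. The comparison works only on the total, not factor by factor. When $p\mid n+1$, $v_p\binom{N}{n-1}$ is strictly smaller than $v_p\binom{N}{n}$. What compensates is $v_p(N+1-k)$ falling from $v_p(n+1)$ to $v_p(m)=0$. So your termwise requirement that $v_p\binom{N}{k}$ not drop already fails for this natural witness.

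\textbf{Case $p\mid m$.} Your plan rests on $m$ having a nonzero non-leading base-$p$ digit. That is false for $m=cp^a$ with $2\le c\le p-1$, for example $p=5$, $m=10=(20)_5$. The same assertion in the proof of Lemma~\ref{lem:bingcd} is inaccurate for these $m$. What is actually needed is only $p^a<m$ for $a=v_p(m)$. Write $m=p^ar$ with $p\nmid r$ and $r\ge2$; the paper takes $k_0=n-p^a$. The factors $N+1-k_0=p^a(r+1)-1$ and $N+1-2k_0=2p^a+1$ are $p$-adic units. In
\[
\frac{\binom{N}{n}}{\binom{N}{n-p^a}}=\prod_{j=0}^{p^a-1}\frac{n+j+1}{n-j}
\]
the numerator factors and the denominator factors each form a complete residue system modulo $p^a$. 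Their unique multiples of $p^a$ are $p^ar$ and $p^a(r-1)$ respectively. Hence $v_p(b_{k_0})-v_p(b_n)=-v_p(r-1)\le 0$.

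\textbf{Lower bound.} Your sketch names the wrong obstruction: a factor of $p$ in $N+1-i$ only helps. Up to sign, divide the $A_n$-scalar $N!\,(N+1-i)!\,(i+1)!$ by $b_{i+1}$ and by $b_i$. The quotients are $(N+1-i)/\bigl(2(N-1-2i)\bigr)$ and $(i+1)/\bigl(2(N+1-2i)\bigr)$ respectively. So the real danger is $p$ dividing $N-1-2i$ or $N+1-2i$. These two numbers differ by $2$, so an odd $p$ divides at most one of them. Hence at least one of the two quotients has denominator prime to $p$ and is $p$-integral. This is precisely what the paper's pair of absorption identities encodes.

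With these ingredients supplied, your outline becomes the paper's proof. As written, however, the decisive steps are missing.
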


\begin{proof}
We give a uniform argument that works regardless of whether $p \mid n$,
$p \mid m$, or neither.
Recall from~\eqref{eq:bk} the cancellation-free coefficient
$b_k = 2(-1)^k(N!)^2(N+1-k)(N+1-2k)/\binom{N}{k}$.
Since $N = 2n$ is even, $N+1$ is odd, so $N+1-2k \ne 0$ for every
integer~$k$; likewise $N+1-k \ne 0$ for $k \le N$.  Hence $b_k \ne 0$
for all $1 \le k \le N$.

Define $e_p = \min_{1 \le k \le N} v_p(b_k)$.  We claim that every
coefficient of the polynomials $A_n$, $B_n$, $C_n$ (viewed as
elements of $\BZ[f_0,\dots,f_{2n+1}]$) has $p$-adic valuation at
least~$e_p$.  For~$B_n$ this follows from the cancellation-free
formula (Lemma~\ref{lem:bk}), since the polynomial coefficient of
$f_k f_{N+1-k}$ in~$B_n$ is exactly~$b_k$.

For~$A_n$ and~$C_n$ the argument is as follows.
Setting $\ell = n+1$ (a local abbreviation; note $\ell \ne m = n+2$),
the $\alpha\beta$-scalar at index~$i$ satisfies
$\binom{N}{i}\alpha_i\beta_i = (2\ell-2)!\,(2\ell-1-i)!\,(i+1)!$, so
\[
v_p\!\bigl(\tbinom{N}{i}\alpha_i\beta_i\bigr)
= v_p\!\bigl((2\ell-2)!\bigr)
  + v_p\!\bigl((2\ell-1-i)!\,(i+1)!\bigr)
= v_p\!\bigl((2\ell-2)!\bigr)
  + v_p(2\ell)! - v_p\tbinom{2\ell}{i+1}.
\]
The $b_k$-formula gives $v_p(b_k) = v_p\bigl((2\ell-2)!\bigr) + v_p(2\ell-1)!
- v_p\tbinom{2\ell-1}{k} + v_p\lvert 2\ell{-}1{-}2k\rvert$, so the claim
$v_p\bigl(\tbinom{N}{i}\alpha_i\beta_i\bigr) \ge e_p$ is equivalent to
\begin{equation}\label{eq:binom-ub}
v_p\tbinom{2\ell}{i+1}
\le v_p(\ell)
  + \max_{1 \le k \le 2\ell-2}
    \Bigl[v_p\tbinom{2\ell-1}{k} - v_p\lvert 2\ell{-}1{-}2k\rvert\Bigr].
\end{equation}

To prove~\eqref{eq:binom-ub}, write $j = i+1$ and apply the
absorption identity $\binom{2\ell}{j} = \frac{2\ell}{2\ell-j}\binom{2\ell-1}{j}
= \frac{2\ell}{j}\binom{2\ell-1}{j-1}$ in two ways (using $v_p(2\ell) = v_p(\ell)$
since $p$ is odd):
\begin{align}
v_p\tbinom{2\ell}{j} &= v_p(\ell) + v_p\tbinom{2\ell-1}{j} - v_p(2\ell-j),
  \label{eq:absorb1}\\
v_p\tbinom{2\ell}{j} &= v_p(\ell) + v_p\tbinom{2\ell-1}{j-1} - v_p(j).
  \label{eq:absorb2}
\end{align}
From~\eqref{eq:absorb1} with $k=j$, the bound~\eqref{eq:binom-ub}
holds whenever $v_p\lvert 2\ell{-}1{-}2j\rvert \le v_p(2\ell-j)$.
From~\eqref{eq:absorb2} with $k=j-1$, it holds whenever
$v_p\lvert 2\ell{+}1{-}2j\rvert \le v_p(j)$.
If both conditions failed, then $p^{v_p(2\ell-j)+1}$ would divide
$2\ell{-}1{-}2j$ and $p^{v_p(j)+1}$ would divide $2\ell{+}1{-}2j$.
But $(2\ell{+}1{-}2j)-(2\ell{-}1{-}2j)=2$, so the smaller of the two
prime powers would divide~$2$, contradicting the fact that $p$ is odd.
Hence at least one condition holds, establishing~\eqref{eq:binom-ub}.

Since $p^{e_p}$ divides every coefficient of $A_n$, $B_n$, and~$C_n$,
we obtain $\Delta_n = p^{2e_p}\Delta_n^{\#}$.

We exhibit a coefficient of $\Delta_n$ with $p$-adic valuation exactly
$2e_p$.  It suffices to find $k_0 \in \{1,\dots,N\} \setminus \{n,n+1\}$
with $v_p(b_{k_0}) = e_p$, because then the square monomial
$(f_{k_0}f_{N+1-k_0})^2$ appears in $\Delta_n$ with coefficient
$b_{k_0}^2$ and valuation $2e_p$ (see below for why $A_nC_n$ does not
contribute).

\emph{Existence of $k_0$ for $p \ge 5$.}
Using~\eqref{eq:b-factored} and the identity
$\binom{N}{n-1} = \binom{N}{n}\cdot n/(n+1)$, one computes
\begin{equation}\label{eq:bk-shift}
v_p(b_{n+j}) - v_p(b_n) = v_p\!\left(\textstyle\prod_{i=1}^{j}(n-i+1)\right)
 - v_p\!\left(\textstyle\prod_{i=1}^{j}(n+i+1)\right) + v_p(N+1-(n+j))
 + v_p(N+1-2(n+j)) - v_p(n+1)
\end{equation}
for $j \ge 1$ (and similarly for negative shifts).  For $j = -1$
(i.e., $k = n-1$), the formula simplifies to:
\[
v_p(b_{n-1}) - v_p(b_n) = v_p(m) - v_p(n) + v_p(3).
\]
For $p \ge 5$: $v_p(3) = 0$, and since $p$ is odd and $m = n+2$,
at most one of $n$ and $m$ is divisible by~$p$.

\emph{Case $p \nmid m$:} $v_p(m) = 0$, so $v_p(b_{n-1}) - v_p(b_n)
= -v_p(n) \le 0$.  Thus $v_p(b_{n-1}) \le v_p(b_n)$.
If the minimum $e_p$ is attained at $k=n$ or $k=n+1$, then this
inequality shows it is also attained at the off-centre index $k=n-1$.
Otherwise the minimum is already attained at some off-centre index.
In either case there exists $k_0 \notin \{n,n+1\}$ with
$v_p(b_{k_0}) = e_p$.

\emph{Case $p \mid m$:}
Write $a = v_p(m) \ge 1$ and $m = p^a r$ with $p \nmid r$ and $r \ge 2$
(since $m$ is not a $p$-power).
Set $k_0 = n - p^a$.  Since $p^a \le m/2 = (n+2)/2 < n$ (using
$r \ge 2$), we have $1 \le k_0 < n$, so $k_0 \in \{1,\dots,N\}
\setminus \{n,n+1\}$.

We claim $v_p(b_{k_0}) \le v_p(b_n)$.  From the closed
form~\eqref{eq:bk}, the ratio is
\[
\frac{b_{k_0}}{b_n}
= \frac{(n+1+p^a)(2p^a+1)}{(n+1) \cdot 1}
  \cdot \frac{\binom{N}{n}}{\binom{N}{n-p^a}}.
\]
Since $n+1 = p^ar - 1 \equiv -1$ and
$n+1+p^a = p^a(r+1)-1 \equiv -1 \pmod{p}$, and
$2p^a + 1 \equiv 1 \pmod{p}$, the linear factors contribute
$v_p = 0$.  For the binomial ratio, write
\[
\frac{\binom{N}{n}}{\binom{N}{n-p^a}}
= \prod_{j=0}^{p^a-1} \frac{n+j+1}{n-j}.
\]
Both the numerator set $\{n+1,\dots,n+p^a\}$ and the denominator
set $\{n-p^a+1,\dots,n\}$ form complete residue systems
modulo~$p^a$.  The unique multiple of~$p^a$ in the numerator is
$p^a r$, with $v_p = a + v_p(r) = a$.  The unique multiple of~$p^a$
in the denominator is $p^a(r-1)$, with $v_p = a + v_p(r-1)$.  All
other terms in both sets are non-multiples of~$p^a$ and contribute
equal $p$-adic valuation (since they share the same residues
modulo~$p^a$).  Hence
\[
v_p\!\left(\frac{\binom{N}{n}}{\binom{N}{n-p^a}}\right)
= v_p(r) - v_p(r-1) = -v_p(r-1) \le 0,
\]
giving $v_p(b_{k_0}) \le v_p(b_n)$.
Thus $e_p$ is achieved at the off-centre index~$k_0$.

Now consider the square monomial $(f_{k_0}f_{N+1-k_0})^2$ in
$\Delta_n = B_n^2 - 4A_nC_n$.
Since $k_0 \ne n$ and $k_0 \ne n+1$, we have $k_0 \ne N-k_0$
and $k_0 \ne N+1-k_0$ (as $N = 2n$), so the quartic monomial
$f_{k_0}^2 f_{N+1-k_0}^2$ cannot arise from $A_nC_n$: every term
of $A_nC_n$ is a product $f_if_{N-i}f_{j+1}f_{N+1-j}$ with $i \ne N-i$
(for $i \ne n$) and $j+1 \ne N+1-j$ (for $j \ne n$), so matching
$f_{k_0}^2$ requires $i = N-i = k_0$, i.e., $k_0 = n$.  Hence
\[
[(f_{k_0}f_{N+1-k_0})^2]\,\Delta_n = b_{k_0}^2,
\qquad
v_p(b_{k_0}^2) = 2e_p.
\]
Since every coefficient of $\Delta_n$ has $v_p \ge 2e_p$, and this
particular coefficient has $v_p = 2e_p$, we conclude $v_p(S(n)) = 2e_p$,
which is even.
\end{proof}

\subsection{The prime-power case}\label{subsec:pp}

Let $m=p^k$, $n=p^k-2$, $N=2p^k-4$.  Define
$e_p = \min\{v_p(c) : c \text{ a scalar coefficient of } Q_n\}$.
Then $Q_n = p^{e_p}Q_n^{\#}$ and $\Delta_n = p^{2e_p}\Delta_n^{\#}$.

\begin{lemma}\label{lem:central}
The $p$-adic content $e_p$ is attained by exactly the following
summands of~$Q_n$:
\begin{enumerate}[label=\textup{(\roman*)}]
\item the $A_n/C_n$ summand at $i = n$, with scalar $\binom{N}{n}\alpha_n\beta_n$;
\item the $\beta^2$-part of the $B_n$ summand at $i = n-1$, with scalar
$\binom{N}{n-1}\beta_{n-1}^2$;
\item both parts of the $B_n$ summand at $i = n$;
\item the $\alpha^2$-part of the $B_n$ summand at $i = n+1$, with scalar
$\binom{N}{n+1}\alpha_{n+1}^2$.
\end{enumerate}
Every other scalar coefficient of $A_n$, $B_n$, or $C_n$ has
$p$-adic valuation at least $e_p + 1$.
\end{lemma}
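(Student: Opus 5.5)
The plan is to put every summand scalar of $A_n$, $B_n$, $C_n$ into one common shape and then read off $p$-adic valuations from Kummer's theorem. Throughout, $m=p^k$ with $p$ odd, $n=p^k-2$, $N=2p^k-4$ and $N+1=2p^k-3$.

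\textbf{Step 1: a uniform shape for all summand scalars.} From the simplifications already recorded in the proof of Lemma~\ref{lem:Cnr}, we have $P_i=\pm N!\,(N-i)!\,i!\,(N+1-i)^2$ and $R_i=\pm N!\,(N-i)!\,i!\,(i+1)^2$. Similarly $\binom{N}{i}\alpha_i\beta_i=N!\,(N+1-i)!\,(i+1)!$. Hence, up to sign, every scalar is
\[
N!\cdot w\cdot t!\,(N+1-t)! \qquad (1\le t\le N+1).
\]
The pair $(t,w)$ depends on the summand as follows.
\begin{itemize}
\item For the $\alpha^2$-part at $i$: $t=N+1-i$ and $w=t$.
\item For the $\beta^2$-part at $i$: $t=i+1$ and $w=t$.
\item For the $A_n/C_n$ summand at $i$: $t=i+1$ and $w=N+2-t=N+1-i$.
\end{itemize}
Writing $t!\,(N+1-t)!=(N+1)!/\binom{N+1}{t}$, the valuation of a scalar equals
\[
\text{const}+v_p(w)-v_p\tbinom{N+1}{t}.
\]
Since $v_p(w)\ge 0$, it suffices to locate the maximum of $v_p\binom{N+1}{t}$ and then check $v_p(w)$ at the maximizers.

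\textbf{Step 2: Kummer's theorem.} In base $p$ we have $N+1=p^k+(p^k-3)$. Its digits are: lowest digit $p-3$, digits $p-1$ in positions $1,\dots,k-1$, and a leading $1$ in position $k$. This also holds for $p=3$, where the lowest digit is $0$.

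Adding $t$ and $s=N+1-t$ produces at most $k$ carries, because the sum has $k+1$ digits. A carry out of position $0$ forces $t_0+s_0=2p-3$ with $t_0,s_0\le p-1$, so $\{t_0,s_0\}=\{p-2,p-1\}$. A carry out of each higher position $j<k$, with incoming carry, forces $t_j+s_j+1=2p-1$, so $t_j=s_j=p-1$. Finally $t,s<p^k$.

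Hence the maximum $v_p\binom{N+1}{t}=k$ is attained exactly at $t\in\{p^k-1,\,p^k-2\}=\{n+1,n\}$. Every other $t$ has $v_p\binom{N+1}{t}\le k-1$.

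\textbf{Step 3: the weights at the maximizers.} At $t\in\{n,n+1\}$ we check which weights $w$ are $p$-adic units.
\begin{itemize}
\item For the $\alpha^2$ and $\beta^2$ parts, $w=t\in\{p^k-2,p^k-1\}$ is a unit.
\item For the $A_n/C_n$ summand, $w=N+2-t$ equals $p^k-1$ when $t=p^k-1$ (that is, $i=n$), which is a unit. It equals $p^k$ when $t=p^k-2$ (that is, $i=n-1$), which is not.
\end{itemize}
Therefore the minimal valuation $e_p$ is attained at the following summands, and only there.
\begin{itemize}
\item $A_n/C_n$ at $i=n$, giving (i).
\item $\beta^2$-parts with $i+1\in\{n,n+1\}$, i.e.\ $i\in\{n-1,n\}$.
\item $\alpha^2$-parts with $N+1-i\in\{n,n+1\}$, i.e.\ $i\in\{n,n+1\}$.
\end{itemize}
These are exactly items (ii)--(iv). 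Every other summand either has $t\notin\{n,n+1\}$, losing at least one carry, or has $v_p(w)\ge1$, so its valuation is at least $e_p+1$.

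\textbf{Step 4: from summands to polynomial coefficients.} This is the main obstacle: passing from summand scalars to the actual polynomial coefficients, where summands can combine on the same monomial. For example, $f_if_{N-i}$ collects the summands $i$ and $N-i$ in $A_n$, and $b_k$ collects four terms.

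I would argue that $e_p$ is indeed the minimum over genuine coefficients. The $A_n$ coefficient of $f_n^2$ comes from the single summand $i=n$, so it has valuation exactly $e_p$. Every coefficient is a $\BZ$-combination of summand scalars, each of valuation at least $e_p$, so every coefficient has valuation at least $e_p$. Any coefficient built only from non-listed summands has valuation at least $e_p+1$.

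The remaining check is that the listed $B_n$ pieces, which land on $f_{n-1}f_{n+1}$, $f_n^2$... wait, on the monomials $f_nf_{n+1}$ and $f_{n-1}f_{n+2}$ type pairings, are correctly attributed. I would verify this against Lemma~\ref{lem:bk} by computing $v_p(b_n)$ and $v_p(b_{n+1})$ directly, and by confirming via Step~2 that all other $b_k$ have larger valuation.
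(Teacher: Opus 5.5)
Your proof is correct, and it takes a different route from the paper's.

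\textbf{How the paper argues.} It treats the three families separately. It uses $\alpha_n=\beta_n$ to get the common central scalar $\binom{N}{n}\gamma^2$. It then asserts that moving off-centre pushes one factorial, such as $(N+1-i)!$ or $(i+1)!$, past $m=p^k$ and so gains a factor of~$p$. It leaves implicit that the complementary factorial shrinks at the same time. Making this rigorous for the $\alpha\beta$-family at $i=n\mp j$ requires comparing the blocks $\{p^k,\dots,p^k+j-1\}$ and $\{p^k-j,\dots,p^k-1\}$. Their valuations differ by $k-v_p(j)\ge 1$, and the other two families need the analogous check.

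\textbf{What your route buys.} Your normalization $N!\,w\,(N+1)!/\binom{N+1}{t}$ puts all three families on one scale with a single binomial coefficient. One Kummer carry count on $N+1=2p^k-3$ then delivers the minimum, the cross-family comparison, the exact list of minimizers, and the gap of at least one, all at once. It also explains the one asymmetry in the list: the $\alpha\beta$-summand at $i=n-1$ drops out because its weight is $w=p^k$. Your digit analysis is right, including the cases $p=3$ and $k=1$.

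\textbf{Step~4.} This is not a real obstacle. The lemma concerns summand scalars, and that is how it is used in Lemmas~\ref{lem:rank1} and~\ref{lem:deform}. Your observation that $f_n^2$ in $A_n$ comes from the single summand $i=n$ already shows that the two readings of $e_p$ agree.

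Your hesitation at the end comes from a misattribution. The $\alpha^2$-part at $i$ lands on $f_if_{N+1-i}$, and the $\beta^2$-part lands on $f_{i+1}f_{N-i}$. So all four pieces in (ii)--(iv) land on the single monomial $f_nf_{n+1}$; they are exactly the four terms of $b_n=b_{n+1}$. Nothing listed lands on $f_{n-1}f_{n+2}$.

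If you want the polynomial-level statement, Lemma~\ref{lem:bk} gives $b_n=2(-1)^nN!\,(n!)^2(n+1)$. Since $n+1=p^k-1$ is a unit, this has valuation exactly $e_p$, so the four central pieces do not cancel.
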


\begin{proof}
Since $\alpha_n = \beta_n = (n+1)!\,n! =: \gamma$, the summand at $i = n$ has
valuation $v_p(\binom{N}{n}\gamma^2)$ in all three families.  We check that
this is the global minimum.

For the $\alpha\beta$-family (governing $A_n$ and $C_n$): the scalar is
$\binom{N}{i}\alpha_i\beta_i = N!\,(N+1-i)!\,(i+1)!$.  At $i = n$, both
auxiliary factorials equal $(n+1)!$.  For $|i-n| \ge 1$, one of
$N+1-i$ or $i+1$ exceeds $n+1 = p^k-1$, so either $(N+1-i)!$ or
$(i+1)!$ contains a factor of $m = p^k$, increasing the valuation.
(Precisely: if $i \le n-1$ then $N+1-i \ge n+2 = m$, and if
$i \ge n+1$ then $i+1 \ge n+2 = m$.)  Thus the $\alpha\beta$-minimum
is attained only at $i = n$.

For the $\alpha^2$-family (one part of $B_n$): the scalar is
$\binom{N}{i}\alpha_i^2 = N!\,(N+1-i)^2(N-i)!\,i!$.  At $i = n$
and $i = n+1$, one verifies $v_p = e_p$; for $|i-n| \ge 2$ or $i \le n-1$
the extra factor of $m$ in $(N+1-i)!$ or $i!$ forces $v_p \ge e_p + 1$.

For the $\beta^2$-family: the scalar is
$\binom{N}{i}\beta_i^2 = N!\,(i+1)^2(N-i)!\,i!$.  By the same analysis
with $i$ and $N-i$ exchanged, the minimum is attained at $i = n$ and $i = n-1$.

Combining: the global minimum $e_p$ is achieved precisely at the four
summand types listed in the statement, and all others have valuation at
least $e_p + 1$.
\end{proof}

\begin{lemma}\label{lem:rank1}
$Q_n^{\#} \equiv \lambda\,(f_n x - f_{n+1}z)^2 \pmod{p}$ with
$p \nmid \lambda$.  More precisely, $Q_n^{\#}$ has rank~$1$
modulo~$p$, so its discriminant vanishes:
$p \mid \cont(\Delta_n^{\#})$.
\end{lemma}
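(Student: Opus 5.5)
By Lemma~\ref{lem:central}, reducing $Q_n^{\#}=p^{-e_p}Q_n$ modulo~$p$ leaves only the summands listed there. The plan is to write the surviving terms down explicitly and check that they assemble into $\lambda(f_nx-f_{n+1}z)^2$.

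Set $\gamma=\alpha_n=\beta_n=(n+1)!\,n!$ and $c=\binom{N}{n}\gamma^2$, so that $v_p(c)=e_p$. Reading off from~\eqref{eq:An}--\eqref{eq:Cn}, the surviving terms are as follows.
\begin{itemize}
\item $A_n$ contributes $(-1)^n c\, f_n^2$ (summand $i=n$, since $N-n=n$).
\item $C_n$ contributes $(-1)^n c\, f_{n+1}^2$ (since $N+1-n=n+1$).
\item $B_n$ contributes from $i=n$ both parts, $(-1)^n c\,(f_nf_{n+1}+f_{n+1}f_n)=2(-1)^n c\, f_nf_{n+1}$. It also contributes the $\beta^2$-part at $i=n-1$, namely $(-1)^{n-1}\binom{N}{n-1}\beta_{n-1}^2 f_nf_{n+1}$, and the $\alpha^2$-part at $i=n+1$, namely $(-1)^{n+1}\binom{N}{n+1}\alpha_{n+1}^2 f_{n+1}f_n$.
\end{itemize}

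The key computation is to compare these two extra scalars with $c$ modulo $p^{e_p+1}$. We have $\beta_{n-1}=(n+1)!\,n!=\gamma$ and $\binom{N}{n-1}=\binom{N}{n}\cdot\frac{n}{n+1}$, so the $i=n-1$ term equals $-(-1)^n c\,\frac{n}{n+1}\,f_nf_{n+1}$. By symmetry ($\alpha_{n+1}=\gamma$ and $\binom{N}{n+1}=\binom{N}{n-1}$), the $i=n+1$ term is the same. Dividing by $p^{e_p}$ and reducing, the $f_nf_{n+1}$ coefficient of $B_n^{\#}$ becomes
\[
(-1)^n c'\Bigl(2-\frac{2n}{n+1}\Bigr)=(-1)^n c'\,\frac{2}{n+1},
\]
where $c'=c/p^{e_p}$ is a $p$-adic unit. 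Now $n+1=p^k-1\equiv-1\pmod p$ and $p\nmid n(n+1)$ (since $n\equiv-2$), so these fractions make sense in $\BZ_{(p)}$. The coefficient therefore reduces to $-2(-1)^nc'$. Hence
\[
Q_n^{\#}\equiv(-1)^nc'\,(f_n^2x^2-2f_nf_{n+1}xz+f_{n+1}^2z^2)=\lambda(f_nx-f_{n+1}z)^2\pmod p,
\]
with $\lambda=(-1)^nc'$ and $p\nmid\lambda$.

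For the discriminant, write $A_n^{\#}\equiv\lambda f_n^2$, $B_n^{\#}\equiv-2\lambda f_nf_{n+1}$ and $C_n^{\#}\equiv\lambda f_{n+1}^2$ as congruences of integer polynomials modulo~$p$. Then
\[
\Delta_n^{\#}=(B_n^{\#})^2-4A_n^{\#}C_n^{\#}\equiv4\lambda^2f_n^2f_{n+1}^2-4\lambda^2f_n^2f_{n+1}^2=0
\]
coefficientwise, so $p\mid\cont(\Delta_n^{\#})$.

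The main obstacle is bookkeeping signs and exact identifications: confirming which index pairs produce $f_nf_{n+1}$ in~\eqref{eq:Bn}, and that each of the two neighbouring summands carries exactly $n/(n+1)$ times $c$ with the sign opposite to the centre term. It would be safest to recheck this against $b_{n+1}$ from Lemma~\ref{lem:bk}. With $k=n+1$ that formula gives $b_{n+1}=2(-1)^{n+1}(N!)^2\,n\cdot(-1)/\binom{N}{n+1}$, and one verifies it equals $2(-1)^nc/(n+1)$, consistent with the computation above. Since the whole $f_nf_{n+1}$ coefficient is the single cancellation-free number $b_{n+1}$, this gives an independent check.
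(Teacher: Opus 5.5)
Your proof is correct and follows the paper's argument: both use Lemma~\ref{lem:central} to reduce modulo $p^{e_p+1}$ to the central summands $i\in\{n-1,n,n+1\}$, obtain $A_n^{\#}\equiv\lambda f_n^2$, $C_n^{\#}\equiv\lambda f_{n+1}^2$ and $B_n^{\#}\equiv\frac{2\lambda}{n+1}f_nf_{n+1}$, and then conclude using $n+1\equiv-1\pmod p$. Your explicit computation $2-\frac{2n}{n+1}=\frac{2}{n+1}$ supplies the step the paper states without derivation, and your exact cross-check against $b_{n+1}$ from Lemma~\ref{lem:bk} confirms it.
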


\begin{remark}
The fact that $Q_n^\sharp$ has rank~$1$ modulo~$p$ can be understood
representation-theoretically: after reducing modulo~$p$, only the central
block of the transvectant sum (indices $i \in \{n-1, n, n+1\}$)
survives by Lemma~\ref{lem:central}, and this block is forced to be a
perfect square by the equality $\alpha_n = \beta_n$ that holds at the
central index.  See~\cite{FultonHarris1991} for background on the
representation theory of~$\SL_2$.
\end{remark}

\begin{proof}
By Lemma~\ref{lem:central}, only $i \in \{n-1,n,n+1\}$ contribute
modulo~$p$.  Set $\gamma = (n+1)!n!$.

At $i = n$: $\alpha_n = \beta_n = \gamma$, so the summand contributes
$(-1)^n\binom{N}{n}\gamma^2(f_nx+f_{n+1}z)^2$.

At $i = n \pm 1$: since $\alpha_{n-1} = (n+2)!(n-1)!$ has extra factor
$p^k$ from $(n+2)!$, while $\beta_{n-1} = \gamma$, the $\alpha$-terms
vanish modulo $p \cdot p^{e_p}$.  So $D_{n-1} \equiv \gamma f_n z$ and
$E_{n-1} \equiv \gamma f_{n+1} x$; similarly $D_{n+1} \equiv \gamma f_{n+1} x$
and $E_{n+1} \equiv \gamma f_n z$.  These contribute only to the
$xz$~coefficient.

Setting $\lambda_A = (-1)^n\binom{N}{n}\gamma^2/p^{e_p}$ (with
$p \nmid \lambda_A$), we obtain
$A_n^{\#} \equiv \lambda_A f_n^2$, \;
$C_n^{\#} \equiv \lambda_A f_{n+1}^2$, \; and
$B_n^{\#} \equiv \frac{2\lambda_A}{n+1} f_n f_{n+1} \equiv -2\lambda_A f_n f_{n+1}\pmod{p}$,
where the last step uses $n+1 = p^k-1 \equiv -1\pmod{p}$.
Thus $Q_n^{\#} \equiv \lambda_A(f_nx - f_{n+1}z)^2\pmod{p}$.

The discriminant satisfies
\[
(B_n^{\#})^2 - 4A_n^{\#}C_n^{\#}
\equiv 4\lambda_A^2 f_n^2f_{n+1}^2 - 4\lambda_A^2 f_n^2f_{n+1}^2
= 0 \pmod{p}.
\]
Hence $Q_n^{\#}$ has rank~$1$ modulo~$p$.
\end{proof}

\begin{corollary}\label{cor:rank1disc}
$p \mid \cont(\Delta_n^{\#})$.  In particular, $v_p(S(n)) \ge 2e_p + 1$.
\end{corollary}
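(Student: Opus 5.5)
The corollary is almost immediate from Lemma~\ref{lem:rank1}. The plan is to make the passage from ``rank~$1$ modulo~$p$'' to a divisibility statement on the content of $\Delta_n^{\#}$, and then on $S(n)$, explicit.

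First, work in $\BZ[f_0,\dots,f_{2n+1}]$. Write $Q_n = p^{e_p}Q_n^{\#}$ with $Q_n^{\#} = A_n^{\#}x^2 + B_n^{\#}xz + C_n^{\#}z^2$. By the definition of $e_p$, the polynomials $A_n^{\#}, B_n^{\#}, C_n^{\#}$ have integer coefficients. Expanding $\Delta_n = B_n^2 - 4A_nC_n$ gives the exact identity
\[
\Delta_n = p^{2e_p}\bigl((B_n^{\#})^2 - 4A_n^{\#}C_n^{\#}\bigr) = p^{2e_p}\Delta_n^{\#},
\]
with $\Delta_n^{\#}$ having integer coefficients.

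Next, reduce modulo~$p$, i.e.\ apply the ring homomorphism $\BZ[f] \to \BF_p[f]$. Lemma~\ref{lem:rank1} gives
\[
A_n^{\#} \equiv \lambda_A f_n^2, \qquad B_n^{\#} \equiv -2\lambda_A f_nf_{n+1}, \qquad C_n^{\#} \equiv \lambda_A f_{n+1}^2
\]
as polynomial congruences, meaning coefficientwise. Since reduction is a ring homomorphism, the image of $\Delta_n^{\#}$ in $\BF_p[f]$ is $4\lambda_A^2f_n^2f_{n+1}^2 - 4\lambda_A^2f_n^2f_{n+1}^2 = 0$. A polynomial vanishing in $\BF_p[f]$ means every integer coefficient is divisible by~$p$, so $p \mid \cont(\Delta_n^{\#})$.

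Finally, $S(n) = \cont(\Delta_n) = p^{2e_p}\cont(\Delta_n^{\#})$, because scaling by $p^{2e_p}$ multiplies every coefficient, hence the gcd, by $p^{2e_p}$. Therefore $v_p(S(n)) = 2e_p + v_p(\cont(\Delta_n^{\#})) \ge 2e_p + 1$.

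The only point needing care is that the congruences in Lemma~\ref{lem:rank1} hold coefficientwise for the generic polynomials, not merely under specializations. This rests on Lemma~\ref{lem:central}, which shows that every non-central scalar coefficient of $A_n, B_n, C_n$ has valuation at least $e_p+1$, so these coefficients vanish in $Q_n^{\#}$ modulo~$p$. I would check this step explicitly; the rest is formal.
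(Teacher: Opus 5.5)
Your proposal is correct and follows the paper's route. The paper reads the corollary off directly from Lemma~\ref{lem:rank1}: its coefficientwise congruences, which rest on Lemma~\ref{lem:central}, force $(B_n^{\#})^2-4A_n^{\#}C_n^{\#}\equiv 0 \pmod p$, and then $S(n)=p^{2e_p}\cont(\Delta_n^{\#})$ gives $v_p(S(n))\ge 2e_p+1$. You spell out explicitly the two steps the paper leaves implicit: that the congruences hold for the generic polynomials, and how the content scales.
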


\subsubsection*{First-order deformation}

Set $t = p^{k-1}$.  For $k \ge 2$, consider the specialization
\begin{equation}\label{eq:spec}
f_n = 1, \quad f_{n+1} = -1, \quad f_{n-t} = s, \quad f_{n+t+1} = 1,
\quad \text{others } 0.
\end{equation}
For $k = 1$ ($t = 1$, $m = p$, $n = p-2$):
\begin{equation}\label{eq:speck1}
f_{n-1} = s, \quad f_n = 1, \quad f_{n+1} = -1, \quad f_{n+2} = 1,
\quad \text{others } 0.
\end{equation}

\begin{lemma}\label{lem:offcentre}
For $k \ge 2$,
$v_p(\binom{N}{n-t}\alpha_{n-t}^2) = e_p + 1$.
\end{lemma}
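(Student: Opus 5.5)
The plan is to compute the valuation directly from the factorial form of the scalar used in the proof of Lemma~\ref{lem:central}. There it is shown that
\[
\binom{N}{i}\alpha_i^2 = N!\,(N+1-i)^2\,(N-i)!\,i! = N!\,(N+1-i)\,(N+1-i)!\,i!.
\]
By Lemma~\ref{lem:central}(iii), $e_p$ is attained at $i=n$, where the scalar is $N!\,(n+1)\,(n+1)!\,n!$. So it suffices to show that the ratio of the scalar at $i=n-t$ to the scalar at $i=n$ has $p$-adic valuation exactly~$1$.

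At $i=n-t$ we have $N+1-i = n+t+1$, so the ratio is
\[
\frac{n+t+1}{n+1}\cdot\frac{(n+t+1)!}{(n+1)!}\cdot\frac{(n-t)!}{n!}.
\]
I will evaluate each of the three factors using $n=p^k-2$ and $t=p^{k-1}$ with $k\ge 2$.

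\emph{Linear factors.} We have $n+1=p^k-1\equiv -1$ and $n+t+1=p^k+p^{k-1}-1\equiv -1 \pmod p$ (this uses $k\ge2$). Hence the first factor contributes valuation~$0$.

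\emph{Numerator product.} $(n+t+1)!/(n+1)!$ is the product of the integers $p^k, p^k+1,\dots,p^k+t-1$. The term $p^k$ contributes~$k$. For $1\le j\le t-1<p^k$ we have $v_p(p^k+j)=v_p(j)$. So this factor has valuation $k+v_p((t-1)!)$.

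\emph{Denominator product.} $n!/(n-t)!$ is the product of $p^k-j$ for $2\le j\le t+1$. Since $t+1<p^k$, each term satisfies $v_p(p^k-j)=v_p(j)$, so the total valuation is $v_p((t+1)!)$. Because $t+1\equiv 1\pmod p$, this equals $v_p(t!)=k-1+v_p((t-1)!)$.

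Subtracting the denominator valuation from the numerator valuation gives $k-(k-1)=1$. Therefore $v_p\bigl(\binom{N}{n-t}\alpha_{n-t}^2\bigr)=e_p+1$.

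The only delicate step is the bookkeeping for the shifted products. One must check that every index $j$ stays strictly below $p^k$, which is what makes $v_p(p^k\pm j)=v_p(j)$ valid. One must also check that $k\ge 2$ forces $t\equiv 0\pmod p$, which is what gives $v_p(n+t+1)=0$ and $v_p(t+1)=0$. For $k=1$ both of these fail, which is why the specialization~\eqref{eq:speck1} is treated separately.
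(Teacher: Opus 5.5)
Your argument is correct, and it takes a different route from the paper.

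The paper works through the full coefficient $b_{n-t}$ of Lemma~\ref{lem:bk}. It first checks that the linear factors $n+t+1=p^k+p^{k-1}-1$ and $2t+1=2p^{k-1}+1$ are $p$-adic units, so $b_{n-t}$ and the summand have the same valuation. It then uses Kummer's theorem: adding $n-t$ and $n+t$ in base~$p$ produces one fewer carry than adding $n$ to itself, because the carry chain breaks at position $k-1$. This gives $v_p\binom{N}{n-t}=v_p\binom{N}{n}-1$.

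You instead divide the $\alpha^2$-scalar at $i=n-t$ by the central one. You then evaluate the two shifted factorial products term by term via $v_p(p^k\pm j)=v_p(j)$ for $0<j<p^k$. This is the same residue-system bookkeeping the paper uses in the $p\mid m$ case of Proposition~\ref{prop:nonpp} and in Case~B of Proposition~\ref{prop:p2}.

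Your version is elementary, avoids Kummer entirely, and treats exactly the single summand named in the statement. The paper's version has two advantages:
\begin{itemize}
\item it lands directly on $b_{n-t}$, which is what Lemma~\ref{lem:deform} actually uses, since under~\eqref{eq:spec} the $s$-coefficient of $B_n$ is the whole four-term sum $b_{n-t}$;
\item it explains the drop by one conceptually, as a broken carry.
\end{itemize}
To make your computation serve that lemma as well, you need only add that $2t+1\equiv 1\pmod p$.

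One correction to your closing remark. For $k=1$ the factor $t+1=2$ is still a unit for odd $p$. Only $v_p(n+t+1)=0$ breaks, since $n+t+1=p$, and this pushes the summand at $i=n-1$ to valuation $e_p+2$. So the statement genuinely fails at $k=1$. Even so, $v_p(b_{n-1})=e_p+1$ there for $p\ge 5$, because the factor $3/p$ relating $b_{n-1}$ to this summand is no longer a unit.
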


\begin{proof}
The $B_n$ coefficient $b_{n-t}$ has
$v_p(n+t+1) = v_p(p^k+p^{k-1}-1) = 0$ and
$v_p(2t+1) = v_p(2p^{k-1}+1) = 0$ (for $k \ge 2$).

A carry analysis via Kummer's theorem shows that
$v_p\binom{N}{n-t} = v_p\binom{N}{n} - 1$: the carry chain breaks at
position $k-1$ when adding $(n-t)+(n+t) = N$, losing one carry compared
to $n+n = N$.  Hence $v_p(b_{n-t}) = 2v_p(N!) - (c_p - 1) = e_p + 1$.
\end{proof}

\begin{lemma}\label{lem:deform}
For $k \ge 2$, or for $k = 1$ with $p \ge 5$: under the specialization,
$B^{\#} \equiv 2u + p\mu s \pmod{p^2}$ and
$A^{\#} \equiv C^{\#} \equiv u \pmod{p}$, where $p \nmid u\mu$.
\end{lemma}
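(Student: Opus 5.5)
The plan is to compute $A_n$, $B_n$, $C_n$ exactly under the specialization, by listing which index pairs survive. Under \eqref{eq:spec} (and \eqref{eq:speck1}, which is the same pattern with $t=1$), the only nonzero coordinates are $f_n=1$, $f_{n+1}=-1$, $f_{n-t}=s$ and $f_{n+t+1}=1$.

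\emph{Surviving pairs.} By Lemma~\ref{lem:Qn}, $A_n$ only involves products $f_if_{N-i}$, i.e.\ index pairs summing to $2n$. Similarly $C_n$ involves pairs summing to $2n+2$, and $B_n$ pairs summing to $2n+1$. Among the indices $\{n-t,\,n,\,n+1,\,n+t+1\}$ the possibilities are:
\begin{itemize}
\item[] For $A_n$ (sum $2n$): the pair $(n,n)$, and, only when $t=1$, also $(n-1,n+1)$.
\item[] For $C_n$ (sum $2n+2$): the pair $(n+1,n+1)$, and, only when $t=1$, also $(n,n+2)$.
\item[] For $B_n$ (sum $2n+1$): exactly the pairs $(n,n+1)$ and $(n-t,n+t+1)$.
\end{itemize}
Indeed the remaining sums $2n+1-t$, $2n+t+1$, $2n-2t$, $2n+2t+2$ never equal $2n$ or $2n+2$ unless $t=1$.

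\emph{The $A$ and $C$ coefficients.} The central coefficient in $A_n$ and in $C_n$ is the same number,
\[
a \defeq (-1)^n\tbinom{N}{n}\alpha_n\beta_n = (-1)^n N!\,((n+1)!)^2,
\]
with $v_p(a)=e_p$ by Lemma~\ref{lem:central}. The extra $t=1$ contributions come from the $\alpha\beta$-family at the non-central indices $i=n\pm1$. By Lemma~\ref{lem:central} these have valuation at least $e_p+1$, so they vanish modulo $p$ after dividing by $p^{e_p}$. Hence $A^{\#}\equiv C^{\#}\equiv a/p^{e_p}\pmod p$.

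\emph{The $B$ coefficient.} Here $B = -b_n + s\,b_{n-t}$, with $b_k$ as in Lemma~\ref{lem:bk}. Evaluating \eqref{eq:bk} at $k=n$ gives
\[
b_n = 2(-1)^n N!\,(n!)^2(n+1) = \frac{2a}{n+1}
\]
as an exact identity. Since $n+1=p^k-1$,
\[
-b_n = 2a\,(1-p^k)^{-1}.
\]
I will not try to identify $u$ with $a/p^{e_p}$ modulo $p^2$, because for $k=1$ the factor $(1-p)^{-1}\equiv 1+p \pmod{p^2}$ spoils that. Instead I define
\[
u \defeq -b_n/(2p^{e_p}).
\]
This is an integer, since $b_n$ is even and $p$ is odd. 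It gives $-b_n/p^{e_p}=2u$ exactly, and $u\equiv a/p^{e_p}\pmod p$ because $(1-p^k)^{-1}\equiv1\pmod p$. Consequently $A^{\#}\equiv C^{\#}\equiv u\pmod p$ and $p\nmid u$.

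\emph{The perturbation term.} It remains to show $v_p(b_{n-t})=e_p+1$ exactly; then $b_{n-t}/p^{e_p}=p\mu$ with $p\nmid\mu$. For $k\ge2$ this is Lemma~\ref{lem:offcentre}, combined with the fact that $v_p(b_n)=e_p$ (from $b_n=2a/(n+1)$ and $p\nmid n+1$). For $k=1$ I use the shift formula from the proof of Proposition~\ref{prop:nonpp}:
\[
v_p(b_{n-1})-v_p(b_n)=v_p(m)-v_p(n)+v_p(3) = 1 .
\]
This uses $m=p$, $p\nmid n=p-2$, and, crucially, $p\ge5$ so that $v_p(3)=0$; this is exactly why the case $p=3$, $k=1$ is excluded. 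Putting this together, $B^{\#}=2u+p\mu s$ exactly, and in particular modulo $p^2$.

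\emph{Main obstacle.} The delicate step is the valuation bookkeeping. Once $u$ is normalized via $b_n$ rather than via $A^{\#}$, the congruence for $B^{\#}$ is an exact identity, so everything rests on certifying $v_p(b_{n-t})=e_p+1$ exactly, not merely $\ge e_p+1$. For $k=1$ I would double-check the shift formula directly from \eqref{eq:bk}: the ratio $b_{n-1}/b_n$ equals
\[
\frac{(n+2)\cdot 3\cdot \binom{N}{n}}{(n+1)\binom{N}{n-1}} = \frac{3(n+2)}{n},
\]
whose $p$-adic valuation is $1$ when $m=n+2=p\ge5$.
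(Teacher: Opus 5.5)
Your proof is correct, and it uses the same ingredients as the paper's. The central summands of Lemma~\ref{lem:central} produce $u$. The parameter $s$ enters $B$ only through $b_{n-t}$, whose valuation is exactly $e_p+1$: by Lemma~\ref{lem:offcentre} when $k\ge 2$, and by $v_p(3)=0$ when $k=1$.

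Where you genuinely differ is in how $u$ is normalized. The paper takes $u=\lambda=a/p^{e_p}$ from Lemma~\ref{lem:rank1}, and it only checks $B^{\#}\equiv 2u\pmod p$ at $s=0$. The $s$-free part of $B^{\#}$ is exactly $2\lambda(1-p^k)^{-1}$, so this gives the stated congruence modulo $p^2$ when $k\ge 2$. For $k=1$, however, it is $2\lambda(1+p)$ modulo $p^2$. Keeping $u=\lambda$ there, the claim fails at $s=0$. The paper's one-line $k=1$ paragraph addresses neither this nor the extra monomials $f_{n-1}f_{n+1}$ in $A_n$ and $f_nf_{n+2}$ in $C_n$.

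Your choice $u=-b_n/(2p^{e_p})$ makes $B^{\#}=2u+p\mu s$ an exact identity for every $k$. Your enumeration of surviving index pairs also disposes of those extra $t=1$ terms. There are two harmless slips: the ratio is $b_{n-1}/b_n=-3(n+2)/n$, and your list of pair sums omits $2n-t$ and $2n+t+2$, which never hit $2n$, $2n+1$ or $2n+2$ anyway.

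One caution concerns how the lemma is used afterwards. Proposition~\ref{prop:pp} replaces $4A^{\#}C^{\#}$ by $4u^2$ modulo $p^2$, and that needs more than the mod-$p$ congruence stated here.
\begin{itemize}
\item For $k\ge 2$ this is fine with either normalization, since $A^{\#}=C^{\#}=\lambda$ exactly and $u\equiv\lambda\pmod{p^2}$.
\item For $k=1$ it is not. Writing $\hat a=a/p^{e_p}$, your exact formulas give $A^{\#}C^{\#}\equiv\hat a^2(1+2p-2ps)$, whereas $u^2\equiv\hat a^2(1+2p)\pmod{p^2}$.
\end{itemize}
Since you already have $A$, $B$, $C$ exactly, the clean route in the $k=1$ case is to expand the discriminant directly. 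This gives $\Delta_n^{\#}\equiv-4\hat a^2ps\pmod{p^2}$, which still has valuation $1$ at $s=1$ when $p\ge 5$.
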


\begin{proof}
\textbf{Case $k \ge 2$.}
At $s=0$: with $f_n = 1$ and $f_{n+1} = -1$,
Lemma~\ref{lem:rank1} gives $Q_n^{\#} \equiv \lambda(x+z)^2$ (mod~$p$),
so $u=\lambda$ and $B^\# \equiv 2u \pmod{p}$.
The parameter $s = f_{n-t}$ enters through indices $i \in \{n-t, n+t, n+t+1\}$.
All three contribute only to $B_n$ (the $xz$ coefficient), at valuation
$e_p + 1$.  By Lemma~\ref{lem:offcentre}, the sum divided by $p^{e_p+1}$
is a $p$-adic unit~$\mu$.

\textbf{Case $k = 1$, $p \ge 5$.}
Here $t = 1$, and $b_{n-1}$ has $v_p(N+1-2(n-1)) = v_p(3) = 0$, so
$v_p(b_{n-1}) = e_p + 1$ and $\mu \ne 0$ mod~$p$.

\textbf{Case $k = 1$, $p = 3$.}
This case is excluded from the present lemma and handled separately
below as a base case for Proposition~\ref{prop:pp}.
\end{proof}

\begin{remark}[Base case $m = 3$]\label{rem:base-case-3}
When $p = 3$ and $k = 1$, we have $n = 1$ and $Q_1 = -2H(F)$, where
$H(F)$ is the classical Hessian of a binary cubic.  The
identity~\eqref{eq:dischess} gives $\Delta_1 = 4\disc(H(F)) = -12\disc(F)$.
Since $\disc(F)$ is a primitive polynomial in the $f_j$ (its content is~$1$),
and $v_3(12) = 1$, it follows that $S(1) = \cont(\Delta_1) = 12 \cdot
\cont(\disc(F)) \cdot (\text{power of }2)$.
Direct computation gives $S(1) = 192 = 2^6 \cdot 3$, so
$v_3(S(1)) = 1$, which is odd.  This confirms
Proposition~\ref{prop:pp} in this case.
\end{remark}

\begin{proposition}\label{prop:pp}
When $m = p^k$ ($p$ odd, $k \ge 1$),
$v_p(\cont(\Delta_n^{\#})) = 1$.
\end{proposition}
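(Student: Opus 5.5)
The plan is to sandwich $v_p(\cont(\Delta_n^{\#}))$ between $1$ and $1$. The lower bound $\ge 1$ is exactly Corollary~\ref{cor:rank1disc}: by Lemma~\ref{lem:rank1}, $Q_n^{\#}$ is congruent modulo~$p$ to $\lambda(f_nx-f_{n+1}z)^2$. Its discriminant therefore vanishes identically modulo~$p$, so $p$ divides every coefficient of $\Delta_n^{\#}$.

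For the upper bound it suffices to exhibit one integer specialization at which $v_p(\Delta_n^{\#})=1$, since $\cont(\Delta_n^{\#})$ divides every specialized value. The case $p=3$, $k=1$ (so $n=1$) is the base case of Remark~\ref{rem:base-case-3}. There $S(1)=192$ and $v_3(S(1))=1$; since $e_3$ is then $\ge 0$ and $v_3(S(1))\ge 2e_3+1$, we get $e_3=0$ and $v_3(\cont\Delta_1^{\#})=1$.

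In all other cases we use the specialization \eqref{eq:spec} (or \eqref{eq:speck1} when $k=1$). By Lemma~\ref{lem:deform},
\[
A^{\#}\equiv u \pmod p,\qquad C^{\#}\equiv u\pmod p,\qquad B^{\#}\equiv 2u+p\mu s\pmod{p^2}.
\]
We need $A^{\#}C^{\#}$ to second order, so we write $A^{\#}=u+pa'$ and $C^{\#}=u+pc'$. Here $a',c'$ depend on $s$ only through terms of valuation $\ge e_p+2$ relative to $p^{e_p}$, or not at all. Indeed, by the proof of Lemma~\ref{lem:deform}, $s$ enters only the $xz$ coefficient at order $p^{e_p+1}$. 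Then
\[
\Delta^{\#}=(B^{\#})^2-4A^{\#}C^{\#}\equiv 4u^2+4pu\mu s-4u^2-4pu(a'+c')\equiv 4pu\bigl(\mu s-(a'+c')\bigr)\pmod{p^2}.
\]
Since $p\nmid 4u\mu$, the bracket is a nonconstant affine function of $s$ modulo~$p$. Choosing the integer $s$ with $\mu s-(a'+c')\not\equiv 0\pmod p$ (for instance one of $s=0,1$ works) gives $v_p(\Delta^{\#})=1$ at this specialization, whence $v_p(\cont\Delta_n^{\#})\le 1$.

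The main obstacle is justifying the structural claims packed into Lemma~\ref{lem:deform}. First, the $s$-dependence of $A^{\#}$ and $C^{\#}$ must be absent modulo $p$, and must not spoil the $p^1$ term. With $f_{n-t}=s$ and the other nonzero indices $n,n+1,n+t+1$, the $A_n$ monomials $f_if_{N-i}$ involving $s$ would need partner index $N-(n-t)=n+t$. That index is zero in the specialization, so $A$ is $s$-free; a similar check gives the same for $C$. Second, the various $s$-dependent contributions to $B$ must not cancel to higher order, i.e.\ $\mu$ must be a unit. These contributions are the pairings of $f_{n-t}$ with $f_{n+t+1}$, which form exactly the cancellation-free coefficient $b_{n-t}$, and with $f_n,f_{n+1}$, which require indices $n+t+1$ or $n+t$ and so are not both present unless $t=1$. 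For $k\ge2$ the $s$-linear term is therefore just $b_{n-t}s$, with valuation $e_p+1$ by Lemma~\ref{lem:offcentre}.

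For $k=1$ one must also account for the pairing of $f_{n-1}$ with $f_{n+2}$ and with neighbouring indices. We check that the only $s$-linear part of $B$ is $b_{n-1}s$, using $v_p(3)=0$ for $p\ge5$. The term $s\cdot f_{n+1}$ would come from $A$ or $C$ with $n-1+n+1=N$, so we must verify it enters $A^{\#}$ only at order $\ge p$. This is the case because its scalar is off-centre and has valuation $\ge e_p+1$ by Lemma~\ref{lem:central}. The affine-in-$s$ argument above absorbs such a term, since it is linear in $s$ and contributes to $a'$. It would only break the argument if it exactly cancelled $\mu$, which we must rule out by an explicit valuation comparison. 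I expect this $k=1$ bookkeeping to be the delicate step.
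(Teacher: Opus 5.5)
Your architecture is the paper's. The lower bound is Corollary~\ref{cor:rank1disc}, the upper bound comes from the deformation of Lemma~\ref{lem:deform}, and the base case $p=3$, $k=1$ comes from $S(1)=192$; your deduction $e_3=0$ there is correct. For $k\ge 2$ your argument is complete, and more careful than the paper's. The paper writes $(B^{\#})^2-4A^{\#}C^{\#}\equiv(2u+p\mu s)^2-4u^2\pmod{p^2}$, although Lemma~\ref{lem:deform} only controls $A^{\#},C^{\#}$ modulo~$p$. This is harmless for $k\ge2$, because $A_n(\phi)=C_n(\phi)$ is the central coefficient and is free of~$s$, which is exactly what your $a',c'$ bookkeeping uses.

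The genuine gap is $k=1$, $p\ge 5$, which you flag but leave open. There your earlier claim that $a',c'$ are $s$-free modulo~$p$ is false. Under \eqref{eq:speck1}, $A_n$ picks up $f_{n-1}f_{n+1}$ and $C_n$ picks up $f_nf_{n+2}$. Each has coefficient $-\tfrac{2(n+2)}{n+1}$ times the central coefficient, hence valuation exactly $e_p+1$. So the $s$-coefficient of your bracket is $\mu-a'_1$, with both terms units, and what is needed is a congruence between residues, not a valuation comparison. Moreover, the constant part $a'_0+c'$ vanishes modulo~$p$, so $s=0$ fails and everything rests on $\mu\not\equiv a'_1$.

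To close it, put $\epsilon=(-1)^nN!\,((n+1)!)^2$ and $u'=\epsilon/p^{e_p}$, a unit. Writing $\phi$ for the specialization,
\[
A_n(\phi)=\epsilon\Bigl(1+\tfrac{2(n+2)}{n+1}s\Bigr),\qquad C_n(\phi)=\epsilon\Bigl(1-\tfrac{2(n+2)}{n+1}\Bigr),\qquad B_n(\phi)=-\tfrac{2\epsilon}{n+1}\Bigl(1+\tfrac{3(n+2)}{n}s\Bigr).
\]
Using $n+2=p$, $\tfrac{1}{n+1}\equiv-(1+p)\pmod{p^2}$ and $\tfrac1n\equiv-\tfrac12\pmod p$, one gets $\Delta_n^{\#}(\phi)\equiv-4u'^2ps\pmod{p^2}$. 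In your notation $\mu\equiv-3u'$ and $a'_1\equiv-2u'$. So the $A$-term changes the naive coefficient $4u\mu\equiv-12u'^2$ to $-4u'^2$ without killing it, and $s=1$ works. The paper's displayed congruence misses this same term, so its intermediate formula is wrong for $k=1$, although its conclusion at $s=1$ is true.

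A cleaner fix skips the deformation when $k=1$. The coefficient of $f_n^2f_{n+1}^2$ in $\Delta_n$ is $b_n^2-4\epsilon^2=-4\epsilon^2\,n(n+2)/(n+1)^2$, of valuation exactly $2e_p+k$. For $k=1$ this is $2e_p+1$ for every odd $p$, including $p=3$, where the coefficient equals $-192$.
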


\begin{proof}
For $k \ge 2$, or $k = 1$ with $p \ge 5$:
from Lemma~\ref{lem:deform},
\begin{align*}
(B^{\#})^2 - 4A^{\#}C^{\#}
&\equiv (2u+p\mu s)^2 - 4u^2
= 4u^2 + 4up\mu s + p^2\mu^2 s^2 - 4u^2 \\
&\equiv 4up\mu s \pmod{p^2}.
\end{align*}
At $s=1$: $v_p = 1$ (since $p \nmid u\mu$).
Combined with Corollary~\ref{cor:rank1disc}:
$v_p(\cont(\Delta_n^{\#})) = 1$.

For $p = 3$, $k = 1$: by Remark~\ref{rem:base-case-3},
$v_3(S(1)) = 1$, so $v_3(\cont(\Delta_1^{\#})) = 1$ as well.
\end{proof}

\subsection{Synthesis: proof of Theorem~\ref{thm:main}}\label{subsec:synthesis}

We now combine the results of the preceding subsections to establish
the main theorem.

\begin{proof}[Proof of Theorem~\ref{thm:main}]
\textbf{Part~(i):} $v_2(S(n))$ is always even, by Proposition~\ref{prop:p2}.

\textbf{Part~(ii):}
Fix an odd prime~$p$.

\emph{If $m = p^k$:} By Proposition~\ref{prop:pp},
$v_p(S(n)) = 2e_p + 1 \equiv 1 \pmod{2}$, so $p \mid \sqf(S(n))$.
Since $m$ can be a power of at most one odd prime,
the odd part of $\sqf(S(n))$ is~$p$.

\emph{If $m$ is not a power of~$p$:} By Proposition~\ref{prop:nonpp}
(for $p \ge 5$, and by Proposition~\ref{prop:p3-nonpp} for $p = 3$),
$v_p(S(n)) \equiv 0 \pmod{2}$, so $p \nmid \sqf(S(n))$.

Combining parts~(i) and~(ii): $\sqf(S(n)) = a(n+2)$.
\end{proof}

\begin{remark}
The proof of part~(ii) for $p=3$ in the non-prime-power case uses
a different witness construction from the $p \ge 5$ case, based on
ternary digit recursions.  This is developed in
\S\ref{sec:p3-draft} below.
\end{remark}

\subsection{The prime $p=2$}\label{subsec:p2}

We show that $v_2(S(n))$ is always even, so the prime~$2$ never
contributes to the squarefree part of~$S(n)$.

\begin{proposition}\label{prop:p2}
For every $n \ge 1$, $v_2(S(n))$ is even.
\end{proposition}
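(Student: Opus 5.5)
We follow the outline in the introduction. Every coefficient of $B_n$ is even: by Corollary~\ref{cor:b-factored} the coefficient of $f_jf_{M-j}$ is $b_j=2(-1)^jN!(M-2j)(M-j)!\,j!$. So we write $B_n=2G$ with $G\in\BZ[f_0,\dots,f_{2n+1}]$, which gives
\[
\Delta_n=4\,(G^2-A_nC_n).
\]
Since $v_2(4)=2$, it suffices to show that $v_2(\cont(G^2-A_nC_n))$ is even. By Gauss's lemma, $\cont(G^2)=\cont(G)^2$. By the symmetry $f_j\leftrightarrow f_{2n+1-j}$, $i\leftrightarrow N-i$ visible in \eqref{eq:An} and \eqref{eq:Cn}, $C_n$ is obtained from $A_n$ by the index reversal, so $\cont(C_n)=\cont(A_n)$ and $\cont(A_nC_n)=\cont(A_n)^2$. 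Set $g=v_2(\cont G)$ and $a=v_2(\cont A_n)$.

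\textbf{Case A: $g\neq a$.} The part of smaller $2$-adic content cannot be cancelled modulo the next power of $2$. Concretely, if $g<a$, reduce $(G^2-A_nC_n)/2^{2g}$ modulo $2$. It becomes $(G/2^g)^2\bmod 2$, which is nonzero in $\BF_2[f]$ because $\BF_2[f]$ is a domain. Hence $v_2(\cont(G^2-A_nC_n))=2g$, which is even; the case $a<g$ is symmetric. We then show that Case A holds whenever $m=n+2$ is not a power of $2$. Compute the $2$-adic valuations of the scalars $\binom{N}{i}\alpha_i\beta_i=N!\,(N+1-i)!\,(i+1)!$ and $b_k/2$ via Lemma~\ref{lem:bk}. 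Both become $v_2(N!)+v_2((2\ell)!)-v_2\binom{2\ell}{\cdot}$-type expressions, so comparing $g$ with $a$ reduces to comparing maxima of $v_2$ of binomial coefficients over the relevant index ranges. The key input is that the central binomial index is never the unique maximizer of $v_2\binom{2\ell}{j}$ in the relevant range (Lemma~\ref{lem:centre-not-max}). This forces the strict inequality $\cont(A_n)>\cont(G)$ $2$-adically, i.e.\ $a>g$. Parity of the resulting valuation is automatic.

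\textbf{Case B: $m=2^k$.} Here $g=a$ can occur, with the minimum attained at the centre. Divide by $2^{d}$ with $d=g=a$, writing $G'=G/2^d$, $A'=A_n/2^d$, $C'=C_n/2^d$, and prove that $4\mid\cont(G'^2-A'C')$. Modulo $2$, the analogue of Lemma~\ref{lem:central} shows that only central summands survive. So $G'\equiv g f_nf_{n+1}+\cdots$ and $A'\equiv a f_n^2$, $C'\equiv a f_{n+1}^2$ with $g,a$ odd. Over $\BF_2$, squaring is Frobenius, so $G'^2\equiv \sum(\text{terms})^2$. To go modulo $4$ we need the mod-$2$ next-order terms and the identity $(x+2y)^2\equiv x^2 \pmod 4$. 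Everything is then governed by the centre--centre contribution $(g^2-a^2)f_n^2f_{n+1}^2$, and $g^2-a^2\equiv 0\pmod 8$ since $g$ and $a$ are odd. The cross terms between central and off-central pieces appear with factor $2$ in $G'^2$, and we must check that they are matched by the corresponding $A'C'$ cross terms modulo $4$. This bookkeeping is the main obstacle: we need the precise list of off-central summands with valuation exactly $d+1$ and must verify that the cross terms pair off.

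For the matching upper bound in Case B, take $k_0=n+2^{k-1}$ (off centre). By a complete-residue-system argument as in Proposition~\ref{prop:nonpp}, $v_2(b_{k_0}/2)=d+1$. The square monomial $(f_{k_0}f_{N+1-k_0})^2$ does not appear in $A_nC_n$, by the argument at the end of Proposition~\ref{prop:nonpp}. So it has coefficient $(b_{k_0}/2)^2$ in $G^2-A_nC_n$, of valuation $2d+2$. This gives $v_2(\cont)=2d+2$, which is even. Small cases, in particular $n=1$ where $S(1)=2^6\cdot3$, are checked directly.
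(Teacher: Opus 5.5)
Your outline is the paper's: $B_n=2G$, Gauss's lemma, the Case~A/Case~B split, Lemma~\ref{lem:centre-not-max}, and the witness $k_0=n+2^{k-1}$, whose upper bound you handle correctly. The gap is that the Case~B lower bound $4\mid\cont(G'^{\,2}-A'C')$ is left unproved, and the mechanism you propose for it cannot work.

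The off-centre coefficients of $G'$ are even, so your own identity $(x+2y)^2\equiv x^2\pmod 4$ gives $G'^{\,2}\equiv g^2f_n^2f_{n+1}^2\pmod 4$. Hence $G'^{\,2}$ has no surviving cross terms to match anything. The cross terms that matter are in $A'C'$. Write $A'=af_n^2+A'_{\mathrm{off}}$ and $C'=af_{n+1}^2+C'_{\mathrm{off}}$, with even off-centre parts. Then $G'^{\,2}-A'C'\equiv(g^2-a^2)f_n^2f_{n+1}^2-a\bigl(f_n^2C'_{\mathrm{off}}+f_{n+1}^2A'_{\mathrm{off}}\bigr)\pmod 4$. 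The monomials in the bracket are $f_n^2$ times an off-centre $C_n$-monomial or $f_{n+1}^2$ times an off-centre $A_n$-monomial. These are pairwise distinct and differ from $f_n^2f_{n+1}^2$, so nothing pairs off. Since $a$ is odd, $4\mid\cont(G'^{\,2}-A'C')$ is \emph{equivalent} to every off-centre coefficient of $A_n$ and $C_n$ having $v_2\ge d+2$. Central dominance alone gives only $d+1$.

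The missing idea is a doubling inside $A_n$ and $C_n$, not a matching between $G'^{\,2}$ and $A'C'$. For $i\ne n$, the monomial $f_if_{N-i}$ of $A_n$ receives two \emph{equal} contributions, from the summands $i$ and $N-i$ of \eqref{eq:An}. They have the same sign because $N$ is even, and the same scalar because $\binom{N}{i}\alpha_i\beta_i=\binom{N}{N-i}\alpha_{N-i}\beta_{N-i}$. So the coefficient is $2\binom{N}{i}\alpha_i\beta_i$, and likewise for $C_n$. Off-centre summands of valuation exactly $d+1$ do occur, e.g.\ at $i=n\pm 2^{k-1}$: the scalar $N!\,(N+1-i)!\,(i+1)!$ exceeds its central value $2$-adically by exactly $k-v_2(\lvert n-i\rvert)$. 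But they always come in such pairs, which gives $v_2\ge d+2$ and closes the mod-$4$ argument.

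The same doubling drives Case~A, and your sketch omits it there too. We have $v_2\bigl(N!\,(N+1-i)!\,(i+1)!\bigr)=v_2(b_i/2)+v_2(i+1)$, and $\max_j v_2\binom{M}{j}$ is attained at an even index. So the bare scalars have minimal valuation exactly $g$, and comparing binomial maxima alone yields no strict inequality. The bound $a>g$ comes from the factor $2$ on the paired off-centre coefficients. Lemma~\ref{lem:centre-not-max} (for even $n$), or $v_2(n+1)\ge 1$ (for odd $n$), is needed only for the unpaired central coefficient $\binom{N}{n}\alpha_n\beta_n$.
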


The proof uses Gauss's lemma for multivariate polynomials over~$\BZ$,
which asserts $\cont(fg) = \cont(f)\cont(g)$, together with a structural
decomposition of~$\Delta_n$.

\begin{proof}
\emph{Base case $n=1$.}
Direct computation gives $S(1)=192=2^6\cdot3$, so $v_2(S(1))=6$ is even.

\medskip\noindent\emph{Setup ($n\ge 2$).}
Since $\beta(n,N{-}j) = \alpha(n,j)$ and $\binom{N}{j}=\binom{N}{N{-}j}$,
the $\alpha^2$-term at index~$j$ and the $\beta^2$-term at index~$N{-}j$
contribute equally to each monomial of~$B_n$.  Hence every coefficient
of~$B_n$ is even: write $B_n = 2G$ for a polynomial~$G$ with integer
coefficients.  The coefficient of~$G$ at $f_jf_{N+1-j}$ is the sum of the
$\alpha^2$-contributions from all summand indices mapping to
$\{j,\,N{+}1{-}j\}$.
Then
\[
\Delta_n = (2G)^2 - 4A_nC_n = 4(G^2 - A_nC_n),
\]
so $S(n) = \cont(\Delta_n) = 4\cdot\cont(G^2 - A_nC_n)$ and
$v_2(S(n)) = 2 + v_2(\cont(G^2-A_nC_n))$.  It therefore suffices to
show that $v_2(\cont(G^2-A_nC_n))$ is even.

\medskip\noindent\emph{Gauss's lemma.}
By Gauss's lemma for multivariate polynomials over~$\BZ$:
\begin{align*}
\cont(G^2) &= \cont(G)^2, \\
\cont(A_nC_n) &= \cont(A_n)\cdot\cont(C_n).
\end{align*}
Since $A_n$ and~$C_n$ have the same scalar coefficients
$\binom{N}{i}\alpha_i\beta_i$ attached to structurally symmetric
monomials, $\cont(A_n)=\cont(C_n)$, so
$\cont(A_nC_n)=\cont(A_n)^2$.
Both $v_2(\cont(G)^2)$ and $v_2(\cont(A_n)^2)$ are even.

\medskip\noindent\emph{Case~A: $v_2(\cont(G)^2)\ne v_2(\cont(A_n)^2)$.}
When the two even numbers $v_2(\cont(G)^2)$ and $v_2(\cont(A_n)^2)$
are distinct, every coefficient of~$G^2$ has $v_2 \ge v_2(\cont(G)^2)$
and every coefficient of~$A_nC_n$ has $v_2 \ge v_2(\cont(A_n)^2)$.
By the ultrametric property of~$v_2$,
\[
v_2(\cont(G^2-A_nC_n))
= \min\bigl(v_2(\cont(G)^2),\; v_2(\cont(A_n)^2)\bigr),
\]
which is the smaller of two distinct even numbers, hence even.

This case occurs whenever $n+2$ is \emph{not} a power of~$2$.
We now prove that $v_2(\cont(A_n)) = v_2(\cont(G))+1$ under this
hypothesis.  First, every coefficient of~$A_n$ is even: for $i<n$
the monomial $f_if_{N-i}$ receives equal contributions from
indices~$i$ and~$N{-}i$.  Indeed, $N=2n$ is even, so
$(-1)^{N-i}=(-1)^i$, and the scalar symmetry
$\binom{N}{i}\alpha_i\beta_i = \binom{N}{N{-}i}\alpha_{N-i}\beta_{N-i}$
ensures both contributions have the same sign and magnitude,
giving a factor of~$2$.  At the centre $i=n$, the coefficient is
$(-1)^n\binom{N}{n}\gamma^2$.  This is even because
$\binom{2n}{n}=2\binom{2n{-}1}{n{-}1}$.
Moreover, each $A_n$-coefficient divided by~$2$ has
$v_2 \ge v_2(N!) + v_2(M!) - c_{\max} = v_2(\cont(G))$,
where $c_{\max}$ is defined below.
Using $\operatorname{scalar}_{AB}(n,i) = N!\,(M{-}i)!\,(i{+}1)!$
and the factorial identity
$v_2(a!) + v_2(b!) = v_2((a{+}b)!) - v_2\binom{a+b}{a}$
with $a=M{-}i$, $b=i$, one obtains
\begin{equation}\label{eq:v2-scalarAB}
v_2(\operatorname{scalar}_{AB}(n,i))
  = v_2(N!) + v_2(M!) - v_2\tbinom{M}{i} + v_2(i{+}1).
\end{equation}
For \emph{off-centre} $i \ne n$:
the $A_n$-coefficient divided by~$2$ equals
$\operatorname{scalar}_{AB}(n,i)$ (from the additive pairing).
Since $v_2\binom{M}{i} \le c_{\max}$ and $v_2(i{+}1) \ge 0$,
\eqref{eq:v2-scalarAB} gives
$v_2 \ge v_2(N!) + v_2(M!) - c_{\max}$.
For the \emph{centre} $i=n$:
the coefficient divided by~$2$ is
$\binom{2n{-}1}{n{-}1}\gamma^2 = \operatorname{scalar}_{AB}(n,n)/2$,
with
$v_2 = v_2(\operatorname{scalar}_{AB}(n,n)) - 1
     = v_2(N!) + v_2(M!) - v_2\tbinom{M}{n} + v_2(n{+}1) - 1$.
We need this $\ge v_2(N!) + v_2(M!) - c_{\max}$,
equivalently
\begin{equation}\label{eq:centre-ineq}
c_{\max} - v_2\tbinom{M}{n} + v_2(n{+}1) \ge 1.
\end{equation}
When $n$ is odd, $n{+}1$ is even so $v_2(n{+}1) \ge 1$,
and $c_{\max} \ge v_2\binom{M}{n}$ by definition,
giving~\eqref{eq:centre-ineq}.
When $n$ is even, $v_2(n{+}1) = 0$,
and~\eqref{eq:centre-ineq} requires the strict inequality
$c_{\max} > v_2\binom{M}{n}$,
which is the content of the following lemma.

\begin{lemma}\label{lem:centre-not-max}
For even $n \ge 2$ with $n+2$ not a power of~$2$,
\[
\max_{0 \le j \le 2n}\, v_2\tbinom{2n+1}{j}
\;>\; v_2\tbinom{2n+1}{n}.
\]
\end{lemma}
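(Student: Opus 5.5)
The plan is to reduce both sides of the inequality to binary digit counts via Kummer's theorem, already used in the proof of Lemma~\ref{lem:bingcd}. Write $M=2n+1$, let $\ell$ be the bit-length of $n$, and let $s(\cdot)$ denote the binary digit sum. Kummer's theorem says $v_2\binom{M}{j}$ equals the number of carries when $j$ and $M-j$ are added in base~$2$. Since $M=2n+1$, the binary expansion of $M$ is that of $n$ followed by a final digit~$1$. So $M$ has bit-length $\ell+1$, its top bit is at position $\ell$, and $s(M)=s(n)+1$.

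\emph{Step 1: the central value.} Take $j=n$, so $M-j=n+1$. Because $n$ is even, $n+1$ agrees with $n$ in every bit except bit~$0$, where $n$ has a $0$ and $n+1$ has a $1$. Add them column by column.
At position~$0$ the sum is $0+1$, so there is no carry.
At each higher position $i$ both summands have the same bit $b_i$ of $n$. By induction, the carry coming in is exactly the carry from position $i-1$, and the column produces a carry out precisely when $b_i=1$.
This gives
\[
v_2\tbinom{M}{n}=s(n).
\]

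\emph{Step 2: the maximum.} I will show that $\max_j v_2\binom{M}{j}$ is at least $\ell-1$ by exhibiting a $j$ that produces a carry out of every position $1,2,\dots,\ell-1$. (The maximum equals $\ell$ minus the number of trailing ones of $M$, which is $\ell-1$ here, but only the lower bound is needed.) Choose the bits of $j$ and $M-j$ as follows.
At position~$0$ put a $1$ in $j$ and a $0$ in $M-j$; this makes the column sum $1$ with no carry.
At position~$1$, where the bit of $M$ is bit~$0$ of $n$, namely $0$, put $1+1$; this creates a carry.
At each later position $i\le \ell-1$, with a carry coming in: if the bit of $M$ is $1$, put $1+1$; if it is $0$, put $1+0$. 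In both cases the column produces the required bit of $M$ and carries out.
The final carry into position $\ell$ produces the top bit of $M$. It remains to check that $0\le j\le 2n$: the summands have no bit at position $\ell$ or above, so both are less than $2^{\ell}\le M$, hence both are positive and at most $M-1=2n$.

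\emph{Step 3: comparison.} The lemma now reduces to the inequality $s(n)<\ell-1$. This says that among the $\ell$ bits of $n$ there are at least two zeros. Bit~$0$ is one zero, since $n$ is even. Suppose it were the only one. Then $n=2^{\ell}-2$, so $n+2=2^{\ell}$, which contradicts the hypothesis that $n+2$ is not a power of~$2$. Hence $s(n)\le \ell-2$, and therefore
\[
\max_j v_2\tbinom{M}{j}\ \ge\ \ell-1\ >\ s(n)=v_2\tbinom{M}{n}.
\]

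The main obstacle is Step~2, specifically getting the right maximal carry count. A careless bound of "number of zero bits of $M$" is false; for example $M=21$ has maximum $v_2\binom{21}{6}=3$. What makes the argument work is that a carry chain, once started at the first zero bit, can be propagated through every higher position, including positions where $M$ has a $1$. The explicit construction above, together with its range check, is the part to verify carefully, for instance against $n=10$.
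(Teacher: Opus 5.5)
Your proof is correct, and it reaches the inequality through a different decomposition than the paper's. The paper writes $n+2=2^{a+1}(2r+1)$ with $r\ge 1$ and evaluates the centre as $s_2(r)+a$ using the digit-sum form of Kummer's theorem. It then exhibits the closed-form witness $j_0=2(2^{a+1}r-1)$, for which $v_2\binom{M}{j_0}=s_2(r-1)+a+2$, and finishes with $s_2(r)\le s_2(r-1)+1$.

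You instead identify the centre value as $s_2(n)$ and show the maximum is at least $\ell-1$. It is in fact exactly $\ell-1$, since the trailing $1$ of $M$ forbids a carry out of position $0$. So the lemma reduces to the observation that an even $n$ with $n+2\ne 2^\ell$ must have a second zero bit. Your bit-by-bit construction is simply $j=2^\ell-1$, with complement $M-j=2n+2-2^\ell$.

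Your route buys more than the lemma asks for. The hypothesis enters transparently. The statement is visibly sharp: for $n=2^\ell-2$ the centre value $s_2(n)=\ell-1$ equals the maximum, which is exactly the paper's Case~B. The complement $2n+2-2^\ell$ is an even index that attains the true maximum, and it lies strictly below $n$ because the hypothesis forces $n+2<2^\ell$. That is precisely the even off-centre maximizer Proposition~\ref{prop:p2} needs later, which the paper obtains by a separate consecutive-pair and symmetry argument.

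The paper's witness only beats the centre and need not attain the maximum. For $n=20$ it gives $v_2\binom{41}{18}=3$, while $v_2\binom{41}{31}=4$. Its advantage is that it is a single explicit formula, checked by digit-sum identities without any carry-by-carry bookkeeping.
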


\begin{proof}
Write $M = 2n{+}1$ and let $s_2(m)$ denote the binary digit sum (popcount) of~$m$.
By Kummer's theorem in its digit-sum form,
\begin{equation}\label{eq:kummer-s2}
v_2\tbinom{M}{j} \;=\; s_2(j) + s_2(M - j) - s_2(M).
\end{equation}
Since $M = 2n{+}1$, we have $s_2(M) = s_2(n) + 1$.
For the central index $j = n$, the complementary index is $M - n = n{+}1$, giving
$v_2\binom{M}{n} = s_2(n) + s_2(n{+}1) - s_2(n) - 1 = s_2(n{+}1) - 1$.

Since $n$ is even and $n{+}2$ is not a power of~$2$, we may write
$n + 2 = 2^{a+1}(2r+1)$ with $a \ge 0$ and $r \ge 1$.
Set $u = 2^a(2r{+}1) - 1$, so that $n = 2u$.
Using the identity $s_2(2^a m - 1) = s_2(m{-}1) + a$ (valid for $m \ge 1$),
one computes
\[
s_2(n) = s_2(2u) = s_2(u) = s_2(r) + a, \qquad
s_2(n{+}1) = s_2(2u{+}1) = s_2(u) + 1 = s_2(r) + a + 1,
\]
so $v_2\binom{M}{n} = s_2(r) + a$.

We exhibit an explicit even witness $j_0 < n$ with a strictly larger valuation.
Set $j_0 = 2(2^{a+1}r - 1)$.
Since $2^{a+1}r < 2^a(2r{+}1)$, we have $j_0 < 2u = n$, and $j_0$ is
manifestly even.  The digit-sum identities give
\[
s_2(j_0) = s_2(2^{a+1}r - 1) = s_2(r{-}1) + a + 1,
\]
and a short calculation shows $M - j_0 = 2(2^{a+1}(r{+}1) - 1) + 1$, whence
\[
s_2(M - j_0) = s_2(2^{a+1}(r{+}1) - 1) + 1 = s_2(r) + a + 2.
\]
Substituting into~\eqref{eq:kummer-s2}:
$v_2\binom{M}{j_0} = s_2(r{-}1) + a + 2$.
Since $r = (r{-}1) + 1$ and adding~$1$ can increase the digit sum by at most~$1$,
we have $s_2(r) \le s_2(r{-}1) + 1$, and therefore
\[
v_2\tbinom{M}{j_0} = s_2(r{-}1) + a + 2 > s_2(r) + a = v_2\tbinom{M}{n}. \qedhere
\]
\end{proof}

\noindent
Assuming Lemma~\ref{lem:centre-not-max},
inequality~\eqref{eq:centre-ineq} holds for all
$n \ge 2$ with $n+2$ not a power of~$2$.
Hence $v_2(\cont(A_n)) \ge v_2(\cont(G))+1$.

For the reverse inequality, we show that $\cont(G)$ achieves
$v_2(\cont(A_n))-1$.
Set $M = 2n{+}1$.  By the factored formula~\eqref{eq:b-factored},
the $G$-coefficient at~$f_jf_{M-j}$ has absolute value
$N!\cdot|M{-}2j|\cdot(M{-}j)!\cdot j!$.
Since $M{-}2j$ is always odd, its $2$-adic valuation is
$v_2(N!) + v_2((M{-}j)!\cdot j!)
= v_2(N!) + v_2(M!) - v_2\binom{M}{j}$,
where the last step uses $v_2(a!\cdot b!) = v_2((a{+}b)!) - v_2\binom{a+b}{a}$.
Thus $v_2(\cont(G)) = v_2(N!) + v_2(M!) - c_{\max}$,
where $c_{\max} = \max_{0\le j\le 2n} v_2\binom{M}{j}$.
Similarly, the off-centre $A_n$-coefficient at~$f_if_{N-i}$ (for $i<n$)
is $2(-1)^i\binom{N}{i}\alpha_i\beta_i$, with
$v_2 = 1 + v_2(N!) + v_2((M{-}i)!) + v_2((i{+}1)!)$.
For even~$i$, $M{-}i$ is odd, so $v_2((M{-}i)!) = v_2((M{-}i{-}1)!)$ and
$v_2 = 1 + v_2(N!) + v_2(M!) - v_2\binom{M}{i+1}$.
For $M$ odd, the consecutive-pair identity
$\binom{M}{2r}/\binom{M}{2r{+}1} = (2r{+}1)/(M{-}2r)$.
Since $M$ is odd and $2r$ is even, $M{-}2r$ is odd; and $2r{+}1$ is odd.
The ratio is therefore a quotient of two odd numbers, so
$v_2\binom{M}{2r} = v_2\binom{M}{2r{+}1}$.
Therefore $c_{\max}$ is achieved at some even index $j_0$.
We claim we can take $j_0 < n$ (so that $j_0$ is off-centre).
Indeed, $j_0 \ne n$ by Lemma~\ref{lem:centre-not-max}
(which gives $v_2\binom{M}{n} < c_{\max}$).
If $j_0 > n$, set $j_1 = 2n - j_0$; then $j_1$ is even,
$j_1 < n$, and
$v_2\binom{M}{j_1} = v_2\binom{M}{2n+1-j_0} = v_2\binom{M}{j_0} = c_{\max}$,
where the first equality uses the consecutive-pair identity
(since $2n{+}1{-}j_0$ is odd) and the binomial symmetry
$\binom{M}{j_0} = \binom{M}{M-j_0} = \binom{M}{2n+1-j_0}$.
So we may assume $j_0 < n$.
The corresponding off-centre $A_n$-coefficient at $i = j_0$ has
$v_2 = 1 + v_2(N!) + v_2(M!) - v_2\binom{M}{j_0+1}
= 1 + v_2(N!) + v_2(M!) - v_2\binom{M}{j_0}
= 1 + v_2(\cont(G))$, showing
$v_2(\cont(A_n)) = v_2(\cont(G))+1$.

\medskip\noindent\emph{Case~B: $v_2(\cont(G)^2)= v_2(\cont(A_n)^2)$.}
This occurs precisely when $n+2=2^k$ ($k \ge 2$), in which case
$v_2(\cont(G))=v_2(\cont(A_n))$.

Set $d = v_2(\cont(G))$.  Since $2^d$ divides every coefficient
of~$G$, $A_n$, and~$C_n$, define the quotient polynomials
$G' = G/2^d$, $A' = A_n/2^d$, $C' = C_n/2^d$.
These have integer coefficients because $2^d$ divides every coefficient
of~$G$, $A_n$, and~$C_n$.  Then
$G^2-A_nC_n = 2^{2d}(G'^{\,2}-A'C')$, so
$v_2(\cont(G^2-A_nC_n))=2d+v_2(\cont(G'^{\,2}-A'C'))$.
It suffices to show $v_2(\cont(G'^{\,2}-A'C'))$ is even.  We prove
$4 \mid \cont(G'^{\,2}-A'C')$, giving $v_2\ge 2$, and the pair
specialization provides the matching upper bound $v_2 \le 2$.

\smallskip\noindent\emph{Off-centre divisibility of $G'$, $A'$, $C'$.}
Lemma~\ref{lem:central} applies to \emph{all} primes,
including~$2$, because its proof uses only $n+2 = p^k$ and
the factorial structure of~$\operatorname{scalar}_{AB}$, with no
assumption that $p$ is odd.  It gives the strict central dominance:
$v_2(\operatorname{scalar}_{AB}(n,i)) > d$ for every off-centre
index $i \ne n$.  Each $G$-coefficient is a sum of terms, each
involving some $\operatorname{scalar}_{A2}(n,j)$ with $j$ off-centre;
since $\operatorname{scalar}_{A2}(n,j)$ satisfies the same central minimum
property---namely $v_2(\operatorname{scalar}_{A2}(n,j)) > d$ for $j\ne n$,
as $\operatorname{scalar}_{A2}(n,n) = \operatorname{scalar}_{AB}(n,n)$
and the central-minimum proof applies identically to
$\operatorname{scalar}_{A2}$---every off-centre $G$-coefficient
has $v_2 > d$, giving $v_2(G'_j) \ge 1$ after dividing by~$2^d$.
The centre coefficient of~$G'$ has $v_2 = 0$, since the centre
$G$-coefficient achieves the minimum $v_2 = d = v_2(\cont(G))$.
For~$A_n$: each off-centre coefficient is
$2\cdot(-1)^i\cdot\operatorname{scalar}_{AB}(n,i)$.  The factor of~$2$
arises from the additive pairing proved in the Setup: each off-centre
$A_n$-monomial $f_if_{N-i}$ receives two equal contributions from
indices~$i$ and $N{-}i$ with the same sign.  By central dominance,
$v_2(\operatorname{scalar}_{AB}(n,i)) \ge d+1$, so the total
$v_2 \ge 1 + (d+1) = d+2$.  After dividing by~$2^d$:
$v_2(A'_i) \ge 2$ for off-centre~$i$, while $v_2(A'_n) = 0$.
The same holds for~$C'$.

\smallskip\noindent\emph{First factor of~$2$ (Frobenius over~$\BF_2$).}
Over~$\BF_2$, only the centre monomials of $G'$, $A'$, $C'$ survive:
$\overline{G'} = \bar g\, f_nf_{n+1}$,\;
$\overline{A'} = \bar a\, f_n^2$,\;
$\overline{C'} = \bar a\, f_{n+1}^2$,
where $\bar g = \bar a = 1 \in \BF_2$.  Then
$\overline{G'}^{\,2} = f_n^2f_{n+1}^2 = \overline{A'}\,\overline{C'}$,
so $G'^{\,2}-A'C' \equiv 0\pmod{2}$.

\smallskip\noindent\emph{Second factor of~$2$ (mod-$4$ analysis).}
Write $G' = g\cdot f_nf_{n+1} + 2R_G$, $A' = a\cdot f_n^2 + 2R_A$,
$C' = a\cdot f_{n+1}^2 + 2R_C$, where $g,a$ are odd integers and
$R_G,R_A,R_C$ have integer coefficients.  Since $v_2(A'_i)\ge 2$ for
off-centre~$i$, as proved in the preceding paragraph,
$R_A$ and~$R_C$ have all \emph{even}
coefficients.  Expanding $G'^{\,2}-A'C'$:
\begin{enumerate}[label=(\roman*)]
\item \emph{Centre--centre:}
  $(g^2-a^2)f_n^2f_{n+1}^2$.
  Both $g$ and~$a$ are odd, so $4\mid(g{-}a)(g{+}a) = g^2-a^2$.

\item \emph{Centre--off-centre from~$A'C'$:}
  $-2a(f_n^2 R_C + R_A f_{n+1}^2)$.
  Each coefficient of $R_A$ and~$R_C$ is even, so
  $2a\cdot(\text{even}) \equiv 0\pmod{4}$.

\item \emph{Remaining terms}
  ($4gf_nf_{n+1}R_G$, $4R_G^2$, $-4R_AR_C$)
  carry an explicit factor of~$4$.
\end{enumerate}
Hence $4$ divides every coefficient of $G'^{\,2}-A'C'$, giving
$v_2(\cont(G'^{\,2}-A'C'))\ge 2$.

\smallskip\noindent\emph{Upper bound.}
For any off-centre $k_0\notin\{n,n+1\}$, the pair specialization
$\phi_{k_0}$ satisfies $A_n(\phi_{k_0})=C_n(\phi_{k_0})=0$, so
$(G^2-A_nC_n)(\phi_{k_0}) = G(\phi_{k_0})^2 = (b_{k_0}/2)^2$.
Hence $\cont(G'^{\,2}-A'C')$ divides $(b_{k_0}/(2^{d+1}))^2$.

We exhibit $k_0 = n + 2^{k-1}$.  Since $k \ge 2$, we have
$k_0 = 3\cdot 2^{k-1} - 2$.  This satisfies $1 \le k_0 \le 2n$
and $k_0 \ne n$, $k_0 \ne n{+}1$, so $k_0$ is off-centre.
Moreover, $k_0 = 3\cdot 2^{k-1} - 2$ is even, so $k_0 + 1$ is odd
and $v_2(k_0{+}1) = 0$.  Also, $|2n{+}1{-}2k_0| = |2n+1-2n-2^k|
= 2^k - 1$, which is odd.  From the factored form~\eqref{eq:b-factored}:
\[
v_2(b_{k_0}) = 1 + v_2(\operatorname{scalar}_{AB}(n,k_0)) - v_2(k_0{+}1)
             = 1 + v_2(\operatorname{scalar}_{AB}(n,k_0)).
\]
We claim $v_2(\operatorname{scalar}_{AB}(n,k_0)) = d + 1$.
Lemma~\ref{lem:central} gives $\ge d+1$; it remains to show
$v_2(\operatorname{scalar}_{AB}(n,k_0)) \le d + 1$.
Since $\operatorname{scalar}_{AB}(n,i) = (2n)!\,(2n{+}1{-}i)!\,(i{+}1)!$,
Legendre's formula gives
$v_2(\operatorname{scalar}_{AB}(n,i))
 = v_2((2n)!) + v_2((2n{+}1{-}i)!) + v_2((i{+}1)!)$.
At $i = n$: this equals $d$.  At $i = k_0 = n + 2^{k-1}$:
\begin{align*}
v_2((k_0{+}1)!) - v_2((n{+}1)!)
  &= v_2\!\prod_{j=n+2}^{k_0+1} j
  = \sum_{j=n+2}^{k_0+1} v_2(j), \\
v_2((2n{+}1{-}k_0)!) - v_2((n{+}1)!)
  &= -\,v_2\!\prod_{j=2n+2-k_0}^{n+1} j
  = -\sum_{j=n+2-2^{k-1}}^{n+1} v_2(j).
\end{align*}
The numerator set $\{n{+}2,\ldots,n{+}2^{k-1}{+}1\}$ and the
denominator set $\{n{+}2{-}2^{k-1},\ldots,n{+}1\}$ each have
$2^{k-1}$ elements and form complete residue systems modulo~$2^{k-1}$.
They therefore have equal sums of $v_2$-values at every
$2$-adic level below~$k{-}1$.  The unique multiple of~$2^{k-1}$
in the numerator is $n{+}2 = 2^k$, with $v_2 = k$;
the unique multiple of~$2^{k-1}$ in the denominator is
$n{+}2{-}2^{k-1} = 2^{k-1}$, with $v_2 = k{-}1$.
Hence the total difference is $k - (k{-}1) = 1$, giving
$v_2(\operatorname{scalar}_{AB}(n,k_0)) = d + 1$.

Since $n{+}1 = 2^k{-}1$ is odd:
$v_2(b_n) = 1 + d$ and $v_2(b_{k_0}) = 1 + (d+1) = d + 2$.
Therefore $v_2(b_{k_0}/2) = d+1$,
$v_2(b_{k_0}/2^{d+1}) = 1$, and $(b_{k_0}/2^{d+1})^2$ has $v_2 = 2$.
Hence $v_2(\cont(G'^{\,2}-A'C'))\le 2$.

\smallskip
Combining: $v_2(\cont(G'^{\,2}-A'C'))=2$, which is even.

\medskip\noindent\emph{Conclusion.}
In both cases, $v_2(\cont(G^2-A_nC_n))$ is even, so
$v_2(S(n))=2+v_2(\cont(G^2-A_nC_n))$ is even.
\end{proof}

%======================================================================
% SECTION 6: DRAFT EXTENSION FOR p = 3
%======================================================================
\section{The remaining $p=3$ non-prime-power case}
\label{sec:p3-draft}

This section handles the $p=3$ non-prime-power case, namely
\[
3 \mid m=n+2,\qquad m \text{ not a power of } 3.
\]
Combining this section with the arguments of \S\ref{sec:padic} recovers
the full odd-prime squarefree-kernel formula.

\subsection{The witness family}

For $1 \le k \le n-1$, let $\phi_k$ denote the symmetric two-point
specialization
\begin{equation}\label{eq:phi-k}
f_k = 1,\qquad f_{N+1-k}=1,\qquad f_j = 0 \text{ for } j \notin \{k,N+1-k\}.
\end{equation}

The natural coefficient attached to this specialization is the diagonal
$B_n$-coefficient
\[
b_k = [f_k f_{N+1-k}]\,B_n,
\]
whose closed formula was proved in Lemma~\ref{lem:bk}.

\begin{proposition}\label{prop:p3-pair-reduction}
Let $1 \le k \le n-1$.  Under the specialization~\eqref{eq:phi-k},
one has
\[
A_n(\phi_k)=0,\qquad C_n(\phi_k)=0,\qquad B_n(\phi_k)=b_k,
\]
and hence
\[
\Delta_n(\phi_k)=b_k^2.
\]
Consequently, if for some such $k$ one has $v_3(b_k)=e_3$, where
\[
e_3 \defeq \min_{1 \le j \le N} v_3(b_j),
\]
then
\[
v_3(\Delta_n(\phi_k)) = 2e_3
\qquad\text{and hence}\qquad
v_3(S(n)) \text{ is even.}
\]
\end{proposition}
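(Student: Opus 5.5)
The plan is to evaluate $A_n$, $B_n$, $C_n$ directly at $\phi_k$ using the explicit sums of Lemma~\ref{lem:Qn}. Then we combine with a lower bound on the $3$-adic content of $\Delta_n$.

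\emph{Step 1: $A_n(\phi_k)=C_n(\phi_k)=0$.} Every monomial of $A_n$ has the form $f_if_{N-i}$, whose indices sum to $N$. Every monomial of $C_n$ has the form $f_{i+1}f_{N+1-i}$, whose indices sum to $N+2$. Under $\phi_k$ the only surviving quadratic monomials are $f_k^2$, $f_{N+1-k}^2$ and $f_kf_{N+1-k}$, with index sums $2k$, $2N+2-2k$ and $N+1$. The last is odd, so it never equals $N$ or $N+2$. The monomial $f_k^2$ would require $2k\in\{N,N+2\}$, i.e.\ $k\in\{n,n+1\}$, which is excluded by $1\le k\le n-1$. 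Similarly $f_{N+1-k}^2$ would require $N+1-k\in\{n,n+1\}$, i.e.\ $k\in\{n,n+1\}$ again. Hence both specializations vanish.

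\emph{Step 2: $B_n(\phi_k)=b_k$.} Monomials of $B_n$ are $f_if_{N+1-i}$ and $f_{i+1}f_{N-i}$, all with index sum $N+1$. Among the surviving monomials only $f_kf_{N+1-k}$ has index sum $N+1$, since the squares would need $2k=N+1$, which is impossible because $N+1$ is odd. So $B_n(\phi_k)$ is exactly the polynomial coefficient $b_k$ of Lemma~\ref{lem:bk}. Therefore $\Delta_n(\phi_k)=b_k^2$.

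\emph{Step 3: the valuation conclusion.} Since $S(n)$ divides $\Delta_n(\phi_k)$, we get $v_3(S(n))\le 2v_3(b_k)=2e_3$. For the reverse inequality I would reuse the argument of Proposition~\ref{prop:nonpp}: every coefficient of $A_n$, $B_n$, $C_n$ has $3$-adic valuation at least $e_3$. For $B_n$ this is immediate from Lemma~\ref{lem:bk} and the definition of $e_3$. For $A_n$ and $C_n$ it is the absorption-identity inequality~\eqref{eq:binom-ub}. That proof used only that $p$ is odd (so $v_p(2\ell)=v_p(\ell)$), and that two prime powers whose exponents exceed $v_p(2\ell-j)$ and $v_p(j)$ cannot both divide numbers differing by $2$. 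Both facts hold for $p=3$. Hence $\Delta_n=B_n^2-4A_nC_n$ has all coefficients divisible by $3^{2e_3}$, so $v_3(S(n))\ge 2e_3$. Combining the two bounds gives $v_3(S(n))=2e_3$, which is even.

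The main obstacle is Step 3's lower bound. One must check that nothing in the proof of~\eqref{eq:binom-ub} silently used $p\ge5$; the $v_p(3)$ issue arose only in the separate existence-of-$k_0$ argument, not in the content bound. Alternatively, one can argue directly: the degree-$4$ coefficient extraction shows $\Delta_n$'s coefficients are integer combinations of products of two coefficients of $A_n$, $B_n$, $C_n$, each of valuation $\ge e_3$. Note that the statement itself only promises evenness conditional on finding such a $k$ with $v_3(b_k)=e_3$. Locating that $k$ is deferred to the digit-sum recursion that follows.
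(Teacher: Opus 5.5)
Your proposal is correct and follows essentially the same route as the paper. Both show that $A_n(\phi_k)=C_n(\phi_k)=0$ and $B_n(\phi_k)=b_k$ by inspecting which monomials survive on the support $\{k,N+1-k\}$. Both then obtain $v_3(S(n))=2e_3$ by combining $S(n)\mid\Delta_n(\phi_k)=b_k^2$ with the content bound from Proposition~\ref{prop:nonpp}, and you are right that this bound uses only that $p$ is odd. Your index-sum parity argument is slightly more complete than the paper's remark that ``neither pattern can hit the same support index twice,'' because it also explicitly excludes the mixed monomial $f_kf_{N+1-k}$ from $A_n$ and $C_n$.
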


\begin{proof}
Under~\eqref{eq:phi-k}, every nonzero monomial in~$A_n$ has the form
$f_i f_{N-i}$, while every nonzero monomial in~$C_n$ has the form
$f_{i+1}f_{N+1-i}$.  Since the support of~$\phi_k$ is
$\{k,N+1-k\}$ and $k \ne n,n+1$, neither pattern can hit the same support
index twice, so both $A_n$ and $C_n$ vanish.

For $B_n$, the only surviving monomial is $f_k f_{N+1-k}$, whose
coefficient is by definition~$b_k$.  Thus
\[
\Delta_n(\phi_k)=B_n(\phi_k)^2-4A_n(\phi_k)C_n(\phi_k)=b_k^2.
\]
If $v_3(b_k)=e_3$, then this specialization exhibits a value of
$\Delta_n$ with $3$-adic valuation exactly~$2e_3$.  Since every coefficient
of $\Delta_n$ has valuation at least~$2e_3$ (because, exactly as in the
proof of Proposition~\ref{prop:nonpp}, every scalar coefficient of
$A_n$, $B_n$, and $C_n$ has valuation at least~$e_3$), it follows that
$v_3(S(n))=2e_3$ is even.
\end{proof}

\subsection{Reduction to a binomial maximization problem}

By Lemma~\ref{lem:bk},
\[
b_k = 2(-1)^k (N!)^2\,
\frac{(N+1-k)(N+1-2k)}{\binom{N}{k}}.
\]
Now assume $3 \mid m = n+2$, so $N = 2n \equiv 2 \pmod{3}$ and
$N+1 \equiv 0 \pmod{3}$.  If $3 \nmid k$, then
$N+1-k \not\equiv 0 \pmod{3}$ and $N+1-2k \not\equiv 0 \pmod{3}$, so
both linear factors are $3$-adic units.  Hence for such~$k$,
\begin{equation}\label{eq:v3bk-binomial}
v_3(b_k)= 2v_3(N!) - v_3\binom{N}{k}.
\end{equation}
Thus minimizing $v_3(b_k)$ among indices $k$ with $3 \nmid k$ is equivalent
to maximizing $v_3\binom{N}{k}$ among those same indices.

Thus the remaining $p=3$ problem is fundamentally combinatorial: find an
off-centre lower-half index $k$ with $3 \nmid k$ for which
$v_3\binom{N}{k}$ is maximal.

\subsection{Ternary recursions}

Write
\[
F_N(k) \defeq v_3\binom{N}{k}.
\]
\begin{lemma}\label{lem:p3-digit-sum}
For integers $N \ge 0$ and $0 \le k \le N$,
\[
F_N(k)=\frac{s_3(k)+s_3(N-k)-s_3(N)}{2},
\]
where $s_3(t)$ denotes the sum of the base-$3$ digits of~$t$.
\end{lemma}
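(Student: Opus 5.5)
This is Kummer's theorem in digit-sum form for $p=3$, the analogue of \eqref{eq:kummer-s2}. I will prove it from Legendre's formula.

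\textbf{Step 1: Legendre's formula.} First establish, or quote,
\[
v_3(t!) = \sum_{i\ge 1}\lfloor t/3^i\rfloor = \frac{t - s_3(t)}{2}.
\]
The second equality comes from writing $t=\sum_j d_j 3^j$. Then $\lfloor t/3^i\rfloor=\sum_{j\ge i} d_j 3^{j-i}$, and summing over $i\ge1$ gives
\[
\sum_j d_j\,\frac{3^j-1}{2}=\frac{t-s_3(t)}{2},
\]
using the geometric sum $1+3+\cdots+3^{j-1}=(3^j-1)/2$. This is the only computation needed, and it is routine.

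\textbf{Step 2: Combine.} Since $\binom{N}{k}=N!/(k!\,(N-k)!)$, we get
\[
F_N(k)=\frac{N-s_3(N)}{2}-\frac{k-s_3(k)}{2}-\frac{(N-k)-s_3(N-k)}{2}.
\]
The linear terms $N-k-(N-k)$ cancel, leaving $\tfrac12\bigl(s_3(k)+s_3(N-k)-s_3(N)\bigr)$, which is the claim.

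No real obstacle is expected. The identity is exact over $\BZ$, so integrality of the right-hand side is automatic; equivalently, each carry in the base-$3$ addition $k+(N-k)$ lowers the digit sum by exactly $2$. The edge cases $k=0$ and $k=N$ are consistent, since both sides are $0$ there, using $s_3(0)=0$.
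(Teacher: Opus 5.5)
Your proof is correct and takes the same approach as the paper. The paper simply cites this as the $p=3$ case of the classical Legendre--Kummer digit-sum formula, and your argument supplies the standard derivation of that formula from Legendre's formula $v_3(t!)=(t-s_3(t))/2$.
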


\begin{proof}
This is the $p=3$ specialization of the classical Legendre--Kummer
digit-sum formula for binomial coefficients.
\end{proof}

Define also
\[
G_A(a) \defeq F_A(a)+v_3(A-a).
\]

\begin{lemma}\label{lem:p3-F-recursions}
For integers $A \ge 0$ and valid values of~$a$, the following hold.
\begin{enumerate}[label=\textup{(\roman*)}]
\item If $0 \le a \le A$ and $r \in \{0,1,2\}$, then
\begin{equation}\label{eq:F-3Aplus2}
F_{3A+2}(3a+r)=F_A(a).
\end{equation}
\item If $0 \le a \le A-1$, then
\begin{align}
F_{3A}(3a) &= F_A(a),\label{eq:F-3A-zero}\\
F_{3A}(3a+1) &= G_A(a)+1,\label{eq:F-3A-one}\\
F_{3A}(3a+2) &= G_A(a)+1.\label{eq:F-3A-two}
\end{align}
\item If $0 \le a \le A-1$, then
\begin{align}
F_{3A+1}(3a) &= F_A(a),\label{eq:F-3Aplus1-zero}\\
F_{3A+1}(3a+1) &= F_A(a),\label{eq:F-3Aplus1-one}\\
F_{3A+1}(3a+2) &= G_A(a)+1.\label{eq:F-3Aplus1-two}
\end{align}
\end{enumerate}
\end{lemma}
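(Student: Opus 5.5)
We prove all seven identities directly from the digit-sum formula of Lemma~\ref{lem:p3-digit-sum}, combined with the elementary facts $s_3(3t+r)=s_3(t)+r$ for $r\in\{0,1,2\}$ and $s_3(t-1)=s_3(t)-1+2v_3(t)$ for $t\ge 1$. The second fact holds because subtracting $1$ turns the $v_3(t)$ trailing zeros into $2$'s and lowers the next digit by one. The plan is to write both $3A+\rho$ and $3a+r$ in base~$3$ by their last digit and compute the last digit of the difference $(3A+\rho)-(3a+r)$. If $r\le\rho$, no borrow occurs and the difference is $3(A-a)+(\rho-r)$. If $r>\rho$, a borrow occurs and the difference is $3(A-a-1)+(3+\rho-r)$.

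\emph{No-borrow cases.} These are (i), \eqref{eq:F-3A-zero}, \eqref{eq:F-3Aplus1-zero} and \eqref{eq:F-3Aplus1-one}. Here
\[
s_3(3a+r)+s_3(3(A-a)+\rho-r)-s_3(3A+\rho)=s_3(a)+s_3(A-a)-s_3(A),
\]
because the last digits contribute $r+(\rho-r)-\rho=0$. Dividing by $2$ gives $F_A(a)$. In (i) the hypothesis $a\le A$ guarantees $A-a\ge 0$. In (ii) and (iii) we in fact need only $a\le A$, although the statement assumes $a\le A-1$.

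\emph{Borrow cases.} These are \eqref{eq:F-3A-one}, \eqref{eq:F-3A-two} and \eqref{eq:F-3Aplus1-two}. Now $a\le A-1$ is needed, so that $t\defeq A-a\ge1$. The digit-sum numerator becomes
\[
s_3(a)+r+s_3(t-1)+(3+\rho-r)-s_3(A)-\rho = s_3(a)+s_3(t)-s_3(A)+2+2v_3(t),
\]
using the second fact for $s_3(t-1)$. Dividing by $2$ gives $F_A(a)+v_3(A-a)+1=G_A(a)+1$, as claimed.

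The only point needing care is the borrow identity for $s_3(t-1)$ and checking, case by case, that each claimed value of $r$ relative to $\rho$ really falls in the borrow or no-borrow regime. Once that is tabulated, every case is a one-line check. As a sanity check we would verify a small instance, e.g.\ $F_3(1)=v_3\binom31=1=G_1(0)+1=0+0+1$.
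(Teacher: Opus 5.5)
Your proof is correct and follows essentially the same route as the paper: the digit-sum formula of Lemma~\ref{lem:p3-digit-sum}, the last-digit rule $s_3(3t+r)=s_3(t)+r$, and the borrow identity $s_3(B-1)=s_3(B)-1+2v_3(B)$ applied with $B=A-a\ge 1$ when $r>\rho$. Your uniform split into no-borrow and borrow cases is simply a tidier packaging of the paper's case-by-case computation, and your remark that the no-borrow identities only need $a\le A$ is accurate.
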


\begin{proof}
Part~(i) follows immediately from Lemma~\ref{lem:p3-digit-sum}: if
$N=3A+2$ and $k=3a+r$, then
\[
s_3(3a+r)=s_3(a)+r,\qquad
s_3(3A+2-(3a+r))=s_3(A-a)+(2-r),
\]
and $s_3(3A+2)=s_3(A)+2$.

For part~(ii), let $B=A-a>0$.  The identity
\[
s_3(B-1)=s_3(B)-1+2v_3(B)
\]
gives
\[
s_3(3A-(3a+1))=s_3(3(B-1)+2)=s_3(B)+1+2v_3(B),
\]
and similarly
\[
s_3(3A-(3a+2))=s_3(3(B-1)+1)=s_3(B)+2v_3(B).
\]
Substituting into Lemma~\ref{lem:p3-digit-sum} yields
\eqref{eq:F-3A-zero}--\eqref{eq:F-3A-two}.

Part~(iii) is similar.  Again setting $B=A-a>0$, one has
\[
3A+1-(3a+2)=3(B-1)+2,
\]
so
\[
s_3(3A+1-(3a+2))=s_3(B)+1+2v_3(B).
\]
Together with $s_3(3a+2)=s_3(a)+2$ and $s_3(3A+1)=s_3(A)+1$, this gives
\eqref{eq:F-3Aplus1-two}; the identities
\eqref{eq:F-3Aplus1-zero} and \eqref{eq:F-3Aplus1-one} are immediate.
\end{proof}

\begin{corollary}\label{cor:p3-G-recursions}
For integers $A \ge 1$ and valid values of~$a$, one has:
\begin{align}
G_{3A}(3a) = G_{3A}(3a+1) = G_{3A}(3a+2) &= G_A(a)+1,\label{eq:G-3A}\\
G_{3A+1}(3a+1) = G_{3A+1}(3a+2) &= G_A(a)+1,\label{eq:G-3Aplus1}\\
G_{3A+2}(3a+2) &= G_A(a)+1.\label{eq:G-3Aplus2}
\end{align}
Moreover,
\[
G_{3A+1}(3a)=F_A(a),\qquad
G_{3A+2}(3a)=G_{3A+2}(3a+1)=F_A(a).
\]
\end{corollary}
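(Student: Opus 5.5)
The proof is a direct substitution. By definition $G_X(x)=F_X(x)+v_3(X-x)$, so each identity splits into two pieces. The $F$-part I would read off from Lemma~\ref{lem:p3-F-recursions}. The $v_3$-part I would compute from the shape of the difference $X-x$ modulo~$3$. Throughout I take $0\le a\le A-1$ for the identities involving $G_A(a)$, so that $B\defeq A-a\ge 1$ and all valuations are finite; this is the same range used in parts~(ii) and~(iii) of Lemma~\ref{lem:p3-F-recursions}. For the final two identities, whose right-hand side is $F_A(a)$, any $0\le a\le A$ is allowed.

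\textbf{Step 1: the difference valuations.} For residues $r\in\{0,1,2\}$ the differences are
\[
3A-(3a+r)=3B-r,\qquad 3A+1-(3a+r)=3B+1-r,\qquad 3A+2-(3a+r)=3B+2-r .
\]
A difference of the form $3B$ has $v_3=1+v_3(B)$. Any difference congruent to $\pm1\pmod 3$ has $v_3=0$.

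\textbf{Step 2: the identities \eqref{eq:G-3A}--\eqref{eq:G-3Aplus2}.} I combine Step~1 with the lemma case by case.
\begin{itemize}
\item For $G_{3A}(3a)$: by \eqref{eq:F-3A-zero} it equals $F_A(a)+1+v_3(B)=G_A(a)+1$.
\item For $G_{3A}(3a+1)$ and $G_{3A}(3a+2)$: by \eqref{eq:F-3A-one} and \eqref{eq:F-3A-two} they equal $G_A(a)+1+0$, since the differences $3B-1$ and $3B-2$ are $3$-adic units.
\item For $G_{3A+1}(3a+1)$: by \eqref{eq:F-3Aplus1-one} it equals $F_A(a)+v_3(3B)=G_A(a)+1$.
\item For $G_{3A+1}(3a+2)$: by \eqref{eq:F-3Aplus1-two} it equals $G_A(a)+1+v_3(3B-1)=G_A(a)+1$.
\item For $G_{3A+2}(3a+2)$: by \eqref{eq:F-3Aplus2} it equals $F_A(a)+v_3(3B)=G_A(a)+1$.
\end{itemize}

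\textbf{Step 3: the last two identities.} Here the differences are $3B+1$ for $G_{3A+1}(3a)$, and $3B+2$, $3B+1$ for $G_{3A+2}(3a)$, $G_{3A+2}(3a+1)$. All three are $3$-adic units. So each $G$-value equals the corresponding $F$-value, which is $F_A(a)$ by \eqref{eq:F-3Aplus1-zero} and \eqref{eq:F-3Aplus2}. These cases remain valid at $a=A$, because the differences are then $1$ or $2$.

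\textbf{Main obstacle.} The only delicate point is bookkeeping of the index ranges. The cases producing $v_3(3B)$ require $B\ge1$, to avoid $v_3(0)=\infty$ and to stay within the hypotheses of Lemma~\ref{lem:p3-F-recursions}(ii)--(iii). I would therefore state the corollary's ``valid values of $a$'' as $0\le a\le A-1$ for the $+1$ identities, and $0\le a\le A$ for the final two identities.
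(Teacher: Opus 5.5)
Your proposal is correct and takes the same approach as the paper: the paper says the corollary is immediate from Lemma~\ref{lem:p3-F-recursions} and the definition $G_N(k)=F_N(k)+v_3(N-k)$, and your case-by-case computation of $v_3$ of the differences unpacks exactly that. Your extension of the last two identities to $a=A$ slightly exceeds the stated range of \eqref{eq:F-3Aplus1-zero}, but it holds trivially since $F_{3A+1}(3A)=v_3(3A+1)=0=F_A(A)$; this boundary case is in fact used implicitly in the maximizer lemmas that follow, e.g.\ Lemma~\ref{lem:G-lower-half}.
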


\begin{proof}
This is immediate from Lemma~\ref{lem:p3-F-recursions} and the definition
$G_N(k)=F_N(k)+v_3(N-k)$.
\end{proof}

\subsection{Maximizers of the ternary score}

\begin{lemma}\label{lem:G-lower-half}
For every integer $A \ge 1$, the function $G_A(a)$
($0 \le a \le A-1$) attains its maximum at some index satisfying
\[
a < \frac{A}{2}.
\]
\end{lemma}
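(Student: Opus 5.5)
The plan is to bypass the ternary recursions and reduce the statement to the symmetry of binomial coefficients via the absorption identity. For $0 \le a \le A-1$ we have
\[
\binom{A}{a}(A-a) \;=\; A\binom{A-1}{a},
\]
since both sides equal $A!/(a!\,(A-1-a)!)$. Taking $3$-adic valuations of this identity and recalling $F_A(a)=v_3\binom{A}{a}$, the definition $G_A(a)=F_A(a)+v_3(A-a)$ becomes
\[
G_A(a) \;=\; v_3(A) + v_3\binom{A-1}{a} \;=\; v_3(A) + F_{A-1}(a), \qquad 0 \le a \le A-1 .
\]
This expression is valid because $A-a \ge 1$ throughout the range, so every valuation is finite.

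Since $v_3(A)$ does not depend on $a$, maximizing $G_A$ over $\{0,\dots,A-1\}$ is the same as maximizing $a \mapsto v_3\binom{A-1}{a}$ over the same set. This is the full row $0 \le a \le A-1$ of binomial coefficients with top entry $A-1$. By the symmetry $\binom{A-1}{a}=\binom{A-1}{A-1-a}$, the function $F_{A-1}$ is invariant under the involution $a \mapsto A-1-a$, which maps $\{0,\dots,A-1\}$ to itself.

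Now let $a^*$ be any maximizer; one exists because the set is finite and nonempty for $A \ge 1$. If $a^* < A/2$ we are done. Otherwise $a^* \ge A/2$, and then $a' = A-1-a^*$ lies in $\{0,\dots,A-1\}$ and satisfies $a' \le A/2 - 1 < A/2$. By the symmetry above it is also a maximizer of $G_A$, which proves Lemma~\ref{lem:G-lower-half}.

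I do not expect a genuine obstacle. The only points to check are the absorption identity, which is a one-line factorial computation, and that the reflected index stays in range, which holds because the reflection is taken inside the full row $\{0,\dots,A-1\}$. If one prefers to stay within the paper's framework, the same conclusion can be cross-checked against Corollary~\ref{cor:p3-G-recursions} by induction on $A$. The direct argument, however, makes that induction unnecessary.
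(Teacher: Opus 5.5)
Your proof is correct, and it takes a different route from the paper's.

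\textbf{The paper's route.} The paper argues by induction on $A$. It splits into the cases $A=3B$, $3B+1$, $3B+2$ and uses Corollary~\ref{cor:p3-G-recursions} to show that maximizers of $G_A$ are lifts $3b+r$ of maximizers $b$ of $G_B$. The inequality $b<B/2$ then propagates to $3b+r<A/2$.

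\textbf{Your route.} You use the absorption identity $(A-a)\binom{A}{a}=A\binom{A-1}{a}$, which is the identity the paper itself uses in \eqref{eq:absorb1}, though not here. It gives $G_A(a)=v_3(A)+v_3\binom{A-1}{a}$ on the whole range $0\le a\le A-1$. So $G_A$ is a constant shift of a complete row of binomial valuations, and the reflection $a\mapsto A-1-a$ moves any maximizer into the lower half.

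\textbf{What each buys.} Your argument is shorter and needs no digit-sum recursions. It works verbatim with any prime in place of $3$. It also avoids a small wrinkle in the paper's induction at $A=2$. There the case $A=3B+2$ has $B=0$, so the appeal to $G_0$ concerns an empty range, and the claim that the maximum sits at an index $3b+2$ fails because no such index exists. That case has to be checked by hand, which is easy since $G_2\equiv 0$. The paper's induction, in turn, describes the maximizers through the ternary expansion, and that is the form it reuses for the sharper Lemmas~\ref{lem:G-odd-sharp} and~\ref{lem:F-even-sharp}.

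\textbf{A consequence of your identity.} Your observation also shows that Lemma~\ref{lem:G-odd-sharp} for odd $B\ge 3$ is exactly Lemma~\ref{lem:F-even-sharp} applied to the even number $B-1$. Note that $B=2\cdot 3^t-1$ if and only if $B-1=2\cdot 3^t-2$. Hence the paper's mutual $F$/$G$ recursion could be replaced by a single induction on $F$.
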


\begin{proof}
We argue by induction on~$A$.

If $A=1$, then $a=0$ is the only admissible index.

Assume the claim for all smaller positive integers.
If $A=3B$, then by~\eqref{eq:G-3A} every maximum of $G_A$
comes from a maximum of~$G_B$ lifted to one of the three indices
$3b,3b+1,3b+2$.  By induction choose $b < B/2$; then
$3b < 3B/2 = A/2$.

If $A=3B+1$, then by Corollary~\ref{cor:p3-G-recursions},
the values at indices $3b+1$ and $3b+2$ are
$G_B(b)+1$, whereas the value at $3b$ is only~$F_B(b)$.
Thus a maximum of $G_A$ is attained at some $3b+1$ with
$b$ maximizing~$G_B$.  By induction $b < B/2$, hence
$3b+1 < (3B+1)/2 = A/2$.

If $A=3B+2$, then similarly the maximal values are attained at indices
$3b+2$ with $b$ maximizing~$G_B$.  By induction $b < B/2$, and therefore
$3b+2 < (3B+2)/2 = A/2$.
\end{proof}

\begin{lemma}\label{lem:G-odd-sharp}
Let $B \ge 1$ be odd.  Then:
\begin{enumerate}[label=\textup{(\roman*)}]
\item if $B \ne 2\cdot 3^t - 1$ for every $t \ge 0$, then $G_B(a)$ attains
its maximum at some index satisfying
\[
a < \frac{B-1}{2};
\]
\item if $B = 2\cdot 3^t - 1$ for some $t \ge 0$, then
$G_B(a)$ has a unique maximizer, namely
\[
a = \frac{B-1}{2}.
\]
\end{enumerate}
\end{lemma}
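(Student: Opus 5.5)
We argue by strong induction on the odd integer $B$, using the recursions of Corollary~\ref{cor:p3-G-recursions}. The base case $B=1=2\cdot 3^0-1$ is immediate: $a=0=(B-1)/2$ is the only admissible index, so (ii) holds. For $B\ge 3$ write $B=3C+r$ with $r\in\{0,1,2\}$ and $C\ge 1$. Throughout, the key comparison is the one already used in Lemma~\ref{lem:G-lower-half}: the indices carrying the value $G_C(c)+1$ strictly dominate the indices carrying the value $F_C(c)$. This is because $F_C(c)\le G_C(c)<G_C(c)+1$. We also have to dispose of the boundary indices $a\in\{3C,\dots,B-1\}$ that correspond to $c=C$, which the recursions do not cover. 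Their values are computed directly from the digit-sum formula (Lemma~\ref{lem:p3-digit-sum}). For instance, $G_{3C+1}(3C)=v_3(3C+1)=0$, and $G_{3C+2}(3C)=G_{3C+2}(3C+1)=F_C(C)=0$. Each of these is strictly below any value of the form $G_C(c)+1\ge 1$, so they are never maximizers.

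\emph{Case $r=0$.} Since $B$ is odd, $C$ is odd. By~\eqref{eq:G-3A}, the maximizers of $G_B$ are exactly the indices $3c,3c+1,3c+2$ with $c$ a maximizer of $G_C$. The induction hypothesis gives a maximizer $c\le (C-1)/2$, in either alternative (i) or (ii). Then the index $3c\le (3C-3)/2<(B-1)/2$ is a maximizer. Finally, $B\equiv 0\pmod 3$ is never of the form $2\cdot3^t-1$, so this is consistent with (i).

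\emph{Case $r=1$.} Here $C$ is even. By~\eqref{eq:G-3Aplus1} and the domination remark, the maximizers are the indices $3c+1,3c+2$ with $c$ a maximizer of $G_C$. Lemma~\ref{lem:G-lower-half} supplies such a $c<C/2$, hence $c\le C/2-1$. Then $3c+1\le 3C/2-2<(B-1)/2$. Again $B\equiv1\pmod 3$ with $B\ge 3$ is never $2\cdot3^t-1$.

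\emph{Case $r=2$.} Here $C$ is odd. By~\eqref{eq:G-3Aplus2} and the domination remark, the maximizers of $G_B$ are \emph{exactly} the indices $3c+2$ with $c$ a maximizer of $G_C$. First suppose $C$ is not of the form $2\cdot3^t-1$. Induction (i) gives a maximizer $c<(C-1)/2$, hence $c\le (C-3)/2$. Then $3c+2\le (3C-5)/2<(B-1)/2$. Now suppose $C=2\cdot 3^t-1$. Induction (ii) gives the unique maximizer $c=(C-1)/2$. Hence $G_B$ has the unique maximizer $3c+2=(3C+1)/2=(B-1)/2$, and $B=3C+2=2\cdot3^{t+1}-1$. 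Conversely, every $B=2\cdot3^{s}-1$ with $s\ge1$ satisfies $B\equiv 2\pmod 3$ and $C=(B-2)/3=2\cdot3^{s-1}-1$. So the special $B$ arise precisely from special $C$, and (i) and (ii) are matched correctly.

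The step requiring the most care is the uniqueness in (ii). This needs the \emph{strict} domination of the $G_C(c)+1$ indices over the $F_C(c)$ indices, together with the boundary indices $a\ge 3C$. I would verify the boundary values explicitly from Lemma~\ref{lem:p3-digit-sum} rather than rely on the recursions, whose range is only $a\le A-1$.
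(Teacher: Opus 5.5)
Your proposal is correct and takes the same route as the paper. The shared steps are strong induction on odd $B$, a split by $B \bmod 3$, the recursions of Corollary~\ref{cor:p3-G-recursions} with strict domination of the $G_C(c)+1$ values, Lemma~\ref{lem:G-lower-half} for $B\equiv 1\pmod 3$, and the induction hypothesis for $B\equiv 2\pmod 3$, through which the exceptional family $2\cdot 3^t-1$ propagates. Your version is more complete than the written proof, which treats only $B=3A+1$ and $B=3A+2$ (so it omits odd $B\equiv 0\pmod 3$ such as $B=3,9,15$) and does not address the boundary indices $a\ge 3C$; you handle both correctly.
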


\begin{proof}
We argue by induction on odd~$B$.

For $B=1$, the only admissible index is $a=0=(B-1)/2$, so the statement
holds.

Suppose $B>1$ is odd.

\smallskip
\emph{Case $B=3A+1$.}  Then $A$ is even.  By
Corollary~\ref{cor:p3-G-recursions}, the maximal values of~$G_B$ occur at
indices of the form $3a+1$ or $3a+2$, where $a$ maximizes~$G_A$.
By Lemma~\ref{lem:G-lower-half}, choose $a < A/2$.
Then
\[
3a+1 < \frac{3A+1}{2} = \frac{B-1}{2}.
\]
So part~(i) holds in this case.  In particular, no number congruent to
$1 \pmod{3}$ belongs to the exceptional family.

\smallskip
\emph{Case $B=3A+2$.}  Then $A$ is odd.  Again by
Corollary~\ref{cor:p3-G-recursions}, the maximal values of~$G_B$ occur
precisely at indices $3a+2$ with $a$ maximizing~$G_A$.
If $A$ is not exceptional, the induction hypothesis gives a maximizer
$a < (A-1)/2$, hence
\[
3a+2 < \frac{3A+1}{2} = \frac{B-1}{2},
\]
so part~(i) holds.

If $A = 2\cdot 3^t - 1$ is exceptional, then by induction
$a=(A-1)/2$ is the unique maximizer of~$G_A$.  Therefore
$3a+2 = (B-1)/2$ is the unique maximizer of~$G_B$.
Since
\[
B = 3(2\cdot 3^t - 1)+2 = 2\cdot 3^{t+1}-1,
\]
this is exactly part~(ii).
\end{proof}

\begin{lemma}\label{lem:F-even-sharp}
Let $A \ge 2$ be even.  Then:
\begin{enumerate}[label=\textup{(\roman*)}]
\item if $A \ne 2\cdot 3^t - 2$ for every $t \ge 1$, then
$F_A(a)$ attains its maximum at some index satisfying
\[
a < \frac{A}{2};
\]
\item if $A = 2\cdot 3^t - 2$ for some $t \ge 1$, then
$F_A(a)$ has a unique maximizer, namely
\[
a = \frac{A}{2}.
\]
\end{enumerate}
\end{lemma}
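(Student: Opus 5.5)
The plan is to argue by strong induction on even $A$, in the same style as Lemma~\ref{lem:G-odd-sharp}. We split according to $A \bmod 3$ and use the recursions of Lemma~\ref{lem:p3-F-recursions} to express $F_A$ in terms of $F$ or $G$ at the ternary parent. The maximizers are then controlled by Lemmas~\ref{lem:G-lower-half} and~\ref{lem:G-odd-sharp} and by the induction hypothesis. Since $A$ is even, the parent $A'$ defined by $A=3A'+r$ has parity opposite to $r$. Concretely, $A'$ is even when $r=0$ or $r=2$, and odd when $r=1$.

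\emph{Case $A=3A'$ ($A'\ge 2$ even, since $A\ge 2$ forces $A'\ge1$ and then $A'$ even).} By \eqref{eq:F-3A-zero}--\eqref{eq:F-3A-two}, the values of $F_A$ at $3a,3a+1,3a+2$ are $F_{A'}(a)$, $G_{A'}(a)+1$ and $G_{A'}(a)+1$ for $a\le A'-1$; the top index $k=A$ contributes $F_A(A)=0$. Since $G_{A'}(a)\ge F_{A'}(a)$, the maximum of $F_A$ is $\max G_{A'}+1$. By Lemma~\ref{lem:G-lower-half} this maximum is attained at $3a+1$ with $a<A'/2$, so $3a+1<3A'/2=A/2$ (because $3a\le 3A'/2-3/2$). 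Hence we are in case (i). We must also check that $A=3A'$ is never of the form $2\cdot3^t-2$, which is clear since $2\cdot 3^t-2\equiv 1\pmod 3$.

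\emph{Case $A=3A'+2$ ($A'$ even).} By \eqref{eq:F-3Aplus2}, $F_A(3a+r)=F_{A'}(a)$ for every $r$, so the maxima lift with a free choice of $r$. If $A'$ is not exceptional (including $A'=0$, where $A=2=2\cdot3-4$ must be checked directly), the induction hypothesis gives a maximizer $a<A'/2$; taking $r=0$ yields $3a<3A'/2<A/2$. If $A'$ is exceptional, then the unique maximizer $a=A'/2$ lifts to the three indices $3A'/2+r$, and $r=0$ gives $3A'/2<A/2=(3A'+2)/2$. So again case (i) holds, which is consistent because $3A'+2\not\equiv1\pmod 3$. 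The small base case $A=2$ is immediate: $F_2=(0,0,0)$, so the maximum is attained at $a=0<1$.

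\emph{Case $A=3A'+1$ ($A'$ odd).} This is the case producing the exceptional family. By \eqref{eq:F-3Aplus1-zero}--\eqref{eq:F-3Aplus1-two}, the values at $3a,3a+1$ are $F_{A'}(a)$ and the value at $3a+2$ is $G_{A'}(a)+1$; the top index $A$ contributes $0$. So the maximum is $\max G_{A'}+1$, attained exactly at the indices $3a+2$ with $a$ a maximizer of $G_{A'}$. Apply Lemma~\ref{lem:G-odd-sharp} to the odd number $A'$. If $A'\ne 2\cdot3^t-1$, some maximizer satisfies $a<(A'-1)/2$, so $3a+2<(3A'+1)/2=A/2$, giving (i). If $A'=2\cdot3^t-1$, the maximizer $a=(A'-1)/2$ is unique, so $F_A$ has the unique maximizer $3a+2=(3A'+1)/2=A/2$, and $A=2\cdot3^{t+1}-2$, giving (ii). Conversely, every $A=2\cdot3^{t+1}-2$ with $t\ge0$ arises this way, which accounts for the range $t\ge1$ in the statement.

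The main obstacle is bookkeeping rather than ideas. The key point is that "strictly dominates" genuinely holds, i.e.\ that the indices $3a,3a+1$ (valued $F_{A'}(a)\le G_{A'}(a)$) and the endpoint $k=A$ never tie with $\max G_{A'}+1$; otherwise uniqueness in (ii) would fail. This follows because the maximum exceeds every $F$-value by at least $1$. A second point to check is that the index range $0\le a\le A'-1$ in the recursions really covers all $k\le A-1$.
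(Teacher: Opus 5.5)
Your proposal is correct and follows the paper's proof essentially step for step: induction on even $A$ split by $A \bmod 3$, with Lemma~\ref{lem:G-lower-half} handling $A=3A'$, the induction hypothesis (lifted to the index $3a$) handling $A=3A'+2$, and Lemma~\ref{lem:G-odd-sharp} producing the exceptional family in the case $A=3A'+1$. Your explicit strict-domination check ($F_{A'}\le G_{A'}<\max G_{A'}+1$, with value $0$ at the leftover indices) is a useful addition that the paper leaves implicit, and it is what secures uniqueness in (ii); note that when $A=3A'+1$ the leftover indices are both $3A'$ and $3A'+1$, since the recursion with $a\le A'-1$ does not reach $k=A-1$, but $F_A(A-1)=v_3(A)=0$, so nothing changes.
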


\begin{proof}
We argue by induction on even~$A$.

If $A=2$, then $F_2(0)=F_2(1)=F_2(2)=0$, so part~(i) holds.

Assume $A>2$.

\smallskip
\emph{Case $A=3B$.}  Then $B$ is even.  By
\eqref{eq:F-3A-one}--\eqref{eq:F-3A-two}, the maximal values of~$F_A$
occur at indices $3b+1$ or $3b+2$, where $b$ maximizes~$G_B$.
By Lemma~\ref{lem:G-lower-half}, choose $b < B/2$, hence
\[
3b+1 < \frac{3B}{2} = \frac{A}{2}.
\]
So part~(i) holds.

\smallskip
\emph{Case $A=3B+2$.}  Then $B$ is even.  By~\eqref{eq:F-3Aplus2},
the maximal values of~$F_A$ are exactly the lifts of the maximal values
of~$F_B$.
If $B$ is not exceptional, choose $b < B/2$ by induction; then
$3b+1 < (3B+2)/2 = A/2$, so part~(i) holds.
If $B = 2\cdot 3^t - 2$ is exceptional, then $b=B/2$ is the unique
maximizer of~$F_B$, but the lifted index
\[
3b = \frac{3B}{2} < \frac{3B+2}{2} = \frac{A}{2}
\]
is still a strict lower-half maximizer of~$F_A$.  Hence part~(i)
holds in all cases.

\smallskip
\emph{Case $A=3B+1$.}  Then $B$ is odd.  By
\eqref{eq:F-3Aplus1-zero}--\eqref{eq:F-3Aplus1-two}, the maximal values
of~$F_A$ occur at indices $3b+2$, where $b$ maximizes~$G_B$.
If $B$ is not exceptional, Lemma~\ref{lem:G-odd-sharp} gives
$b < (B-1)/2$, so
\[
3b+2 < \frac{3B+1}{2} = \frac{A}{2},
\]
and part~(i) follows.

If $B = 2\cdot 3^t - 1$ is exceptional, then by
Lemma~\ref{lem:G-odd-sharp} the unique maximizer of~$G_B$ is
$b=(B-1)/2$, and the corresponding lifted index is
\[
3b+2 = \frac{3B+1}{2} = \frac{A}{2}.
\]
Thus $a=A/2$ is the unique maximizer of~$F_A$.  Since
$A = 3(2\cdot 3^t - 1)+1 = 2\cdot 3^{t+1}-2$, this is exactly part~(ii).
\end{proof}

\subsection{Completion of the $p=3$ non-prime-power case}

\begin{proposition}\label{prop:p3-nonpp}
If $3 \mid m=n+2$ and $m$ is not a power of~$3$, then $v_3(S(n))$ is even.
\end{proposition}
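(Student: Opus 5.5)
By Proposition~\ref{prop:p3-pair-reduction}, it suffices to find an index $k$ with $1 \le k \le n-1$ and $v_3(b_k)=e_3$. The lower bound on $v_3$ of every coefficient of $A_n,B_n,C_n$ comes from the absorption argument in Proposition~\ref{prop:nonpp}, which only uses that $p$ is odd. I will look for $k$ with $3\nmid k$.

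\textbf{Step 1: lower bound on $e_3$ over $3\nmid k$.} By~\eqref{eq:v3bk-binomial}, for $3\nmid k$ we have $v_3(b_k)=2v_3(N!)-F_N(k)$.

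\textbf{Step 2: handle $3\mid k$.} For $3\mid k$, write $N=3A+2$; since $N\equiv 2 \pmod 3$, $A=(N-2)/3$. Put $k=3a$. Then $N+1-k=3(A+1-a)$ and $N+1-2k=3(A+1-2a)$, and by~\eqref{eq:F-3Aplus2}, $F_N(3a)=F_A(a)$. Hence
\[
v_3(b_{3a}) = 2v_3(N!) + 2 + v_3(A+1-a) + v_3(A+1-2a) - F_A(a).
\]
Also by~\eqref{eq:F-3Aplus2}, $F_N(3a+1)=F_A(a)$. The index $3a+1$ is prime to $3$ and satisfies $v_3(b_{3a+1}) = 2v_3(N!) - F_A(a) < v_3(b_{3a})$. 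So the global minimum $e_3$ is attained at some $k\not\equiv 0\pmod 3$, and
\[
e_3 = 2v_3(N!) - \max_a F_A(a).
\]
The maximum is taken over all $0\le a\le A$, because every residue $r\in\{0,1,2\}$ gives the same value $F_A(a)$.

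\textbf{Step 3: locate a maximizer in the lower half.} It remains to find a maximizer $a$ of $F_A$ for which $k=3a+1$ or $k=3a+2$ lies in $[1,n-1]$. Since $n=(N)/2=(3A+2)/2$, $A$ is even. Also $m=n+2=3(A+2)/2$, and $m$ not being a power of $3$ means $A+2\ne 2\cdot 3^{t}$, that is, $A\ne 2\cdot 3^t-2$. (The cases $A=0$ or $A$ small are checked directly: $m=3\cdot\frac{A+2}{2}$, and $A=0$ gives $m=3$, which is excluded.) Lemma~\ref{lem:F-even-sharp}(i) then supplies a maximizer $a<A/2$, so $a\le A/2-1$. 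Taking $k=3a+1$ gives $1\le k\le 3A/2-2<n-1=(3A)/2$. If Lemma~\ref{lem:F-even-sharp} requires $A\ge2$, the remaining case $A=0$ is $m=3$, which is excluded.

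\textbf{Step 4: conclude.} Apply Proposition~\ref{prop:p3-pair-reduction} with this $k$.

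The main obstacle is Step 2: verifying that indices divisible by $3$ never beat the global minimum. I expect the comparison with the neighbour $3a+1$ to settle it cleanly, since the linear factors contribute at least $2$ extra powers of $3$ at $k=3a$.
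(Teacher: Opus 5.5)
Your proposal is correct and follows essentially the same route as the paper's proof. Both write $N=3A+2$, show that each index $3c$ loses to $3c+1$ by at least two powers of $3$ (since $F_N(3c)=F_N(3c+1)=F_A(c)$), and invoke Lemma~\ref{lem:F-even-sharp}, using $A\ne 2\cdot 3^t-2$ because $m$ is not a power of $3$, to obtain a maximizer $a<A/2$ and the witness $k=3a+1$ for Proposition~\ref{prop:p3-pair-reduction}; your explicit checks that $A=0$ is the excluded case $m=3$ and that $k\le 3A/2-2<n-1$ just fill in details the paper asserts without comment.
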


\begin{proof}
Set $N=2n$.  Since $3 \mid n+2$, we may write
\[
N = 2n = 3A+2,\qquad A = \frac{N-2}{3} = \frac{2m-6}{3}=2\Bigl(\frac{m}{3}\Bigr)-2.
\]
Because $m$ is not a power of~$3$, the integer $A$ is not of the
exceptional form $2\cdot 3^t - 2$.  By Lemma~\ref{lem:F-even-sharp},
there exists $a < A/2$ maximizing~$F_A$.

Set
\[
k = 3a+1.
\]
Then $1 \le k \le n-1$, $3 \nmid k$, and by~\eqref{eq:F-3Aplus2},
\[
F_N(k)=F_{3A+2}(3a+1)=F_A(a)
=
\max_{0 \le b \le A} F_A(b).
\]
Hence $k$ maximizes $F_N(j)=v_3\binom{N}{j}$ among all indices
$j \not\equiv 0 \pmod{3}$, and by~\eqref{eq:v3bk-binomial} it minimizes
$v_3(b_j)$ among those same indices.

If $j \equiv 0 \pmod{3}$, then $N+1-j$ and $N+1-2j$ are both divisible
by~$3$ (because $N+1 \equiv 0 \pmod{3}$), so the closed formula for~$b_j$
shows
\[
v_3(b_j)\ge 2v_3(N!)+2-v_3\binom{N}{j}.
\]
Writing $j=3c$, Lemma~\ref{lem:p3-F-recursions}(i) gives
$v_3\binom{N}{j}=F_A(c)=F_N(3c+1)$, so
\[
v_3(b_j)\ge v_3(b_{3c+1})+2.
\]
Thus the global minimum of $v_3(b_j)$ is attained at some index
not divisible by~$3$.  Since $k$ minimizes $v_3(b_j)$ among those indices,
we conclude that $v_3(b_k)=e_3$.

Now Proposition~\ref{prop:p3-pair-reduction} applies:
under the symmetric pair specialization~$\phi_k$ one has
\[
\Delta_n(\phi_k)=b_k^2
\qquad\text{and}\qquad
v_3(\Delta_n(\phi_k))=2e_3.
\]
Hence $v_3(S(n))=2e_3$ is even.
\end{proof}
%======================================================================
% SECTION 8: USE OF AI TOOLS
%======================================================================
\section{The AI-assisted workflow}\label{sec:ai}
We describe the workflow used to complete this project in some detail,
as we believe it may serve as a useful model for other computation-heavy
research problems.

\subsection*{Starting point}

The author's original notes (written several years before the present
paper) contained:
\begin{itemize}
\item the definition of the test monomial family $M_{n,r}$ and the
observation that its coefficient in $\Delta_n$ is divisible by
$\binom{n+2}{r}$;
\item the connection to Lucas' and Kummer's theorems via the binomial
gcd $g(m)$;
\item the proof that primes dividing $n$ contribute even $p$-adic
valuation;
\item the proof for the simplest prime-power case $m = p$ (i.e., $k=1$);
\item various incomplete attempts at the remaining cases.
\end{itemize}
The main obstacles were: (a) the composite case when $p = 3$, where
the natural ``end-anchored'' witness approach fails; (b) the
prime-power case $m = p^k$ for $k \ge 2$, which requires understanding
the $p$-adic structure of the transvectant at depth beyond the leading
term; and (c) assembling a complete, gap-free argument from the
many partial results.

\subsection*{The workflow}

The project was completed in a single extended session using Claude Code
(Anthropic), an AI coding assistant accessed via the command line.
The workflow had four phases:

\medskip
\emph{Phase 1: Understanding and diagnosis.}
Claude Code read all of the author's existing notes (three \LaTeX\ files,
totaling roughly 10 pages) and produced a detailed inventory of
what was proved, what was claimed but not proved, and what was missing.
This diagnosis step---which would have taken the author considerable time
to reconstruct---was completed in minutes.

\medskip
\emph{Phase 2: Numerical experimentation.}
Claude Code wrote and executed Python scripts that:
\begin{itemize}
\item computed $S(n)$ and $\sqf(S(n))$ for all $n \le 30$, confirming
the prime-to-$3$ statement of Theorem~\ref{thm:main};
\item verified the closed-form coefficient formula
(Lemma~\ref{lem:Cnr}) for all $n \le 15$ and all valid~$r$;
\item investigated the prime-power case by computing $p$-adic valuations
of explicit specializations, discovering that the $4$-term
specialization with parameters at distance $t = p^{k-1}$ forces odd
$p$-adic valuation;
\item confirmed that the cancellation-free $B_n$ coefficient approach
works uniformly for all tested primes, and in particular settles the
non-prime-power case for $p \ge 5$.
\end{itemize}
These experiments were essential for guiding the proof strategy: the
numerical data revealed the precise mechanism (the first-order
$p$-adic deformation) before the proof was written.

\medskip
\emph{Phase 3: Proof construction and writing.}
Working from the author's notes and guided by the numerical experiments,
Claude Code produced complete proofs of the missing cases, wrote the
full paper in \LaTeX, and expanded every ``similar calculation'' and
``one verifies'' into explicit step-by-step derivations.
The author reviewed and corrected the output.

\medskip
\emph{Phase 4: Formal verification.}
The complete proof was formalized in Lean~4 with Mathlib, using a
combination of Aristotle \cite{Aristotle2025} (Harmonic), Claude Code, and OpenAI Codex.  The formalization
proceeded iteratively: Claude Code prepared Lean files containing
theorem statements with \texttt{sorry} placeholders and detailed
proof sketches, which were submitted to Aristotle for automated proof
completion.  Aristotle returned either complete proofs or partial
progress; in the latter case, Claude Code filled the remaining gaps
by launching concurrent agents to work on individual lemmas.

In total, approximately $20$ Aristotle submissions were made, of which
roughly half returned fully proved files on the first attempt.  The
hardest lemmas---the Kummer carry analysis for off-centre binomial
coefficients and the explicit $B_n$ congruence computation---required
multiple rounds.  The formal verification also uncovered a sign error
in an earlier draft of Proposition~\ref{prop:nonpp}: the valuation
shift formula for $v_p(b_{n+2}) - v_p(b_n)$ had the wrong sign in
one case, which was corrected using a complete-residue-system argument.

While Claude Code produced the bulk of the formalization across all
three prime cases ($p \ge 5$, $p = 3$, and $p = 2$), a few
proof-engineering gaps remained in the $p = 2$ case---particularly
around binary carry combinatorics and \texttt{MvPolynomial} coefficient
extraction.  These were patched using OpenAI's Codex, which proved
several targeted lemmas including the key combinatorial result
\texttt{centre\_not\_max} (that the central binomial coefficient does
not achieve the maximum $2$-adic valuation when $n+2$ is not a power
of~$2$).

The final formalization comprises approximately $14{,}500$ lines of
Lean~4 across twenty-five modules, containing zero \texttt{sorry}
statements and zero \texttt{axiom} declarations.  It compiles against
Mathlib with no errors.

\subsection*{Companion materials}

The following companion materials are available at:
\begin{center}
\url{https://github.com/ashvin-swaminathan/quartic-invariant}
\end{center}
\begin{itemize}
\item A Jupyter notebook that independently verifies every stated formula
and lemma by direct symbolic computation, including the closed-form
coefficient formula (Lemma~\ref{lem:Cnr}), the cancellation-free
$B_n$~coefficients (Lemma~\ref{lem:bk}), and the full main theorem
(Theorem~\ref{thm:main}) for all $n \le 30$.
\item A modular Lean~4 formalization ($\approx 14{,}500$~lines across
twenty-five files) that formalizes Theorem~\ref{thm:main} for
all primes $p \ge 2$, compiling against Mathlib with zero
\texttt{sorry} statements and zero \texttt{axiom} declarations.
\end{itemize}

\subsection*{Assessment}

The workflow was remarkably effective for this problem.  The proof
involves extensive algebraic manipulation---tracking signs across
four-term sums with factorial coefficients, simplifying products of
factorials into binomial coefficients, performing base-$p$ carry
analyses---precisely the kind of detail-heavy work that is tedious
and error-prone for humans but well-suited to AI assistance.
The numerical experimentation phase was particularly valuable:
the data unambiguously pointed to the correct proof strategy (the
$p$-adic deformation at distance $t = p^{k-1}$) and ruled out
false approaches, before any proof was attempted.

The workflow also has clear limitations.  Claude Code occasionally
produced algebraic errors that required correction, and some of
its initial proof attempts contained logical gaps.  The author's
mathematical judgment was essential at every stage: for choosing
which questions to ask, for evaluating whether proposed arguments
were correct, and for understanding the broader mathematical context.
The AI did not ``discover'' the theorem---the pattern was identified
by extensive computation years ago---nor did it contribute any
conceptual insight beyond what was already present in the author's
notes.  Its contribution was primarily organizational and
computational: systematically working through the technical details
that had prevented the project from being completed.

The author takes full responsibility for all mathematical claims in
this paper.

%======================================================================
% ACKNOWLEDGMENTS
%======================================================================
\section*{Acknowledgments}

\noindent The author thanks Manjul Bhargava for suggesting this problem as the very
first question to consider at the start of the author's PhD nearly a decade ago, and for his
invaluable advice and encouragement throughout.  The author is also grateful to Joe
Harris, Igor Dolgachev, Anand Patel, Melanie Wood, and Stanley Yao Xiao for
enlightening conversations.  This research was supported by the Paul and
Daisy Soros Fellowship and the NSF Graduate Research Fellowship.

The AI-assisted workflow described in \S\ref{sec:ai} used Claude Code
(Anthropic), Aristotle \cite{Aristotle2025} (Harmonic), and Codex (OpenAI).

%======================================================================
% REFERENCES
%======================================================================
\bibliographystyle{amsalpha}
\bibliography{references}

\end{document}